\newcommand{\im}{{\rm im}\:}
\newcommand{\Q}{{\mathbb Q}}
\newtheorem{thm}{Theorem}[section]
\newtheorem{lemma}[thm]{Lemma}
\newtheorem{prop}[thm]{Proposition}
\newtheorem{cor}[thm]{Corollary}
\theoremstyle{definition}
\newtheorem{definition}[thm]{Definition}
\newtheorem{example}[thm]{Example}
\newtheorem{remark}[thm]{Remark}
\newcommand{\coker}{\text{coker }}
\newcommand{\QQ}{\mathbb{Q}}
\newcommand{\AF}{\mathbb{A}}
\newcommand{\II}{\mathbb{I}}
\newcommand{\ZZ}{\mathbb{Z}}
\newcommand{\GG}{\mathbb{G}}
\newcommand{\mc}[1]{\mathcal{#1}}
\begin{document}

\title[On almost strong approximation in reductive algebraic groups]{On almost strong approximation in reductive algebraic groups}

\author[A.S.~Rapinchuk]{Andrei S. Rapinchuk}

\author[W.~Tralle]{Wojciech Tralle}

\address{Department of Mathematics, University of Virginia,
Charlottesville, VA 22904-4137, USA}

\email{asr3x@virginia.edu}

\address{Department of Mathematics, University of Tampa,
Tampa, FL 33606, USA}

\email{voy5trll@gmail.com}

\begin{abstract}
We investigate a slight weakening of the classical property of strong approximation, which we call {\it almost strong approximation}, for connected reductive algebraic groups over global fields with respect to special sets of valuations. While nonsimply connected groups (in particular, all algebraic tori) always fail to have strong approximation -- and even almost strong approximation -- with respect to any finite set of valuations, we show that under appropriate assumptions they do have almost strong approximation with respect to certain infinite sets of valuations that can be characterized in terms of Dirichlet density and include {\it tractable} sets of valuations, i.e. those sets that contain all archimedean valuations and a generalized arithmetic progression minus a set of Dirichlet density zero. Almost strong approximation is likely to have a variety of applications, and as an example we use almost strong approximation in tori with respect to tractable sets to extend the essential part of the result of Radhika and Raghunathan  (cf. \cite[Theorem 5.1]{RadRag}) on the congruence subgroup problem for inner forms of type $\textsf{A}_n$ to all absolutely almost simple simply connected groups.  
\end{abstract}

\maketitle

\section{Introduction}\label{Introduction}

The goal of this paper is to develop new results on strong approximation for reductive algebraic groups over global fields and apply them to the congruence subgroup problem. While strong approximation in semi-simple {\it simply connected} groups has been one of the main tools in the arithmetic theory of algebraic groups since its inception, here we establish (under appropriate assumptions) a slightly weaker property that we term {\it almost strong approximation} for (connected reductive) {\it nonsimply connected groups} where strong approximation in the classical situation never holds. 

More precisely, let $G$ be a connected linear algebraic group 
defined over a global field $K$. Given a~nonempty subset $S$ of the set $V^K$ of all inequivalent valuations of $K$, we let $\mathbb{A}_{K,S}$ denote the {\it ring of $S$-adeles} of $K$, i.e. the ring of truncated adeles without the components corresponding to valuations $v \in S$, or more explicitly, the restricted topological product of the completions $K_v$ for $v \in V^K \setminus S$ with respect to the valuation rings $\mathcal{O}_v \subset K_v$ for nonarchimedean $v \in V^K \setminus S$.   Then the {\it group of $S$-adeles} of $G$ can be identified with the group of points $G(\mathbb{A}_{K,S})$, equipped with the {\it $S$-adelic topology} (see \cite[\S 5.1]{PlRa} or \cite{Ra-SA} for basic definitions).  The group of $K$-rational points $G(K)$ admits a diagonal embedding $G(K) \hookrightarrow G(\mathbb{A}_{K,S})$, and is typically identified with the image of this embedding  called the {\it group of principal adeles}. One says that $G$ has {\it strong approximation} with respect to $S$ if the diagonal embedding is dense, in other words, $\overline{G(K)}^{(S)} = G(\mathbb{A}_{K,S})$ where $\bar{\ }\bar{\ }^{(S)}$ denotes the closure in the $S$-adelic topology\footnote{For the context, we recall that the diagonal embedding $K \hookrightarrow \mathbb{A}_{K,S}$ has discrete image if $S = \varnothing$, and dense image for any nonempty $S$ (cf. \cite[Ch. II]{ANT}). Thus, no nontrivial linear algebraic group $G$ can have strong approximation for $S$ empty -- which is the reason why we always assume that $S \neq \varnothing$ in this paper,  while the additive group $G = \mathbb{G}_a$ does have strong approximation for every nonempty $S$.}. Strong approximation in algebraic groups has been studied extensively since 1930s in the works of M.~Eichler, M.~Kneser, and others. For {\it finite} $S$, a criterion for strong approximation in reductive groups was obtained first by V.P.~Platonov \cite{Pl-SA} in characteristic zero and later by G.A.~Margulis \cite{M-SA} and G.~Prasad \cite{Pr-SA} in positive characteristic. The ``only if'' part of this criterion implies that $G$ never has strong approximation for any finite $S$ unless it is simply connected, and in fact in the nonsimply connected case the index $[G(\mathbb{A}_{K,S}) : \overline{G(K)}^{(S)}]$ is always infinite (cf. \cite[Prop. 7.13]{PlRa}).  This applies, in particular, to any nontrivial algebraic $K$-torus $T$, where one can actually show that the quotient $T(\mathbb{A}_{K,S})/\overline{T(K)}^{(S)}$ is a group of infinite exponent for any finite $S$ (cf. \cite[Proposition 2.1]{Ra-SA}). 

On the other hand, it was shown in \cite[Theorem 5.3]{PrRa} (see also \cite[Theorem 3]{PrRa-Irr}) that the exponent of the quotient $T(\mathbb{A}_{K,S})/\overline{T(K)}^{(S)}$ becomes finite if $S$ contains the set $V^K_{\infty}$ of archimedean valuations  as well as  {\it all but finitely many} valuations in a generalized arithmetic progression (see the definition below) that satisfies one technical condition (which cannot be omitted). This result was applied in \cite[\S 5]{PrRa} to the Congruence Subgroup Problem (CSP). The starting point for the current paper was the project to obtain similar results for more general sets of valuations we refer to as {\it tractable} that were suggested by the work of Radhika and Raghunathan  \cite{RadRag} and extend the result on CSP in {\it loc. cit.} to all types of absolutely almost simple simply connected algebraic groups. This project was completed in early 2025 and the results were posted on arXiv \cite{RaTr}. Recently, Y.~Cao and Y.~Wang \cite{CW} observed that our results on strong approximation can be extended to sets that can be characterized in terms of Dirichlet density. The proofs in \cite{CW} rely on the results of Demarche \cite{Dem1}, \cite{Dem2} and use Brauer-Manin obstruction. We will show that improvements proposed in \cite{CW} can be achieved by one relatively minor change in the original preprint \cite{RaTr}: one just needs to replace Proposition 3.1 in \cite{RaTr} (which is reproduced unaltered in \S \ref{ASAquasisplit} below) with Proposition \ref{P:1000} inspired by \cite{CW}, without any changes in the rest of the arguments -- we note that our approach requires a more modest input limited basically to the Nakayama-Tate theorem. To make our account more comprehensive, in the formulations of the main results we include the more general statements in terms of Dirichlet density as well as the initial more concrete statements for tractable sets that will actually be used in applications to CSP in \S \ref{CSPapplication}.

The results of the paper assert that under appropriate assumptions the index $[G(\mathbb{A}_{K,S}) : \overline{G(K)}^{(S)}]$ is finite. Simple examples (cf. Example \ref{exampleASAindex}) demonstrate that while finite, this index is in general $> 1$ even when $G$ is a split torus over $K = \mathbb{Q}$ and $S$ consists of the archimedean valuation and all primes in an arithmetic progression, and thus $G$ fails to have strong approximation in the classical sense. This suggests the following new notion: we say that an algebraic $K$-group $G$ has {\it almost strong approximation}\footnote{As the referee pointed out, the term ``almost strong approximation" was already used in \cite{Hsia1} and \cite{Hsia2} to describe a completely different property. Indeed, the focus in \cite{Hsia1} and \cite{Hsia2} was on analyzing the weakening of strong approximation for $G$ with respect to $S$ when the group $G_S = \prod_{v \in S} G(K_v)$ is compact, violating thereby one of the necessary conditions for strong approximation. In the current paper, we analyze the weakening of strong approximation when the other necessary condition, viz. simply connectedness, fails. So, the use of the same terminology seems appropriate and should not lead to a confusion, although the property we investigate can also be adequately described by a different term like ``virtual strong approximation." } with respect to a subset $S \subset V^K$ if the index $[G(\mathbb{A}_{K,S}) : \overline{G(K)}^{(S)}]$ is finite -- see Definition \ref{D:ASA} and the discussion that follows for relations between almost strong approximation and honest/classical strong approximation. In order to give precise statements, we need the following definitions. 

\vskip1mm

%
%
 
\begin{definition}
Let $L/K$ be a finite Galois extension and let $\mc{C}$ be a conjugacy class in the Galois group $\mathrm{Gal}(L/K)$. 
A {\it generalized arithmetic progression} $\mathcal{P}(L/K , \mathcal{C})$ is the set of all $v \in V^K_f := V^K \setminus V^K_{\infty}$ such that $v$ is unramified in $L$ and for some (equivalently, any) extension $w | v$ the corresponding Frobenius automorphism $\mathrm{Fr}_{L/K}(w | v)$ lies in $\mathcal{C}$.
\end{definition}

In this paper, we will not differentiate between (finite) primes of a global field and the corresponding nonarchimedean valuations. Under this convention, the generalized arithmetic progression $\mathcal{P}(L/\mathbb{Q} , \mathcal{C}_a)$, where $L = \Q(\zeta_m)$ is the $m$th cyclotomic extension of $\mathbb{Q}$ and $\mathcal{C}_a$ with $(a , m) = 1$ consists 
of the automorphism $\sigma_a \in \mathrm{Gal}(L/K)$ defined by $\sigma_a(\zeta_m) = \zeta_m^a$ coincides with the set of rational primes $p$ satisfying $p \equiv a(\mathrm{mod}\: m)$
(cf., for example, \cite[Ch. VII, \S 3, Example 3.4]{ANT}), hence the terminology.

We refer the reader to \cite[Ch. VII, \S 13]{Neu} or \cite[Ch. IV, \S 4]{Jan} (see also \S  \ref{ASAquasisplit} below) for the notion of Dirichlet density $\mathfrak{d}_K(\mathcal{P})$
of any set $\mathcal{P}$ of primes of $K$ and its basic properties. Here we only recall that finite sets of primes have Dirichlet density zero, however it is easy to construct infinite sets of primes with density zero for any $K$, and in fact for a given nontrivial finite extension $K/\mathbb{Q}$ the set of all primes of $K$ that have relative degree $> 1$ over $\mathbb{Q}$ has density zero (cf. \cite[4.7.1]{Jan}). 

\begin{definition}
A subset $S \subset V^K$ is {\it tractable} if it contains a set of the form $V^K_{\infty} \cup (\mathcal{P}(L/K , \mathcal{C}) \setminus \mathcal{P}_0)$ for some generalized arithmetic progression $\mathcal{P}(L/K , \mathcal{C})$ and a subset $\mathcal{P}_0$ with $\mathfrak{d}_K(\mathcal{P}_0) = 0$. 
\end{definition}

Finally, for a finite (separable) extension $F/K$ of global fields, we let $\mathrm{Spl}(F/K)$ denote the set of nonarchimedean valuations $v$ of $K$ that split completely in $F$ (cf. \cite[p. 49]{Neu}).  

Here is our main result on almost strong approximation in  connected reductive groups. Before formulating it, we recall that the (Galois) twists of a split semi-simple group by (Galois) cocycles with values in the group of {\it inner} automorphisms of the latter are called {\it inner forms}. Then for any semi-simple group there exists the minimal extension of the base field (which is automatically a finite Galois extension) over which the group becomes an inner form of a split group. Furthermore, we assume that all relevant finite extensions of the base field are 
contained in its fixed algebraic closure. 
\begin{thm}\label{T:A}
For a connected reductive algebraic group $G$ defined over a number field $K$, we  let $T = Z(G)^{\circ}$ (resp., $H = [G , G]$) denote the maximal central torus (resp., the maximal semi-simple subgroup) so that $G = TH$ is an almost direct product. Set $E=PM$, where $P/K$ is the minimal splitting field of $T$ and $M/K$ is the minimal Galois extension over which $H$ becomes an inner form of a $K$-split group. If $S \subset V^K$ contains $V^K_{\infty}$ and satisfies $\mathfrak{d}_K(S \cap \mathrm{Spl}(E/K)) > 0$ then the closure $\overline{G(K)}^{(S)}$ is a finite index normal subgroup of $G(\AF_{K,S})$ with abelian quotient, and thus $G$ has almost strong approximation with respect to $S$. 

Furthermore, if $S$ is a tractable set of valuations containing a set of the form $V^K_{\infty} \cup (\mc{P}(L/K,\mc{C})\setminus\mc{P}_0)$ such that for some $\sigma \in \mathcal{C}$, we have 
\begin{equation}\label{technicalconditionreductive}
\sigma|(E\cap L)=\mathrm{id}_{E\cap L},
\end{equation}
the order of the abelian group  $G(\AF_{K,S})/\overline{G(K)}^{(S)}$ divides a constant $C(\ell, n, r)$ that depends only on $\ell = $ rank of $G$, $n=[L:K]$, and $r = $ number of real valuations of $K$.  
\end{thm}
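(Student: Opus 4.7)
The plan is to reduce to two cases already under control: almost strong approximation for the torus $T$, established earlier in the paper under the same hypotheses, and classical strong approximation for a simply connected semisimple group, combined through a central isogeny. Let $\tilde H\to H$ be the simply connected cover, set $\tilde G:=T\times\tilde H$, and consider the central $K$-isogeny
\begin{equation*}
\pi\colon\tilde G\longrightarrow G,\qquad F:=\ker\pi,
\end{equation*}
where $F$ is a finite central $K$-subgroup. Since $T$ splits over $P$ and $Z(\tilde H)$ has its Galois action factoring through $M$, the Galois structure of $F\subset T\times Z(\tilde H)$ is controlled by $E=PM$, and so are all of the cohomological data appearing below.

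In the first step I would handle $\tilde G$. The torus factor yields a finite quotient $T(\AF_K(S))/\overline{T(K)}^{(S)}$ of bounded order by the theorem on almost strong approximation for tori already proved in the paper -- the required technical condition $\sigma|(P\cap L)=\mathrm{id}$ follows from (\ref{technicalconditionreductive}) since $P\subset E$. For the simply connected semisimple factor $\tilde H=\prod_i\Res_{K_i/K}\tilde H_i$, one invokes the Kneser--Platonov--Margulis--Prasad theorem: $S$ is infinite (being tractable) and each absolutely almost simple factor $\tilde H_i$ is $K_{i,w}$-isotropic for almost all places $w$, so full strong approximation holds. Consequently
\begin{equation*}
\overline{\tilde G(K)}^{(S)}=\overline{T(K)}^{(S)}\times\tilde H(\AF_K(S)),
\end{equation*}
and $\tilde G(\AF_K(S))/\overline{\tilde G(K)}^{(S)}$ is finite of order bounded in terms of $\ell,n,r$.

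The second step is to transfer finiteness from $\tilde G$ to $G$ using the long exact sequences of Galois cohomology attached to $1\to F\to\tilde G\to G\to 1$, both globally and at each completion. Globally, $G(K)/\pi(\tilde G(K))$ embeds into $H^1(K,F)$, a finite group of order bounded in terms of $|F|$ and $[E:K]$, hence in terms of $\ell$ and $n$. Locally, each $\pi_v\colon\tilde G(K_v)\to G(K_v)$ is open with cokernel injecting into $H^1(K_v,F)$, which vanishes at nonarchimedean $v$ unramified in $E$. Thus only a bounded finite set of places (archimedean ones and those ramified in $E$) contributes, so $\pi(\tilde G(\AF_K(S)))\subset G(\AF_K(S))$ is an open subgroup of finite index bounded in terms of $\ell,n,r$. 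A diagram chase then yields
\begin{equation*}
[G(\AF_K(S)):\overline{G(K)}^{(S)}]\le[\tilde G(\AF_K(S)):\overline{\tilde G(K)}^{(S)}]\cdot[G(\AF_K(S)):\pi(\tilde G(\AF_K(S)))],
\end{equation*}
each factor bounded by a constant depending only on $\ell,n,r$. Abelianness of the quotient follows because $H=[G,G]$ satisfies $\overline{H(K)}^{(S)}\supset\pi_H(\tilde H(\AF_K(S)))$, which has finite index in $H(\AF_K(S))$: the quotient $G(\AF_K(S))/\overline{G(K)}^{(S)}$ is then a quotient of $G(\AF_K(S))/H(\AF_K(S))\cdot\overline{G(K)}^{(S)}$, itself a quotient of the abelian torus quotient. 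Normality of $\overline{G(K)}^{(S)}$ is automatic from its finite index combined with this abelianness.

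The main obstacle will be the bookkeeping in the second step: verifying that $\pi(\tilde G(\AF_K(S)))$ is closed in $G(\AF_K(S))$ so that its interaction with $\overline{G(K)}^{(S)}$ is controlled, and that no uncontrolled local obstructions appear outside the expected bad places. Once that is pinned down, the bound $C(\ell,n,r)$ assembles from (i) the torus quotient bound from the earlier result, (ii) the global $|H^1(K,F)|$, and (iii) the local contributions $|H^1(K_v,F)|$ at real valuations and at the finitely many places ramified in $E$, each explicitly controlled by $\ell$, $n$, and $r$.
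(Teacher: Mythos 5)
There is a genuine gap, and it sits exactly where you flag ``bookkeeping'': the transfer of finiteness across the finite central isogeny $\pi\colon T\times\widetilde{H}\to G$ does not work. For a finite $K$-group scheme $F$ over a \emph{global} field, $H^1(K,F)$ is infinite (e.g.\ $F=\mu_n$ gives $K^{\times}/(K^{\times})^n$), so $G(K)/\pi(\widetilde{G}(K))$ is in general infinite; and locally $H^1(K_v,F)$ does \emph{not} vanish at nonarchimedean places unramified in $E$ (again $H^1(K_v,\mu_n)=K_v^{\times}/(K_v^{\times})^n\neq 1$ for all finite $v$; already $\mathrm{PGL}_2(\mathbb{Z}_p)/\pi(\mathrm{SL}_2(\mathbb{Z}_p))\simeq \mathbb{Z}_p^{\times}/(\mathbb{Z}_p^{\times})^2$ is nontrivial for every odd $p$). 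Hence $[G(\AF_K(S)):\pi(\widetilde{G}(\AF_K(S)))]$ is infinite whenever $F\neq 1$, the second factor in your displayed inequality is infinite, and the argument collapses. A decisive sanity check: your step 2 uses the hypothesis (\ref{technicalconditionreductive}) only through its restriction to $P$ (via the torus factor $T$), so your argument, if valid, would apply verbatim to a semi-simple adjoint outer form, where $T=Z(G)^{\circ}=\{1\}$ and $P=K$ make the torus condition vacuous; but \S\ref{ASAfailure} constructs exactly such a group failing almost strong approximation for a tractable $S$ violating the condition on $M\cap L$. The nontrivial adelic cokernel of the finite-kernel isogeny is precisely the mechanism of that counter-example, not a removable technicality, and controlling $G(K)\cdot\pi(\widetilde{G}(\AF_K(S)))$ rather than $\pi(\widetilde{G}(\AF_K(S)))$ requires genuine class-field-theoretic input in which the field $M$ must intervene.

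The paper circumvents this in two moves you would need to adopt. For the special case where $H$ is simply connected, one does not cover $G$ but maps it \emph{onto} the quotient torus $T'=G/H$: strong approximation for $H$ puts $H(\AF_K(S))$ inside $\overline{G(K)}^{(S)}$ (giving normality and abelian quotient), the index $[T'(\AF_K(S)):\overline{T'(K)}^{(S)}]$ is bounded by Theorem \ref{T:1}, and the deficiency $[T'(K):\pi(G(K))]$ is bounded by $2^{dr}$ using the Hasse principle for $H^1(\cdot,H)$ over the real places only -- this is where simple connectedness of the kernel is essential. For the general case, Lemma \ref{L:spec_cover} replaces your finite-kernel cover by an exact sequence $1\to T_0\to\widetilde{G}\to G\to 1$ whose kernel $T_0$ is a \emph{quasi-split torus} split over $M$ (a $z$-extension--type construction): Hilbert's Theorem 90 then makes $\nu$ surjective on points over every field and on $S$-adelic points, so there is no cokernel to control at all, and the splitting field of the central torus of $\widetilde{G}$ becomes $E=PM$ -- which is exactly how $M$ enters the hypothesis. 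Your first step (strong approximation for $\widetilde{H}$ plus Theorem \ref{T:1} for the torus part) is fine and is also the engine of the paper's special case, but the passage back to $G$ must go through a torus kernel, not a finite one.
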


It should be noted that a tractable set $S$ as in the statement of the theorem does satisfy the condition $\mathfrak{d}_K(S \cap \mathrm{Spl}(E/K)) > 0$ -- see the discussion after Proposition \ref{P:1000}. On the other hand, if $G$ is a semi-simple $K$-group which is an inner twist of a $K$-split group, then $E = K$ and hence the latter condition reduces to $\mathfrak{d}_K(S \cap V^K_f) > 0$. So, we obtain the following. 

\begin{cor}\label{C:1}
Let $G$ be a semi-simple $K$-group which is an inner form of a $K$-split group. Then $G$ has almost strong approximation with respect to any  set $S \subset V^K$ containing $V^K_{\infty}$ and satisfying $\mathfrak{d}_K(S \cap V^K_f) > 0$ and in particular for any tractable set. 
\end{cor}

However, it is important to note that condition (\ref{technicalconditionreductive})  on tractable sets in the theorem cannot be omitted in the general case. To demonstrate this, in \S \ref{ASAexamples} (resp., in \S \ref{ASAfailure}) we construct an example of a torus (resp., a nonsimply connected absolutely almost simple  group) that fails to have almost strong approximation for a tractable set $S$ that does not satisfy (\ref{technicalconditionreductive}). 

The proof of Theorem \ref{T:A} involves several steps that rely on different techniques. First,  using results of class field theory and the Chebotarev Density Theorem, we handle the case of quasi-split tori (see Theorem \ref{T:2}). We then realize an arbitrary $K$-torus $T$ as a quotient of a quasi-split one and prove almost strong approximation for $T$ (under appropriate assumptions) using 
the Nakayama-Tate Theorem. It turns out that when $S$ is a tractable set and the condition 
(\ref{technicalconditionreductive}) holds, the quotient $T(\mathbb{A}_{K,S})/\overline{T(K)}^{(S)}$ is a finite group  of order dividing a constant $\tilde{C}(d , n)$ that depends only on $d = \dim T$ and $n = [L : K]$, see Theorem \ref{T:1} and (\ref{C(d,n)}).  

For an arbitrary reductive $K$-group $G=TH$ having nontrivial semi-simple part $H$, we first consider the special case where $H$ is simply connected -- see Proposition \ref{P:simp-con}, and then reduce the general case to the special case by using constructions and techniques from the theory of algebraic groups. 

\vskip6pt

Being available for not necessarily simply connected groups, almost strong approximation is likely to expand the range of applications of the classical property of strong approximation, many of which can be found in \cite{PlRa}. As an example of a new application, we present here a result on the {\it Congruence Subgroup Problem}. We refer the reader to \S 8.1 for a discussion of the CSP, including  the notion of the congruence kernel, the statement of Serre's Congruence Subgroup Conjecture and a survey of some of the available results. Among recent developments one can mention a uniform (i.e., requiring no case-by-case considerations) proof \cite{PrRa} of the triviality of the congruence kernel $C^S(G)$ for an absolutely almost simple simply connected algebraic group $G$ over a global field $K$ with respect to a set of valuations $S \subset V^K$ that contains $V^K_{\infty}$ and contains all but finitely many valuations in a generalized arithmetic progression but does not contain any nonarchimedean valuation $v$ such that $G$ is $K_v$-anisotropic -- among other things, this result provides additional evidence for Serre's conjecture. Subsequently, Radhika and Raghunathan \cite{RadRag} showed that the result remains valid for anisotropic inner forms of type $\textsf{A}_n$ (i.e., for norm one groups $G = \mathrm{SL}_{1 , D}$ associated with central division $K$-algebras) for a larger class of sets $S$ that basically coincides with our tractable sets. Using our results on almost strong approximation for tori with respect to tractable sets, we were able to extend the result of \cite{RadRag} to groups of all types.

\begin{thm}\label{T:B}
Let $G$ be an absolutely almost simple simply connected algebraic group defined over a global field $K$, and let $M/K$ be the minimal Galois extension over which $G$ becomes an inner form of a $K$-split group. Assume that the Margulis-Platonov conjecture (MP) (cf. \S \ref{CSPapplication}) holds for $G(K)$. Let $S\subset V^K$ be a tractable set containing a set of the form  $V^K_{\infty} \cup (\mathcal{P}(L/K , \mathcal{C}) \setminus \mathcal{P}_0)$, where 
\begin{equation}\label{E:XXX}
\sigma \vert (M\cap L) = \mathrm{id}_{M \cap L} \ \ \text{for some} \ \  \sigma \in \mathcal{C},
\end{equation}
and does not contain any nonarchimedean $v$ for which $G$ is $K_v$-anisotropic. Then the congruence kernel $C^S(G)$ is trivial.
\end{thm}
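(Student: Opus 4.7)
The plan is to follow the strategy used by Prasad--Rapinchuk \cite{PrRa} and refined by Radhika--Raghunathan \cite{RadRag} for anisotropic inner forms of type $\textsf{A}_n$, now enabled at all types by the almost strong approximation for tori (Theorem \ref{T:A}). The argument divides into two stages: centrality of $C^S(G)$ inside the $S$-arithmetic completion, followed by its triviality.

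For centrality, the Margulis--Platonov conjecture provides the standard description of normal subgroups of $G(K)$, and the hypothesis that $S$ avoids every nonarchimedean $v$ at which $G$ is anisotropic ensures that $G$ has classical strong approximation with respect to $S$: the product $\prod_{v\in S} G(K_v)$ is non-compact since $S$ is tractable, hence infinite, and $G$ is $K_v$-isotropic at all nonarchimedean $v \in S$. The centrality argument of \cite[\S 5]{PrRa} then goes through essentially unchanged, with the only modification being that one invokes tractability (and the vanishing of $\mathfrak{d}_K(\mathcal{P}_0)$) to guarantee that $V_f^K\setminus S$ still carries positive density and hence supplies the Chebotarev generic elements needed for the commutator computations.

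For triviality, we construct a maximal $K$-torus $T \subset G$ whose minimal splitting field $P$ satisfies the hypothesis of Theorem \ref{T:A} for the same data $(L/K, \mathcal{C}, \mathcal{P}_0)$---namely, that some $\sigma \in \mathcal{C}$ fixes $(PM) \cap L$. Starting from (\ref{E:XXX}), which supplies $\sigma \in \mathcal{C}$ fixing $M \cap L$, we pick $v_0 \in \mathcal{P}(L/K, \mathcal{C}) \setminus \mathcal{P}_0$ at which $G$ is isotropic, with $\mathrm{Fr}_{L/K}(w|v_0) = \sigma$ for some $w|v_0$. A Chebotarev/approximation argument inside $G$ then produces a regular semisimple element of $G(K)$ whose centralizer is a maximal $K$-torus $T$ with splitting field arranged so that $P \cap L$ lies in the fixed field of $\sigma$ in $L$, whence $\sigma|((PM)\cap L) = \mathrm{id}$. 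Theorem \ref{T:A} applied to $T$ yields that $T(\mathbb{A}_K(S))/\overline{T(K)}^{(S)}$ is finite; combined with the first-stage centrality and the standard exact sequence relating $C^S(G)$ to congruence kernels of maximal $K$-tori (as in \cite{PrRa,RadRag}), this finiteness confines $C^S(G)$ to a finite central subgroup, and the usual divisibility properties of central metaplectic-type kernels (no nontrivial finite quotients at primes accessible via the generic tori constructed above) then force $C^S(G) = 1$.

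The main obstacle is the simultaneous torus construction in the triviality step: one must arrange both that (i) $T$ is generic enough that the induced map on congruence kernels detects all of $C^S(G)$, and (ii) the composite field $PM$ satisfies the technical condition (\ref{technicalconditionreductive}) needed to apply Theorem \ref{T:A}. Coordinating the Chebotarev density inputs for both conditions---while also avoiding the density-zero set $\mathcal{P}_0$---is the technical heart of the argument; the extension from inner type $\textsf{A}_n$ (where $T$ embeds explicitly via maximal subfields of the division algebra $D$) to arbitrary types forces us to replace such direct constructions by a more conceptual Chebotarev argument internal to $G$, with the extra field $M$ encoding the outer-form part of the twist.
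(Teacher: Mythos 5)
There is a genuine gap, and it lies in where you locate the new input. Your centrality step asserts that the argument of \cite[\S 5]{PrRa} ``goes through essentially unchanged'' for a tractable $S$, with only a cosmetic density remark. That is exactly where it does \emph{not} go through: the centrality criterion (\cite[Theorem 3.1(ii)]{PrRa}, restated here as Theorem \ref{centrality}) requires, for every regular element $t \in G(K)\cap \prod_{v\in V}\mathcal{U}(v,T_v)$ and its centralizer torus $T=Z_G(t)$, the inclusion $T(\AF_K(\tilde S))^m \subset \tilde{\pi}(Z_{\widehat{G}^{\tilde S}}(t))$ for a fixed $m$. In \cite{PrRa} this is supplied by the finite-exponent result for $T(\AF_K(S))/\overline{T(K)}^{(S)}$, whose proof needs $S$ to contain \emph{all but finitely many} valuations of the progression; for a tractable $S$ (progression minus a density-zero, possibly infinite, set) that result is unavailable, and the whole point of Theorem \ref{T:1} is to replace it. The paper's proof feeds Theorem \ref{T:1} precisely into the centrality criterion: by \cite[Lemma 5.5]{PrRa} (Lemma \ref{disjointextns}) one chooses a finite $V$ and local tori $T_v$ so that every centralizer torus $T$ arising this way has $P_T\cap L = M\cap L$; then (\ref{E:XXX}) gives the technical condition for $T$, Theorem \ref{T:1} bounds $[T(\AF_K(\tilde S)):\overline{T(K)}^{(\tilde S)}]$ by $m=\tilde{C}(d,n)$ with $\tilde S = S\setminus V$, and properness of $\tilde\pi$ turns $\overline{T(K)}^{(\tilde S)}\subset \tilde{\pi}(Z_{\widehat{G}^{\tilde S}}(t))$ into (\ref{E:YYY10}).

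Your second stage is also not a valid mechanism as described. Once centrality is established, triviality does not come from ``a single generic torus detecting all of $C^S(G)$'' together with unspecified divisibility properties; there is no exact sequence relating $C^S(G)$ to congruence kernels of a maximal torus that would do this work. The actual route is: centrality plus (MP) plus the hypothesis $\mathcal{A}\cap S=\varnothing$ identify $C^{\tilde S}(G)$ with (the dual of) the metaplectic kernel $M(\tilde S,G)$, which vanishes because $\tilde S$ is infinite \cite{RaPra}; then the natural map $C^{\tilde S}(G)\to C^{S}(G)$ is surjective (\cite[Lemma 6.2]{Rag}, again using $\mathcal{A}\cap S=\varnothing$), giving $C^S(G)=1$. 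Finally, note a smaller inaccuracy: for the tori in question one only needs Theorem \ref{T:1} with the condition on $P\cap L$, not Theorem \ref{T:A} with a condition on $(PM)\cap L$; and the device of passing to $\tilde S=S\setminus V$ (so that the auxiliary places $V$ lie outside the set used in the congruence sequence) is essential to make the criterion applicable and is missing from your outline.
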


It should be noted that in the cases where Serre's congruence subgroup conjecture remains open, this theorem is the best result available at this point. As we already pointed out, for inner forms of type $\textsf{A}_n$, Theorem \ref{T:B} is due to Radhika and Raghunathan, and now we would like to transcribe it for outer forms of this type.
 
\begin{cor}\label{corB}
Let $G$ be an absolutely almost simple simply connected outer form of type $\mathsf{A}_{\ell}$, i.e. $G = \mathrm{SU}_n(D,h)$, the special unitary group of an $n$-dimensional non-degenerate $\tau$-hermitian form $h$ over a division algebra $D$ of degree $d$ whose center $M$  is a quadratic extension of $K$ and the involution $\tau$ of $D$ satisfies $M^{\tau} = K$, with $\ell = dn - 1$. Assume that (MP)  holds for  $G(K)$.
Let $S \subset V^K$ be a tractable set containing a set of the form  $V^K_{\infty} \cup (\mathcal{P}(L/K,\mc{C}) \setminus \mathcal{P}_0)$  where $\sigma \vert (M\cap L) = \mathrm{id}_{M \cap L}$ for some   $\sigma \in \mathcal{C}$, 
and does not contain any nonarchimedean $v$ for which $G$ is $K_v$-anisotropic. Then $C^S(G) =  1 $. 
\end{cor}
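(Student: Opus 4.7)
The plan is to derive the corollary as an immediate consequence of Theorem \ref{T:B}. Since the hypotheses on the set $S$, on (MP), and on the avoidance of nonarchimedean valuations $v$ at which $G$ is $K_v$-anisotropic transfer verbatim between the two statements, essentially the only thing to verify is that the quadratic extension $M/K$ appearing in the statement of the corollary — the center of the division algebra $D$ — coincides with the minimal Galois extension of $K$ over which $G = \mathrm{SU}_n(D, h)$ becomes an inner form.

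For this, I would first appeal to the standard classification of $K$-forms of type $\mathsf{A}_\ell$ (cf.\ \cite[\S 2.3]{PlRa}): the hypotheses on $(D, \tau, h)$ force $G$ to be absolutely almost simple and simply connected of type $\mathsf{A}_{dn - 1}$, and because $\tau$ restricts to the nontrivial element of $\mathrm{Gal}(M/K)$, it is an outer form (so in particular $\ell = dn - 1 \geq 2$, as type $\mathsf{A}_1$ admits no outer forms). Next I would verify that $G$ becomes inner precisely upon base change to $M$: the $*$-action of $\Gamma_K := \mathrm{Gal}(\bar K / K)$ on the Dynkin diagram $\mathsf{A}_\ell$ factors through $\mathrm{Aut}(\mathsf{A}_\ell) \cong \ZZ/2\ZZ$, and the subgroup $\Gamma_M$ lies in the kernel of this action because $D \otimes_K M$ is central simple over $M$ and carries the $M$-linear involution $\tau \otimes \mathrm{id}_M$, relative to which $G_M$ is visibly of inner type (a special unitary-type group with no nontrivial Galois swap of the split factors after further base change to $\bar K$). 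Since $[M:K] = 2$, the extension $M/K$ is Galois, and $G$ is not inner over $K$, this $M$ is in fact the minimal Galois extension with the required property.

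With $M$ identified, the hypothesis $\sigma \vert (M \cap L) = \mathrm{id}_{M \cap L}$ for some $\sigma \in \mathcal{C}$ in the corollary is precisely condition (\ref{E:XXX}) of Theorem \ref{T:B}, and all remaining hypotheses are already in place. Invoking Theorem \ref{T:B} then yields $C^S(G) = \{1\}$. There is no genuine obstacle here; the corollary is a direct translation of Theorem \ref{T:B} into the explicit language of outer special unitary groups of type $\mathsf{A}$, with the only conceptual ingredient being the identification of $M$ as the minimal extension over which $G$ becomes inner.
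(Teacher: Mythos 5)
Your proposal is correct and matches the paper's intent: the corollary is stated as a direct specialization of Theorem \ref{T:B}, with the only substantive point being the standard identification of the center $M$ of $D$ as the minimal Galois extension of $K$ over which $G = \mathrm{SU}_n(D,h)$ becomes an inner form, which you verify correctly. Your condition $\sigma\vert(M\cap L)=\mathrm{id}_{M\cap L}$ then is exactly (\ref{E:XXX}), and Theorem \ref{T:B} applies verbatim.
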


Some comments are in order. Since (MP) is known for inner forms of type $\textsf{A}_n$ (see \cite{R-MP}, \cite{Segev}), no assumption on the truth of (MP) was needed in \cite{RadRag}. Second, since \cite{RadRag} deals only with inner forms, (\ref{E:XXX}) holds automatically in their situation, while in the general case we need to assume (\ref{E:XXX}) in order to apply our results on almost strong approximation with respect to tractable sets. Of course, one can expect to generalize Theorem \ref{T:B} to arbitrary subsets $S \subset V^K$ containing $V^K_{\infty}$, not containing any nonarchimedean anisotropic places for $G$ and satisfying $\mathfrak{d}_K(S \cap \mathrm{Spl}(M/K)) > 0$.    

\vskip1mm 

The structure of the paper is as follows. Section \ref{ASAexamples} contains some motivating  examples that lead to the adequate  statement of the theorem on almost strong approximation in tori -- cf. Theorem \ref{T:1}. The proof of this theorem is  carried out in \S \ref{ASAquasisplit} for quasi-split tori by first working out the case of the multiplicative group of a finite separable extension by means of  techniques from class field theory and Dirichlet density considerations. Then, after developing in \S \ref{BoundH1} some auxiliary cohomological results based on the Nakayama-Tate theorem, we complete the proof of Theorem \ref{T:1} in \S \ref{ASAtori}. Almost strong approximation in general reductive groups is analyzed in \S \ref{ASAreductive} where we prove our main Theorem \ref{T:A}. Then in \S \ref{ASAfailure} we construct an example of an absolutely simple adjoint group, which is an outer form of a split group, that does {\it not} have almost strong approximation with respect to a tractable set of valuations for which the condition (\ref{technicalconditionreductive}) fails. Finally, in \S \ref{CSPapplication} we review the required material dealing with the congruence subgroup problem, summarize the approach to proving the triviality of the congruence kernel developed in \cite{PrRa}, and then apply our Theorem \ref{T:1} to prove Theorem \ref{T:B}.
 
\vskip3pt

\textbf{Notations and conventions.}
Throughout the paper, $K$ will denote a global field. We let $V^K$ denote the set of all inequivalent nontrivial valuations of $K$, with $V^K_{\infty}$ and $V^K_f$ denoting the subsets of archimedean and nonarchimedean valuations, respectively. For $v \in V^K$, the corresponding {\it completion} will be denoted $K_v$, and for $v \in V^K_f$ the {\it valuation ring} of the latter will be denoted $\mathcal{O}_v$ (occasionally, we will set $\mathcal{O}_v = K_v$ for $v \in V^K_{\infty}$). Given a subset $S \subset V^K$, we let $\mathbb{A}_{K,S}$ denote the {\it ring of $S$-adeles} of $K$ (cf. \cite[Ch. 1, \S1.2.1]{PlRa2}), and then for a linear algebraic $K$-group $G$ we set $G(\mathbb{A}_{K,S})$ to be the group of $\mathbb{A}_{K,S}$-points of $G$, in other words, the {\it group of $S$-adeles} of $G$ equipped with the adelic topology (cf. \cite[Ch. 5, \S5.1]{PlRa2}). As usual, $\mathbb{G}_m$ denotes the 1-dimensional split torus, and then for $F/K$ a finite separable extension, $\mathrm{R}_{F/K}(\mathbb{G}_m)$ denotes the corresponding {\it quasi-split torus}, with $\mathrm{R}^{(1)}_{F/K}(\mathbb{G}_m)$ reserved to denote the associated {\it norm torus}. Finally, we will use $\mathbb{P}$ to denote the set of all rational primes, and for integers $a , m$ with $(a , m) = 1$ the symbol $\mathbb{P}_{a(m)}$ will represent the set of all primes $p$ satisfying $p \equiv a(\mathrm{mod}\: m)$ (i.e., all primes in the corresponding {\it arithmetic progression}). 

\vskip3pt 

\textbf{Acknowledgements.} We would like to thank the referees for their comments and suggestions.

\section{Almost strong approximation for tori: motivating examples and the statement of the main theorem}\label{ASAexamples}

We begin with some examples involving tori that, on the one hand, motivate the property of almost strong approximation and indicate that it can be expected to hold for arithmetic progressions, and, on the other hand, exhibit some constraints. Our main result for tori (Theorem \ref{T:1}) shows that almost strong approximation does hold not only for tractable sets associated with generalized arithmetic progressions subject to some constraints of this kind but for more general sets characterized in terms of Dirichlet density.

Let $K$ be a global field, and let $S \subset V^K$ be a nonempty subset. First, we consider the case of the 1-dimensional split torus $T = \mathbb{G}_m$ where the corresponding group of $S$-adeles $T(\mathbb{A}_{K,S})$ coincides with the group of $S$-ideles $\mathbb{I}_{K,S}$ of $K$ (cf. \cite[Ch. II]{ANT}). To make our considerations explicit, we recall that $\mathbb{I}_{K,S}$ is the restricted topological product of the multiplicative groups $K_v^{\times}$ of the completions $K_v$ for $v \in V^K \setminus S$ with respect to the open subgroups $\mathcal{O}_v^{\times} \subset K_v^{\times}$ (with the convention above that $\mathcal{O}_v = K_v$ if $v \in V^K_{\infty}$). Then algebraically $\mathbb{I}_{K,S}$ coincides with the group of invertible elements $\mathbb{A}_{K,S}^{\times}$ but the natural $S$-idelic topology on $\mathbb{I}_{K,S}$ is {\it stronger} than topology induced by the $S$-adelic topology on $\mathbb{A}_{K,S}$ and admits a basis of open sets consisting of sets of the form 
\begin{equation}\label{E:basis}
\prod_{v \in S'} W_v \times \prod_{v \notin S' \cup S} \mathcal{O}_v^{\times}, 
\end{equation}
where $S' \subset V^K \setminus S$ is an arbitrary finite subset, and $W_v \subset K_v^{\times}$ are arbitrary open subsets for $v \in S'$. Equipped with this topology, $\mathbb{I}_{K,S}$ is a locally compact topological group with a distinguished open subgroup 
$$
\mathbb{U}_S := \prod_{v \notin S} \mathcal{O}_v^{\times}. 
$$
It follows from \cite[1.2.1]{PlRa2} that for a number field $K$ and $S = V^K_{\infty}$, the index $[\mathbb{I}_{K,S} : \mathbb{U}_{S}K^{\times}]$ (with $K^{\times}$ embedded in $\mathbb{I}_{K,S}$ diagonally) equals the class number $h(K)$ of $K$, hence is always finite. Thus, for any $S \supset V^K_{\infty}$ the index $[\mathbb{I}_{K,S} : \mathbb{U}_SK^{\times}]$ is finite, and is equal to one if $h(K) = 1$. 
In this case, the intersection $\mathbb{U}_S \cap K^{\times}$ is precisely the group of $S$-units $\mathbb{E}_S$ in $K$, and as $\mathbb{U}_S$ is open in $\mathbb{I}_{K,S}$, we conclude that the index of the closure $[\mathbb{I}_{K,S} : \overline{K^{\times}}^{(S)}]$ is finite if and only if the index $[\mathbb{U}_S : \overline{\mathbb{E}_S}^{(S)}]$ is finite (here  $\bar{\  }\bar{\ }^{(S)}$ denotes the closure in the $S$-idelic topology). Moreover, if $\overline{K^{\times}}^{(S)} = \mathbb{I}_{K,S}$ then $\overline{\mathbb{E}_S}^{(S)} = \mathbb{U}_S$, and the converse is true if $h(K) = 1$. Similar remarks are valid also in the function field case, but we will not formulate them here since in our examples we will stick to the number field case. 

After these preliminaries, we are ready to test strong approximation in some cases. If $K=\QQ$, we simplify notation by writing $\II_S$ (resp. $\AF_S$) rather than $\II_{\QQ,S}$ (resp. $\AF_{\QQ,S}$) for any subset $S\subset V^\QQ$, and also write $\mathbb{I}$ to denote the full group of ideles of $\mathbb{Q}$. First, let $K = \mathbb{Q}$ and $S =\{v_\infty\}$, where $v_\infty$ denotes the unique archimedean valuation of $\QQ$. Then $\mathbb{E}_S = \{ \pm 1 \}$, and hence the index $[\mathbb{U}_S : \overline{\mathbb{E}_S}^{(S)}]$ is infinite (in fact, uncountable), hence the index $[\mathbb{I}_S : \overline{\QQ^{\times}}^{(S)}]$ is also infinite. Now, let $S = \{v_{\infty}\} \cup \{ v_2 \}$ where $v_2$ is the dyadic valuation of $K = \mathbb{Q}$, in which case $\mathbb{E}_S = \langle -1 , 2 \rangle$ is already infinite. Set $Q = \mathbb{P}_{1(8)}$, which is infinite by Dirichlet's Theorem on arithmetic progressions. For every $q \in Q$ we have $\mathbb{E}_S \subset {\mathbb{Z}_q^{\times}}^2$; in other words, $\mathbb{E}_S$ is contained in the kernel of the canonical continuous surjective homomorphism 
\begin{equation}\label{E:square}
\mathbb{U}_S \longrightarrow \prod_{q \in Q} \mathbb{Z}_q^{\times} / {\mathbb{Z}_q^{\times}}^2, 
\end{equation}
implying that the indices $[\mathbb{U}_S : \overline{\mathbb{E}_S}^{(S)}]$, hence also $[\mathbb{I}_S : \overline{\QQ^{\times}}^{(S)}]$, are infinite. It is easy to see that this result extends to any subset $S$ of the form $S = \{v_{\infty}\} \cup \{v_{p_1}, \ldots , v_{p_r}\}$ for any finite collection of primes $p_1, \ldots , p_r$: one simply needs to take $Q = \mathbb{P}_{1(4p_1 \cdots p_r)}$ in the above argument\footnote{This argument can be extended even further to show that a nontrivial torus $T$ over a global field $K$ cannot have strong approximation with respect to any finite set $S \subset V^K$ using such ingredients as Dirichlet Unit Theorem and Chebotarev Density Theorem -- cf. \cite[2.2]{Ra-SA} for details in the number field case.}. On the other hand, an argument of this type cannot be implemented whenever $S$ is infinite, which raises the question if $T = \mathbb{G}_m$ always has strong approximation for $S$ infinite. The following example shows that this is not the case. 

\begin{example}\label{infSnotarithmetic}
\rm{For a prime $p >2$ and any integer $x$ not divisible by $p$, we let $\displaystyle \left( \frac{x}{p} \right)$ denote the corresponding Legendre symbol. As above, it is enough to construct two {\it infinite} disjoint subsets $P = \{p_1, p_2, \ldots \}$ and $Q = \{q_1, q_2, \ldots \}$ of $\mathbb{P}_{1(4)}$ such that 
\begin{equation}\label{E:Leg}
\left( \frac{p}{q} \right) = 1 \ \ \text{for all} \ \ p \in P, \ q \in Q. 
\end{equation}
Indeed, then for $S = \{v_{\infty}\} \cup \{ v_p \, \vert \, p \in P\}$, the group $\mathbb{E}_S$, which is generated by $-1$ and all primes $p \in P$, is contained in the kernel of the map (\ref{E:square}), making the index $[\mathbb{I}_S : \overline{\QQ^{\times}}^{(S)}]$ infinite. 

We construct the required sets $P$ and $Q$ inductively. Pick an arbitrary $p_1 \in \mathbb{P}_{1(4)}$ (e.g., one can take $p_1 = 5$) and choose $q_1 \in \mathbb{P}_{1(4)}$ so that $q_1 \equiv 1(\mathrm{mod}\: p_1)$ (e.g., take $q_1 = 11$). Then using quadratic reciprocity we obtain $$ \left( \frac{p_1}{q_1}\right ) = \left( \frac{q_1}{p_1}\right) = \left( \frac{1}{p_1} \right) = 1.$$ Suppose that we have already found $p_1, \ldots , p_{\ell}$ and $q_1, \ldots , q_{\ell}$ $(\ell \geq 1)$ such that 
$$
\left( \frac{p_i}{q_j} \right) = 1 \ \ \text{for all} \ \ i,j = 1, \ldots , \ell. 
$$
Now, choose $p_{\ell+1} \in \mathbb{P}_{1(4)}$ to satisfy $p_{\ell+1} \equiv 1(\mathrm{mod}\: q_1 \cdots q_{\ell})$, and
$q_{\ell+1} \in \mathbb{P}_{1(4)}$ to satisfy $q_{\ell+1} \equiv 1(\mathrm{mod}\: p_1 \cdots p_{\ell+1})$. Then  
$$
\left( \frac{p_{\ell+1}}{q_j} \right) = \left( \frac{1}{q_j}  \right) = 1 \ \ \text{for} \ \ j = 1, \ldots , \ell, \ \ \text{and} \ \ \left( \frac{p_{i}}{q_{\ell+1}} \right) = 
\left( \frac{q_{\ell+1}}{p_{i}} \right) = \left( \frac{1}{p_{i}} \right) = 1 \ \ \text{for} \ \ i = 1, \ldots , \ell+1
$$
by quadratic reciprocity, as required. (It follows from our construction that $p_{\ell} > p_1^{\ell-1}$ for all $\ell > 1$, which easily implies that the set of primes $P$ has Dirichlet density zero. In fact, it follows from Proposition \ref{P:1000} that there is no example of this kind with $P$ having positive density.) } 
\end{example}


We now turn to our first example of strong approximation with respect to a {\it special} set of primes. 

\begin{example}\label{extori3}
\rm{Let $S = \{v_{\infty}\} \cup \{ v_p \, \vert \, p \in \mathbb{P}_{1(4)} \}$. We will now show $T = \mathbb{G}_m$ over $K = \mathbb{Q}$ has strong approximation with respect to $S$. As we noted above, due to $h(\mathbb{Q}) = 1$, it is enough to show that the subgroup $\mathbb{E}_S$, which is generated by $-1$ and all primes $p \in \mathbb{P}_{1(4)}$, is dense in $\mathbb{U}_S$. Since sets of the form  (\ref{E:basis}) constitute a basis of open sets for the $S$-idelic topology, it is enough to show that every set of the form 
$$
U \, = \, \prod_{i = 1}^r \left(a_i + p_i^{\alpha_i} \mathbb{Z}_{p_i} \right) \, \times \, \prod_{q \in \mathbb{P} \setminus (\mathbb{P}_{1(4)} \cup P)} \mathbb{Z}_q^{\times} 
$$
where $P = \{p_1 = 2, p_2, \ldots , p_r\} \subset \mathbb{P} \setminus \mathbb{P}_{1(4)}$ is a finite subset, and $\alpha_i \geq 1$ and $a_i$ with $(a_i , p_i) = 1$ are integers for $i = 1, \ldots , r$, intersects $\mathbb{E}_S$. Set $\varepsilon = 1$ if $a_1 \equiv 1(\mathrm{mod}\: 4)$, and $\varepsilon = -1$ if $a_1 \equiv 3(\mathrm{mod}\: 4)$, so that $\varepsilon a_1 \equiv 1(\mathrm{mod}\: 4)$ in all cases. Using the Chinese Remainder Theorem, we can find $c \in \mathbb{Z}$ satisfying 
$$
\begin{cases}
c \equiv \varepsilon a_i(\mathrm{mod}\: p_i^{\alpha_i}) \  \text{for} \ i = 1, \ldots , r, \\ 
c \equiv 1(\mathrm{mod}\: 4).
\end{cases}
$$
Next, by Dirichlet's Theorem on arithmetic progressions, there exists a prime $p \equiv c(\mathrm{mod}\: 4p_1^{\alpha_1} \cdots p_r^{\alpha_r})$. Then $\varepsilon p \in \mathbb{E}_S \cap U$. This completes the proof of the fact that $\overline{\mathbb{E}_S}^{(S)} = \mathbb{U}_S$.}
\end{example}

It follows from our general results that for any integers $a , m$ with $(a , m) = 1$ and $$S=\{v_\infty\}\cup\{v_p\,|\,p\in\mathbb{P}_{a(m)}\},$$ the index $[\mathbb{I}_S : \overline{\QQ^{\times}}^{(S)}]$ is always finite (cf. Proposition \ref{P:1}). However, as we will now show, this index is not always equal to one. 

\begin{example}\label{exampleASAindex}
\rm{Let $K = \mathbb{Q}$, let $q$ be a prime $\equiv 1(\mathrm{mod}\: 4)$, and set $S = \{v_{\infty}\} \cup \{ v_p \, \vert \, p \in \mathbb{P}_{1(q)}\}$. Then 
\begin{equation}\label{E:gr1}
[\mathbb{I}_S : \overline{\QQ^{\times}}^{(S)}] > 1. 
\end{equation}
Our proof will use the Artin map associated with the quadratic extension $L = \QQ(\sqrt{q})$, and we would like to point out that a suitable generalization of this approach will play a crucial role also in the proof of Proposition \ref{P:1}. We let $(* , *)_p$ (resp., $(* , *)_{\infty}$) denote the Hilbert symbol over $\mathbb{Q}_p$ (resp., over $\mathbb{R}$). If we identify the Galois group $\mathrm{Gal}(L/\QQ)$ with $\{ \pm 1 \}$, then the Artin map $\psi_{L/\QQ} \colon \mathbb{I} \to \mathrm{Gal}(L/\QQ)$ is given by 
$$
(x_p)_p \mapsto (x_{\infty} , q)_{\infty} \cdot \prod_{p \in \mathbb{P}} (x_p , q)_p.
$$
Then by class field theory for $L/\QQ$, the kernel $N := \ker \psi_{L/\QQ} \subset \mathbb{I}$ is an open subgroup containing $\QQ^{\times}$ and having index two. Let $\pi_S \colon \mathbb{I} \to \mathbb{I}_S$ be the canonical projection. Then $\pi_S(N)$ is an open subgroup of $\mathbb{I}_S$ containing $\QQ^{\times}$, and to prove (\ref{E:gr1}) it is enough to show that $N \supset \ker \pi_S$ as then $\pi_S(N) \neq \mathbb{I}_S$. For this we observe that $q \in {\QQ_v^{\times}}^2$ for all $v \in S$. This is obvious for $v=v_\infty$, and follows from 
$$
\left( \frac{q}{p}\right) = \left( \frac{p}{q} \right) = \left( \frac{1}{q}\right) = 1
$$
for $v=v_p$ with $p \in \mathbb{P}_{1(q)}$. Let now $x = (x_p)_p \in \ker \pi_S$, i.e. $x_p = 1$ for $p \in \mathbb{P} \setminus \mathbb{P}_{1(q)}$. Then 
$$
\psi_{L/\QQ}(x) = (x_{\infty} , q)_{\infty} \cdot \prod_{p \in \mathbb{P}_{1(q)}} (x_p , q)_p = 1, 
$$
proving that $N \supset \ker \pi_S$. (Using the cyclotomic extension $\QQ(\zeta_q)$ in place of $L$ in the above argument, one can show that the index in (\ref{E:gr1}) can be arbitrarily large.)
}
\end{example}

The main takeaway from this example is that the property that can be expected to hold for tori with respect to arithmetic progressions is {\it not} strong approximation in the classical sense but rather the following variation (in fact, a slight weakening) of the latter which we define for \underline{arbitrary} algebraic groups (and not just for tori). 

\begin{definition}\label{D:ASA}
Let $G$ be a linear algebraic group defined over a global field $K$. We say that $G$ has {\it almost strong approximation} with respect to a subset $S \subset V^K$ if the closure $\overline{G(K)}^{(S)}$ of the group of $K$-rational points $G(K)$ diagonally embedded in the group of $S$-adeles $G(\mathbb{A}_{K,S})$ has finite index. 
\end{definition}

Thus, we can reformulate the result mentioned prior to Example \ref{exampleASAindex} by saying that $T = \mathbb{G}_m$ over $K = \mathbb{Q}$ has almost strong approximation for any set of the form $S = \{v_{\infty}\} \cup \{ v_p \, \vert \, p \in \mathbb{P}_{a(m)}\}$ with $(a , m) = 1$.

We also observe that if $[G(\mathbb{A}_{K,S}) : \overline{G(K)}^{(S)}] < \infty$ then there exists a finite subset $W \subset V^K \setminus S$ such that $\overline{G(K)}^{(S \cup W)} = G(\mathbb{A}_{K,S \cup W})$, i.e. $G$ has strong approximation with respect to $(S \cup W)$. Indeed, we may assume that $S \supset V^K_{\infty}$. Since $\overline{G(K)}^{(S)}$ is open in $G(\mathbb{A}_{K,S})$, we can find a finite subset $W_1 \subset V^K \setminus S$ for which $\displaystyle \overline{G(K)}^{(S)} \supset \prod_{v \notin S \cup W_1} G(\mathcal{O}_v)$. Now, let $\{ g^j\}_{j = 1}^t$ be a system of right coset representatives  of $G(\mathbb{A}_{K,S})$ by $\overline{G(K)}^{(S)}$, where $g^j = (g^j_v)_v$. Then there exists a finite subset $W_2 \subset V^K \setminus S$ such that
$$
g^j_v \in G(\mathcal{O}_v) \ \ \text{for all} \ \ j= 1, \ldots , t \ \ \text{and all} \ \ v \in V^K \setminus (S \cup W_2).
$$
Set $W := W_1 \cup W_2$. Then projecting $\displaystyle G(\mathbb{A}_{K,S}) = \bigcup_{j = 1}^t g^j \overline{G(K)}^{(S)}$ to $G(\mathbb{A}_{K,S \cup W})$, we obtain $G(\mathbb{A}_{K,S \cup W}) = \overline{G(K)}^{(S \cup W)}$, as required. This observation justifies the term ``almost strong approximation.''  We note, however, that there is no constructive way to determine $W$ or even its size from  information about the index  $[G(\mathbb{A}_{K,S}) : \overline{G(K)}^{(S)}]$. 

The last point we need to make before formulating the general result is that almost strong approximation with respect to arithmetic progressions in {\it nonsplit} tori encounters additional obstructions of arithmetic nature. 

\begin{example}\label{noASAbctechcond}
Let $K = \mathbb{Q}$ and let $S = \{v_{\infty}\} \cup \{ v_p \, \vert \, p \in \mathbb{P}_{3(4)}\}$. We consider the norm torus $T = \mathrm{R}_{L/\QQ}^{(1)}(\mathbb{G}_m)$ (cf. \cite[Ch. 2, \S2.1.7]{PlRa2}) associated with the extension $L = \QQ(i)$, $i^2 = -1$. Then 
\begin{equation}\label{E:infty10}
[T(\mathbb{A}_S) : \overline{T(\QQ)}^{(S)}] = \infty.
\end{equation}
To prove this, we adapt the strategy used above, viz. we introduce the open subgroup 
$$
\mathbb{U}_{T,S} := \prod_{p \in \mathbb{P} \setminus \mathbb{P}_{3(4)}} T(\mathbb{Z}_p),  
$$
where we consider the standard integral structure on $T$ for which $T(\mathbb{Z}_p)$ is the maximal compact subgroup of $T(\mathbb{Q}_p)$,  and set $\mathbb{E}_{T,S} := T(\QQ)\cap \mathbb{U}_{T,S}$. It is enough to show that $\mathbb{E}_{T,S}$ is finite, as then $[\mathbb{U}_{T,S} : \overline{\mathbb{E}_{T,S}}^{(S)}] = \infty$, and (\ref{E:infty10}) will follow. 

For this, following the referee's suggestion, we observe that $T$ is the variety defined by the equation $x^2 + y^2 - 1 = 0$, and it is easy to check that $T(\mathbb{Q}_p) = T(\mathbb{Z}_p)$ for all $p\in \mathbb{P}_{3(4)}$. So, 
$$
\mathbb{E}_{T, S} = T(\QQ) \bigcap \left( \prod_{p \in \mathbb{P}_{3(4)}} T(\mathbb{Q}_p) \times \prod_{p \in \mathbb{P} \setminus \mathbb{P}_{3(4)}}T(\ZZ_p)  \right) = T(\QQ) \bigcap \prod_{p\in\mathbb{P}} T(\mathbb{Z}_p) = T(\mathbb{Z}), 
$$
which coincides with $\mathbb{E} := \{\pm 1, \pm i \}$. Since $\mathbb{E}$ is finite, $\mathbb{E}_{T, S}$ is also finite, as required.

\end{example}

With regard to this example, we note that $\mathbb{P}_{3(4)}$ can be described as the generalized arithmetic progression $\mathcal{P}(L/\QQ , \{\sigma\})$, where $L = \QQ(i)$ and $\sigma \in \mathrm{Gal}(L/\QQ)$ is the nontrivial automorphism, and the splitting field of the torus $T$ is also $L$. Thus, situations of this kind create an obstruction to almost strong approximation. More generally, it was shown in \cite[Proposition 4]{PrRa-Irr} that given a generalized arithmetic progression $\mathcal{P}(L/K , \{\sigma\})$, where $L/K$ is a finite abelian extension of an arbitrary global field $K$ and $\sigma \in \mathrm{Gal}(L/K)$, and a finite separable extension $F/K$ such that $\sigma \vert (F \cap L) \neq \mathrm{id}_{F \cap L}$, for the torus $T = \mathrm{R}_{F/K}(\mathbb{G}_m)$ the quotient $T(\mathbb{A}_{K,S}) / \overline{T(K)}^{(S)}$ has infinite exponent. The theorem below shows that tori do have almost strong approximation with respect to tractable sets in the absence of obstructions of this kind. In fact, it applies to more general sets that can be characterized in terms of Dirichlet density. To relate these two types of sets, we recall that by Dirichlet Theorem on Primes in Arithmetic Progressions, for any relatively prime integers $a$ and $m$ the set $\mathbb{P}_{a(m)}$ has Dirichlet density $1/\varphi(m)$ (where $\varphi$ is Euler's totient function), and more generally by Chebotarev Density Theorem (cf. \cite[ch. VII, \S 2]{ANT}), $\mathcal{P}(L/K , \mathcal{C})$ has Dirichlet density $\vert \mathcal{C} \vert / [L : K]$.


\begin{thm}\label{T:1}
Let $K$ be a global field, and let $T$ be a $K$-torus with the minimal splitting field $P/K$. If a subset $S \subset V^K$ contains $V^K_{\infty}$ and satisfies $\mathfrak{d}_K(S \cap \mathrm{Spl}(P/K)) > 0$ then $T$ has almost strong approximation with respect to $S$. 

Furthermore, if $S$ is a tractable set of valuations containing a set of the form $V^K_{\infty} \cup (\mathcal{P}(L/K , \mathcal{C}) \setminus \mathcal{P}_0)$, where $\mathcal{P}(L/K , \mathcal{C})$ is a generalized arithmetic progression associated with a finite Galois extension $L/K$ and a conjugacy class $\mathcal{C} \subset \mathrm{Gal}(L/K)$, and $\mathcal{P}_0$ has Dirichlet density zero, such that 
\begin{equation}\label{technicalconditionarbtori}
\sigma \vert (P \cap L) = \mathrm{id}_{P \cap L} \ \ \text{for some} \ \ \sigma \in \mathcal{C}, 
\end{equation}
then the index $[T(\mathbb{A}_{K,S}) : \overline{T(K)}^{(S)}]$ divides an explicit constant $\tilde{C}(d , n)$ that depends only on $d = \dim T$ and $n = [L : K]$.  
\end{thm}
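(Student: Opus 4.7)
My plan is to follow the three-stage strategy outlined by the authors in the introduction. First, a standard reduction shows that it suffices to treat the minimal admissible $S$: if $S' \supseteq S$, projecting away the extra components gives a continuous surjection $T(\AF_K(S)) \twoheadrightarrow T(\AF_K(S'))$ that carries $\overline{T(K)}^{(S)}$ into $\overline{T(K)}^{(S')}$, so $[T(\AF_K(S')) : \overline{T(K)}^{(S')}]$ divides $[T(\AF_K(S)) : \overline{T(K)}^{(S)}]$. I therefore fix $S = V^K_{\infty} \cup (\mathcal{P}(L/K, \mathcal{C}) \setminus \mathcal{P}_0)$ once and for all.

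In the first stage I would handle the multiplicative group of a finite separable extension $F/K$, realized as the quasi-split torus $T = \mathrm{R}_{F/K}(\mathbb{G}_m)$, for which $T(K) = F^{\times}$ and $T(\AF_K(S)) \cong \II_F(S_F)$, where $S_F \subset V^F$ consists of the valuations above $S$. The main tool is global class field theory: via the Artin reciprocity map, every proper open subgroup of finite index in $\II_F(S_F)/\overline{F^{\times}}^{(S_F)}$ corresponds to a finite abelian extension $E/F$. The hypothesis (\ref{technicalconditionarbtori}) applied to the splitting field $P$ of $T$ (contained in the Galois closure of $F$), combined with the Chebotarev density theorem, furnishes infinitely many primes of $K$ lying in $\mathcal{P}(L/K, \mathcal{C}) \setminus \mathcal{P}_0$ whose Frobenius acts trivially on $E \cap L$. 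A Hilbert-symbol computation in the spirit of Example \ref{exampleASAindex}, carried out at such primes after lifting to $F$, then forces $E$ to sit inside an explicit abelian extension whose degree is bounded by a function of $n$; this produces Theorem \ref{P:1}. The density-zero set $\mathcal{P}_0$ is harmless because positive Chebotarev density already supplies all the primes needed for the Hilbert-symbol argument inside $\mathcal{P}(L/K, \mathcal{C}) \setminus \mathcal{P}_0$. Taking products over the factors of an arbitrary quasi-split torus then yields Theorem \ref{T:2}.

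For a general $K$-torus $T$ with splitting field $P$, I would present $T$ as a quotient in a short exact sequence $1 \to T_0 \to Q \to T \to 1$ of $K$-tori with $Q$ quasi-split and both $Q, T_0$ split by $P$ --- obtained, for instance, by dualising an embedding of the cocharacter lattice $X_{*}(T)$ into a permutation $\mathrm{Gal}(P/K)$-module. Passing to $K_v$-points and assembling adelically yields an exact sequence in which the failure of surjectivity $Q(K_v) \to T(K_v)$ is precisely $H^1(K_v, T_0)$ by Hilbert $90$ for $Q$. Stage 2 controls $Q(\AF_K(S))/\overline{Q(K)}^{(S)}$, and a diagram chase reduces the bound on $T(\AF_K(S))/\overline{T(K)}^{(S)}$ to bounds on an adelic cohomological defect assembled out of local groups $H^1(K_v, T_0)$ and a global $H^1$ obstruction. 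Nakayama--Tate local duality for tori identifies each $H^1(K_v, T_0)$ with a finite group whose order is uniformly bounded in terms of $d = \dim T$ and $n = [L:K]$; almost all $v$ contribute trivially by unramifiedness, and global Poitou--Tate duality controls how many places can obstruct. Assembling these bounds produces the constant $\tilde{C}(d, n)$.

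The principal difficulty will be Stage 3: the map $Q(\AF_K(S)) \to T(\AF_K(S))$ is not surjective, so the interaction between its cokernel and the closures of rational points must be tracked cohomologically with care. A secondary subtlety is that (\ref{technicalconditionarbtori}) is phrased in terms of $P$, the splitting field of $T$, so the resolution must be chosen to keep $Q$ split by $P$ itself; otherwise the relevant Galois extension enlarges and the resulting bound ceases to depend only on $d$ and $n$.
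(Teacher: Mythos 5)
Your Stages 1--2 follow essentially the paper's route (Existence Theorem of class field theory plus Chebotarev to force the class field inside $FL$, then products of the fields $F_i$), and your choice of resolution $1 \to T_0 \to Q \to T \to 1$ with $Q$ quasi-split and split by $P$ is also the paper's. The genuine gap is in your Stage 3 accounting. You propose to bound the adelic defect by bounding each local group $H^1(K_v , T_0)$ (local duality), asserting that ``almost all $v$ contribute trivially by unramifiedness'' and that Poitou--Tate ``controls how many places can obstruct.'' This fails: for the kernel torus $T_0$ the groups $H^1(K_v , T_0)$ are nontrivial at infinitely many \emph{unramified} places (already for a norm-one torus of a quadratic subfield of $P$, every inert $v$ contributes $\mathbb{Z}/2$); what vanishes for almost all $v$ is only the integral-level obstruction $T(\mathcal{O}_v)/\pi(Q(\mathcal{O}_v))$ (Lang's theorem plus Hensel). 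Consequently the cokernel of $Q(\mathbb{A}_K(S)) \to T(\mathbb{A}_K(S))$ is a quotient of a restricted direct sum of infinitely many nontrivial finite groups and is in general \emph{infinite}; no uniform bound on the local groups, nor a bound on the number of obstructing places, can produce a finite index. Finiteness only appears after you saturate by the global points: one must show that the image of $T(K)$ under the connecting map has bounded finite index in the adelic obstruction group. In the paper this is exactly where the \emph{global} Nakayama--Tate theorem for the idele class group enters: writing $\mathcal{G} = \mathrm{Gal}(P/K)$, the index $[T(\mathbb{A}_K(S)) : T(K)\cdot\pi(Q(\mathbb{A}_K(S)))]$ divides the order of the cokernel of $H^1(\mathcal{G}, T_0(P)) \to H^1(\mathcal{G}, T_0(\mathbb{A}_P(\bar{S})))$, which embeds into $H^1(\mathcal{G}, C_P(T_0)) \simeq H^1(\mathcal{G}, X(T_0))$, a finite group; surjectivity of $T(K) \to H^1(\mathcal{G}, T_0(P))$ uses Hilbert 90 for the quasi-split $Q$, and one also needs openness of $\pi$ on adelic points to identify $\overline{T(K)}^{(S)}$ with $T(K)\cdot\pi(\overline{Q(K)}^{(S)})$. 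Your sketch is missing this ``multiply by $T(K)$ first, then apply the global theorem'' step, and the step you do propose would not give finiteness.

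Two further points. For the constant to depend only on $d$ and $n$ you must also bound $[P:K]$ and $\dim T_0$ in terms of $d$ alone; this is not a defect of the method but a fact you leave unresolved (you flag it as a worry): since $\mathcal{G}$ acts faithfully on $X(T) \simeq \mathbb{Z}^d$, Minkowski's lemma bounds $|\mathcal{G}|$ by an explicit $\gamma(d)$, and choosing $Q = \mathrm{R}_{P/K}(\mathbb{G}_m)^d$ gives $\dim T_0 \le d(\gamma(d)-1)$; combined with an elementary bound on $H^1$ of a finite group acting on a lattice this yields $\tilde{C}(d,n) = n^d\cdot\psi(\lambda(d))$. Finally, a minor slip in the construction: dualising an embedding of $X_*(T)$ into a permutation module produces a surjection of a permutation module onto $X(T)$, i.e.\ an embedding of $T$ into a quasi-split torus, not the quotient presentation you need; the correct move is a surjection of a permutation module onto $X_*(T)$ (equivalently, an embedding of $X(T)$ into a permutation module with torsion-free cokernel), as in the paper.
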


As we will see in the next section after Proposition \ref{P:1000}, tractable sets $S$ as in the statement do satisfy the condition $\mathfrak{d}_K(S \cap \mathrm{Spl}(P/K)) > 0$.

\section{Proof of Theorem \ref{T:1}:  quasi-split tori}\label{ASAquasisplit}

We recall that a $K$-torus $T$ is called {\it quasi-split} if it is isomorphic to a finite product of $K$-tori of the form $\mathrm{R}_{F/K}(\mathbb{G}_m)$ where $F/K$ is a finite separable extension. The goal of this section is to prove 
Theorem \ref{T:1} for such tori with a very explicit constant in the case of tractable sets -- see Theorem \ref{T:2}. 

First, we treat the case of $T = \mathrm{R}_{F/K}(\mathbb{G}_m)$. For a subset $S \subset V^K$ we let $\bar{S} \subset V^F$ denote the set of all extensions of valuations $v \in S$ to $F$. We recall that for any $K$-algebra $B$ there is a natural isomorphism 
$$
T(B) \simeq (B \otimes_K F)^{\times}
$$
(cf. \cite[A.5]{CGP}). Combining this with the natural isomorphism $\mathbb{A}_K \otimes_K F \simeq \mathbb{A}_F$ (cf. \cite[Ch. 2, \S14]{ANT}), which induces an isomorphism $\mathbb{A}_{K,S} \otimes_K F \simeq \mathbb{A}_{F,\bar{S}}$, we obtain 
compatible isomorphisms
$$ 
T(K) \simeq F^{\times} \ \ \text{and} \ \ T(\mathbb{A}_{K,S}) \simeq \mathbb{I}_{F,\bar{S}},
$$
leading to an isomorphism 
$$
T(\mathbb{A}_{K,S}) / \overline{T(K)}^{(S)} \ \simeq \ \mathbb{I}_{F,\bar{S}} / \overline{F^{\times}}^{(\bar{S})}, 
$$
where $\overline{F^{\times}}^{(\bar{S})}$ is the closure of (diagonally embedded) $F^{\times}$ in $\mathbb{I}_{F,\bar{S}}$. The index $[\mathbb{I}_{F,\bar{S}} : \overline{F^{\times}}^{(\bar{S})}]$ is analyzed in Propositions \ref{P:1}-\ref{P:2000}, the first of which generalizes \cite[Proposition 5.1]{PrRa} which treats the case where $\mathcal{P}_0$ is finite.  
\begin{prop}\label{P:1}
Let $F$ be a finite separable extension of a global field $K$, and let $S \subset V^K$ be a tractable subset containing a set of the form $V^K_{\infty} \cup (\mathcal{P}(L/K , \mathcal{C}) \setminus \mathcal{P}_0)$ where $\mathcal{P}_0$ has Dirichlet density zero. Assume that there exists $\sigma \in \mathcal{C}$ such that 
\begin{equation}\label{technicalcondition}
\sigma \vert {(F \cap L)}=\mathrm{id}_{F\cap L}. 
\end{equation}
Then the index $[\mathbb{I}_{F,\bar{S}} :\overline{F^{\times}}^{(\bar{S})}]$ is finite and divides $n = [L:K]$.
\end{prop}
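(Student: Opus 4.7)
The plan is to dualize via class field theory. By Artin reciprocity for $F$, continuous finite-order characters of $\II_F/F^\times$ correspond to finite abelian extensions of $F$, and such a character would factor through the quotient $\II_F(\bar{S})/\overline{F^\times}^{(\bar{S})}$ exactly when it is trivial on each local factor $F_w^\times$ for $w \in \bar{S}$ — which translates into saying that every $w \in \bar{S}$ splits completely in the cut-out extension. Thus the index $[\II_F(\bar{S}) : \overline{F^\times}^{(\bar{S})}]$ should equal $[M:F]$, where $M$ is the compositum (a priori potentially infinite) of all finite abelian extensions of $F$ in which every $w \in \bar{S}$ splits completely. The proof then reduces to showing $M \subseteq FL$, because then $[M:F]$ divides $[FL:F] = [L : F \cap L]$, which in turn divides $n = [L:K]$.

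To prove $M \subseteq FL$, I would fix a finite abelian extension $M_0/F$ with all $w \in \bar{S}$ splitting, pick a finite Galois extension $\tilde{L}/K$ containing $M_0 L$, and set $G = \Gal(\tilde{L}/K)$, $H = \Gal(\tilde{L}/F)$, $K_L = \Gal(\tilde{L}/L)$, $N = \Gal(\tilde{L}/M_0)$. Since $\Gal(\tilde{L}/FL) = H \cap K_L$ and the image of $H$ in $G/K_L = \Gal(L/K)$ is $\Gal(L/F\cap L)$, the inclusion $M_0 \subseteq FL$ is equivalent to $H \cap K_L \subseteq N$. Using the standard description of the places of $F$ above an unramified $v$ in terms of the double cosets $H \backslash G / \langle \phi_v \rangle$, the condition that every $w | v$ in $F$ splits completely in $M_0$ translates into the purely group-theoretic statement: for every $G$-conjugate $\phi'$ of $\phi_v$, $\langle \phi'\rangle \cap H \subseteq N$.

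Next I would apply Chebotarev. For each $G$-conjugacy class $\mathfrak{C}$ contained in the preimage of $\mc{C}$ under $G \twoheadrightarrow G/K_L = \Gal(L/K)$, Chebotarev guarantees that a positive Dirichlet density of unramified $v \in V^K$ have Frobenius in $\mathfrak{C}$; since $\mc{P}_0$ together with the finitely many primes ramified in $\tilde{L}$ forms a density-zero set, such $v$'s still exist in $\mc{P}(L/K,\mc{C}) \setminus \mc{P}_0 \subseteq S$, and by hypothesis every $w | v$ in $F$ then splits completely in $M_0$. By the previous paragraph this yields $\langle \phi' \rangle \cap H \subseteq N$ for every $\phi' \in G$ with $\phi' K_L \in \mc{C}$. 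To finish, I would invoke the hypothesis (\ref{technicalcondition}): since the chosen $\sigma \in \mc{C}$ fixes $F \cap L$ pointwise, it lies in $\Gal(L/F \cap L)$, i.e. in the image of $H$ in $G/K_L$, so $\sigma$ lifts to some $\phi \in H$. Applying the group-theoretic condition to $\phi' = \phi$ forces $\phi \in N$ (as $\langle \phi \rangle \subseteq H$); applying it to $\phi' = \phi\tau$ for arbitrary $\tau \in H \cap K_L$ — still in $H$, still projecting to $\sigma$ — forces $\phi\tau \in N$ and hence $\tau = \phi^{-1}(\phi\tau) \in N$. Thus $H \cap K_L \subseteq N$, so $M_0 \subseteq FL$, and taking the union over $M_0$ gives $M \subseteq FL$.

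The main obstacle I anticipate is making the first step fully precise: one must justify, in the topological adelic setting, that the quotient $\II_F(\bar{S})/\overline{F^\times}^{(\bar{S})}$ really is Pontryagin-dual to $\Gal(M/F)$ and has order $[M:F]$, and that its a priori potential non-compactness causes no trouble (finiteness will follow a posteriori from the Galois-theoretic bound). The Chebotarev step is routine, since positive-density conjugacy classes survive the density-zero exclusion, and the concluding Galois-theoretic manipulation is elementary once the framework is in place.
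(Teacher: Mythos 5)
Your proposal is correct and follows essentially the same route as the paper: class field theory reduces everything to showing that any finite abelian extension of $F$ in which every $w \in \bar{S}$ splits completely is contained in $FL$ (so its degree divides $[FL:F] = [L:F\cap L] \mid n$), and Chebotarev combined with condition (\ref{technicalcondition}) yields that inclusion — your direct group-theoretic deduction with the lift $\phi\in H$ of $\sigma$ and the elements $\phi\tau$, $\tau\in H\cap K_L$, is just the contrapositive form of the paper's argument, which picks a single $\tau\in\mathrm{Gal}(E/K)$ restricting to a lift of $\sigma$ on $FL$ but nontrivially on $P$ and derives a contradiction via the Artin map. The one point you flag as the main obstacle — that $\mathbb{I}_F(\bar{S})/\overline{F^{\times}}^{(\bar{S})}$ is profinite, so it is indeed controlled by its finite quotients (equivalently by the open subgroups $B\supseteq F^{\times}$, which the paper converts into abelian extensions via the Existence Theorem rather than by dualizing) — is exactly what the paper handles by citing the proof of Proposition 5.1 of \cite{PrRa}, and it is a standard fact since $\bar{S}\supseteq V^F_{\infty}$ makes the quotient compact and totally disconnected.
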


\begin{proof}
It is observed in the proof of Proposition 5.1 in \cite{PrRa} that the quotient $\mathbb{I}_{F,\bar{S}}/\overline{F^\times}^{(\bar{S})}$ is a profinite group, and consequently 
$$
\overline{F^{\times}}^{(\bar{S})} = \bigcap_{B\in\mathcal{B}} B,
$$
where $\mathcal{B}$ denotes the family of all open subgroups of $\mathbb{I}_{F,\bar{S}}$ that contain $F^{\times}$. Every such subgroup $B$ is automatically of finite index,
and it is enough to show that

\vskip1.5mm
\begin{equation*}
\text{for every $B\in\mathcal{B}$, the index }[\mathbb{I}_{F,\bar{S}} : B] \text{ divides } m = [FL :F]\tag{$\star$}
\end{equation*}
\vskip1.5mm

\noindent 

\noindent as obviously $m \vert n$. Let $\pi_{\bar{S}} \colon \mathbb{I}_F \to \mathbb{I}_{F,\bar{S}}$ be the natural projection, and set $M := \pi_{\bar{S}}^{-1}(B)$. Then $M$ is an open subgroup of $\mathbb{I}_F$ containing $F^{\times}$ and having index $[\mathbb{I}_F : M] = [\mathbb{I}_{F,\bar{S}} : B]$. By the  Existence Theorem of global class field theory (cf. \cite[Ch. VII, Theorem 5.1(D)]{ANT}) there is an abelian extension $P/F$ such that $M= N_{P/F}(\mathbb{I}_P) F^{\times}$. On the other hand, due to the fundamental isomorphism of global class field theory (see \cite[p. 197]{ANT}), the index $[\mathbb{I}_F : N]$ of the norm subgroup $N :=  N_{FL/F}(\mathbb{I}_{FL}) F^{\times}$ divides $[FL : F] = m$. 
Thus, it is enough to prove the inclusion $P \subset FL$ as then $N \subset M$. 

Suppose that $P \not\subset FL$. Pick $\sigma \in \mathcal{C}$ that satisfies (\ref{technicalcondition}), and using the canonical isomorphism of Galois groups
$\mathrm{Gal}(FL/F) \simeq \mathrm{Gal}(L/(F \cap L))$, find $\tilde{\sigma} \in \mathrm{Gal}(FL/F)$ such that $\tilde{\sigma} \vert L = \sigma$. Let $E$ be a finite Galois extension of $K$ that contains $F$, $L$ and $P$.  We claim that there exists $\tau \in \mathrm{Gal}(E/K)$ such that
\begin{equation}\label{E:restr}
\tau \vert FL = \tilde{\sigma} \ \ \text{and} \ \ \tau \vert P \neq \mathrm{id}_P.
\end{equation}
Indeed, otherwise the set of all $\tau \in \mathrm{Gal}(E/K)$ satisfying $\tau \vert FL = \tilde{\sigma}$, which is a right coset of the subgroup $\mathrm{Gal}(E/FL)$, would be contained in $\mathrm{Gal}(E/P)$. This would imply the inclusion $\mathrm{Gal}(E/FL) \subset \mathrm{Gal}(E/P)$ yielding the inclusion $P \subset FL$ that contradicts our original assumption.  

So, fix $\tau \in \mathrm{Gal}(E/K)$ satisfying (\ref{E:restr}). By Chebotarev's Density Theorem (cf. \cite[Ch. VII, \S 2]{ANT}), the set of $v \in V^K_f$ that are unramified in $E$ and admit an extension $w \in V^E_f$ such that $\mathrm{Fr}_{E/K}(w | v) = \tau$ has positive Dirichlet density. Since $\mathfrak{d}_K(\mathcal{P}_0) = 0$ by our assumption, such a $v$ can actually be found outside of $\mathcal{P}_0$; we then let $w$ denote an extension of $v$ as above. 
The fact that $\tau \vert L = \sigma$ implies that $v \in S$, placing $u' := w \vert F$ in $\bar{S}$. Furthermore, since $\tau$ generates $\mathrm{Gal}(E_w/K_v)$, the facts that $\tau \vert F = \mathrm{id}_F$ and $\tau \vert P \neq \mathrm{id}_P$ mean that $F_{u'} = K_v$ while for $u'' := w \vert P$ we have $P_{u''} \neq F_{u'}$ (note that $u'' \vert u'$). On the other hand, since $u' \in \bar{S}$, it follows from the construction of $M$ that we have the inclusion $F_{u'}^{\times} \subset M$. But $M$ coincides with the kernel of the Artin map $\psi_{P/F} \colon \mathbb{I}_F \to \mathrm{Gal}(P/F)$, so the restriction of $\psi_{P/F}$ to $F_{u'}^{\times} \subset \mathbb{I}_F$ is trivial. Since by construction of the Artin map we have 
$$\psi_{P/F}(F_{u'}^{\times}) = \langle \mathrm{Fr}_{P/F}( u'' \vert u') \rangle = \mathrm{Gal}(P_{u''}/F_{u'}),$$ 
we obtain that $P_{u''} = F_{u'}$, a contradiction. Thus, $P \subset FL$, completing the argument. 
\end{proof}

As we already mentioned in the introduction, the preprint \cite{CW} by Y.~Cao and Y.~Wang has led 
to the following modification of Proposition \ref{P:1}. 
\begin{prop}\label{P:1000}
Let $F/K$ be a finite (separable) extension, $S \subset V^K$ be a subset containing $V^K_{\infty}$, and assume that there exists a subset $S_0 \subset S \cap \mathrm{Spl}(F/K)$ with $\mathfrak{d}_K(S_0) > 0$. Then the index 
$$
[\mathbb{I}_{F,\bar{S}} : \overline{F^{\times}}^{(\bar{S})}] \leq \frac{1}{[F : K] \cdot \mathfrak{d}_K(S_0)} := d, 
$$ 
hence in particular is finite. 
\end{prop}
\begin{proof}
As in the proof of Proposition \ref{P:1}, it is enough to show that for any open subgroup $B$ of $\mathbb{I}_{F,\bar{S}}$ containing $F^{\times}$, the index $[\mathbb{I}_{F,\bar{S}} : B]$, which is automatically finite, is $\leq d$. Again, we let $\pi_{\bar{S}} \colon \mathbb{I}_F \to \mathbb{I}_{F,\bar{S}}$ denote the natural projection, introduce the open subgroup  $M := \pi_{\bar{S}}^{-1}(B)$ of $\mathbb{I}_F$, for which we have $[\mathbb{I}_F : M] = [\mathbb{I}_{F,\bar{S}} : B]$, and then consider the finite abelian extension $P/F$, provided by the Existence Theorem, satisfying $N_{P/F}(\mathbb{I}_P)F^{\times} = M$ (of course, $[P : F] = [\mathbb{I}_F : M]$). By construction, $F_w^{\times} \subset M$ for all $w \in  \bar{S}$.    As in the proof of Proposition \ref{P:1}, the construction of Artin's map immediately gives the inclusion 
$\bar{S} \setminus (V^F_{\infty} \cup S_r) \subset \mathrm{Spl}(P/F)$ where $S_r \subset V^F_f$ is the finite set of valuations that ramify in $P$. In particular, for the set $\bar{S}_0$ of all extensions of valuations from $S_0$ to $F$ we have 
\begin{equation}\label{E:XXXX31}
\bar{S}_0 \setminus S_r \subset \mathrm{Spl}(P/F). 
\end{equation}
It is easy to see  that 
\begin{equation}\label{E:XXXX2}
\mathfrak{d}_F(\bar{S}_0) = [F : K] \cdot \mathfrak{d}_K(S_0). 
\end{equation}
For the reader's convenience, we will sketch the argument. First, we recall  that the  Dirichlet density $\mathfrak{d}_K(\mathcal{P})$ of a subset $\mathcal{P} \subset V^K_f$ is defined to be the limit 
$$
\lim_{s \to 1+} \frac{\displaystyle \sum_{v \in \mathcal{P}} \frac{1}{(\mathrm{N}v)^s}}{\displaystyle \sum_{v \in V^K_f} \frac{1}{(\mathrm{N}v)^s}}, 
$$ 
where $\mathrm{N}v$ denotes the cardinality of the residue field of $v$, if this limit exists (cf. \cite[Ch. VII, \S 13]{Neu}). Dirichlet density $\mathfrak{d}_F$ is defined similarly.


Each $v \in S_0$ has precisely $[F : K]$ extensions $w \in \bar{S}_0$, and for each extension we have $F_w = K_v$, hence $\mathrm{N}w = \mathrm{N}v$.
It follows that 
\begin{equation}\label{E:XXXX3}
\sum_{w \in \bar{S}_0} \frac{1}{(\mathrm{N}w)^s}
=
[F : K] \cdot
\sum_{v \in S_0} \frac{1}{(\mathrm{N} v)^s}.
\end{equation}
On the other hand, a standard argument using Euler products shows that for the zeta-functions $\zeta_K(s)$ and $\zeta_F(s)$ of $K$ and $F$ respectively, the quantities 
$$
\log \zeta_K(s) - \sum_{v \in V^K_f} \frac{1}{(\mathrm{N}v)^s} \ \ \text{and} \ \ \log \zeta_F(s) - \sum_{w \in V^F_f} \frac{1}{(\mathrm{N}w)^s} 
$$
remain bounded as $s \to 1+$ (cf. \cite[Ch. VII, \S 13]{Neu}). Since both $\zeta_K(s)$ and $\zeta_F(s)$ have a simple pole at $s = 1$ (cf. \cite[Ch. VII, Cor. 5.11]{Neu}), the difference $\log \zeta_F(s) - \log \zeta_K(s)$ remains bounded in a neighborhood of $s = 1$, and therefore the ratio of $\sum_{w \in V^F_f} \frac{1}{(\mathrm{N}w)^s}$ to $\sum_{v \in V^K_f} \frac{1}{(\mathrm{N}v)^s}$ tends to $1$ as $s \to 1+$, and (\ref{E:XXXX2}) follows from the definition of $\mathfrak{d}_F$.

On the other hand, since $\mathfrak{d}_F(\mathrm{Spl}(P/F)) = 1/[P: F]$ by Chebotarev Density Theorem, we obtain from (\ref{E:XXXX31}) that 
$$
\mathfrak{d}_F(\bar{S}_0)  \leq \frac{1}{[P : F]} = \frac{1}{[\mathbb{I}_{F,\bar{S}} : B]},    
$$
and the required fact follows. 

%
\end{proof}


Of course, Proposition \ref{P:1000} allows for sets $S$ that are more general than arithmetic progressions, but let us now see how Propositions \ref{P:1} and \ref{P:1000} relate for generalized arithmetic progressions. So, let $S = V^K_{\infty} \cup \mathcal{P}(L/K , \mathcal{C})$, and assume first that $F/K$ is a finite {\it Galois extension}. In this case, if {\it one} $\sigma \in \mathcal{C}$ satisfies (\ref{technicalcondition}) then {\it all} $\sigma \in \mathcal{C}$ satisfy this condition. Furthermore, (\ref{technicalcondition}) implies that an arbitrary $\sigma \in \mathcal{C}$ extends uniquely to $\tilde{\sigma} \in \mathrm{Gal}(FL/F) \subset \mathrm{Gal}(FL/K)$, and we let $\tilde{\mathcal{C}}$ denote the conjugacy class of $\tilde{\sigma}$ in $\mathrm{Gal}(FL/K)$ (note that this conjugacy class does not depend on the choice of $\sigma$, and in fact the restriction $\mathrm{Gal}(FL/F) \to \mathrm{Gal}(L/K)$ yields a bijection $\tilde{\mathcal{C}} \to \mathcal{C}$). It is easy to see that 
$$
\mathcal{P}(L/K , \mathcal{C}) \cap \mathrm{Spl}(F/K) = \mathcal{P}(FL/K , \tilde{\mathcal{C}}), 
$$   
so applying Chebotarev's Theorem we obtain that 
$$
\mathfrak{d}_K(\mathcal{P}(FL/K , \tilde{\mathcal{C}})) = \frac{\vert \tilde{\mathcal{C}} \vert}{[FL : K]} = \frac{\vert \mathcal{C} \vert}{[FL : K]}.  
$$ 
Now Proposition \ref{P:1000} yields the following estimate: 
$$
\mathbf{i} := [\mathbb{I}_{F,\bar{S}} : \overline{F^{\times}}^{(\bar{S})}] \leq \left( \frac{\vert \mathcal{C} \vert}{[FL : K]} \cdot [F : K]   \right)^{-1} = \frac{[FL : F]}{\vert \mathcal{C} \vert}.  
$$
At the same time, the proof of Proposition \ref{P:1} shows that $\mathbf{i}$ divides $[FL : F]$. So, in the Galois case Proposition \ref{P:1000} subsumes Proposition \ref{P:1} and provides a better estimate of the index of the closure, although this estimate does not have to be an integer let alone a multiple of the index (as in Proposition \ref{P:1}). 

On the other hand, let us consider a non-Galois extension $F = \mathbb{Q}(\sqrt[3]{2})$ of $K = \mathbb{Q}$, and take $S = V^K_{\infty} \cup \mathcal{P}(L/K , \mathcal{C})$ where $L = \mathbb{Q}(\zeta_3)$ and $\mathcal{C} = \{ \sigma \}$ with $\sigma$ being the nontrivial automorphism of $L/K$. The normal closure of $F$ over $K$ is $E = \mathbb{Q}(\zeta_3 , \sqrt[3]{2})$, which contains $L$. Then $\mathrm{Spl}(F/K) = \mathrm{Spl}(E/K)$ is disjoint from $S$, and therefore Proposition \ref{P:1000} does not apply. However, Proposition \ref{P:1} does apply yielding that $[\mathbb{I}_{F,\bar{S}} : \overline{F^{\times}}^{(\bar{S})}]$ divides $2$. Thus, in the non-Galois case, Proposition \ref{P:1000} does not subsume Proposition \ref{P:1}.   

\vskip2mm 

So, while Proposition \ref{P:1000} is sufficient to prove our main results, we would like to point out a generalization of Proposition \ref{P:1000} which subsumes Proposition \ref{P:1} in all cases, 
at least qualitatively. 
For a finite (separable) extension $F/K$, we let $\mathrm{Spl}_0(F/K)$ denote the set of $v \in V^K_f$ that admit {\it at least one} extension $w$ with $F_w = K_v$, noting that $\mathrm{Spl}_0(F/K) \supset \mathrm{Spl}(F/K)$, and the two sets are equal if $F/K$ is a Galois extension.  
\begin{prop}\label{P:2000}
Let $F/K$ be a finite (separable) extension, and let $S \subset V^K$ be a set of valuations containing $V^K_{\infty}$ and such that there exists a subset $S_0 \subset S \cap \mathrm{Spl}_0(F/K)$ with 
$\mathfrak{d}_K(S_0) > 0$. Then the index 
$$
[\mathbb{I}_{F,\bar{S}} : \overline{F^{\times}}^{(\bar{S})}] \leq \frac{1}{\mathfrak{d}_K(S_0)} =: \tilde{d}, 
$$
and in particular is finite. 
\end{prop}
\begin{proof}
For each $v \in S_0$ we pick {\it exactly one} extension $w \in \bar{S}$ satisfying $F_w = K_v$, and let $\tilde{S}_0$ denote the set of such extensions for all $v \in S_0$. Then 
\begin{equation}\label{E:XXXX5}
\mathfrak{d}_F(\tilde{S}_0) = \mathfrak{d}_K(S_0)  
\end{equation}
as
$$
\sum_{w \in \tilde{S}_0} \frac{1}{(\mathrm{N}w)^s} = \sum_{v \in S_0} \frac{1}{(\mathrm{N}v)^s} 
$$
(see the proof of Proposition \ref{P:1000} for details). To complete the argument, let us again show that for any open subgroup $B \subset \mathbb{I}_{F,\bar{S}}$ containing $F^{\times}$ we have $[\mathbb{I}_{F,\bar{S}} : B] \leq \tilde{d}$. Continuing with the notations from the proof of Proposition \ref{P:1000}, we set $M = \pi_{\bar{S}}^{-1}(B)$, and let $P/F$ be the abelian extension for which $N_{P/F}(\mathbb{I}_P) F^{\times} = M$. Then $F_w^{\times} \subset M$ for all $w \in \bar{S}$, and therefore 
$$
\tilde{S}_0 \setminus S_r \subset \bar{S} \setminus (V^F_{\infty} \cup S_r) \subset \mathrm{Spl}(P/F), 
$$
where $S_r$ is the finite set consisting of $w \in V^F_f$ that ramified in $P$. So, 
$$
\mathfrak{d}_F(\tilde{S}_0 \setminus S_r) \leq \frac{1}{[P : F]} = \frac{1}{[\mathbb{I}_{F,\bar{S}} : B]},  
$$
and our claim follows from (\ref{E:XXXX5}).  
\end{proof}

Finally, we observe that if (\ref{technicalcondition}) holds, then $\sigma$ lifts to $\tilde{\sigma} \in \mathrm{Gal}(FL/F)$ and eventually to $\tau \in \mathrm{Gal}(E/F)$ where $E$ is the normal closure of $FL$ over $K$. Let $\mathcal{T}$ be the conjugacy class of $\tau$ in $\mathrm{Gal}(E/K)$. Then any $v \in \mathcal{P}(E/K , \mathcal{T}) =: S_0$ lies in $\mathcal{P}(L/K , \mathcal{C}) \cap \mathrm{Spl}_0(F/K)$. So, for $S = V^K_{\infty} \cup \mathcal{P}(L/K , \mathcal{C})$ the intersection $S \cap \mathrm{Spl}_0(F/K)$ contains the set $S_0$ which has positive density. 
Thus, the finiteness assertion in Proposition \ref{P:1} follows from Proposition \ref{P:2000}. 

\vskip2mm 

The discussion preceding Proposition \ref{P:1} entails that the assertions of Propositions \ref{P:1} and \ref{P:1000} can be restated in terms of the torus $T = \mathrm{R}_{F/K}(\mathbb{G}_m)$ (note that the minimal splitting field $P$ of $T$ coincides with the normal closure of $F$ over $K$, and hence $\mathrm{Spl}(P/K) = \mathrm{Spl}(F/K)$). Let $S \subset V^K$ be a subset containing $V^K_{\infty}$. Then 

\vskip2mm 

 $\bullet$ \parbox[t]{16cm}{if there exists $S_0 \subset S \cap \mathrm{Spl}(P/K)$ with $\mathfrak{d}_K(S_0) > 0$ then $$[T(\mathbb{A}_{K,S}) : \overline{T(K)}^{(S)}] \leq \frac{1}{[F : K] \cdot \mathfrak{d}_K(S_0)};$$}

\vskip1mm 

 $\bullet$ \parbox[t]{16cm}{if $S$ is a tractable set containing a subset of the form $V^K_{\infty} \cup (\mathcal{P}(L/K , \mathcal{C}) \setminus \mathcal{P}_0)$ where $\mathcal{P}_0$ has Dirichlet density zero and there exists $\sigma \in \mathcal{C}$ such that $\sigma \vert (P \cap L) = \mathrm{id}_{P \cap L}$ then the index $[T(\mathbb{A}_{K,S}) : \overline{T(K)}^{(S)}]$ divides $n = [L : K]$. }

\vskip2mm 

\noindent We will now extend these facts to arbitrary quasi-split tori.

\begin{thm}\label{T:2}
Let $T = \mathrm{R}_{F_1/K}(\mathbb{G}_m) \times \ldots \times \mathrm{R}_{F_r/K}(\mathbb{G}_m)$, where $F_1, \ldots , F_r$ are finite separable extensions of a global field $K$, be a quasi-split $K$-torus having dimension $d$ and the minimal splitting field $P$, and let $S \subset V^K$. If $S$ contains $V_\infty^K$ and $\mathfrak{d}_K(S \cap \mathrm{Spl}(P/K)) > 0$, then $T$ has almost strong approximation with respect to $S$. 

Furthermore, if $S$ is a tractable subset containing a set of the form $V_\infty^K\cup (\mathcal{P}(L/K , \mc{C}) \setminus \mathcal{P}_0)$ where $\mathcal{P}_0$ has Dirichlet density zero, and there exists $\sigma \in \mathcal{C}$ such that 
\begin{equation}\label{E:AAA10}
\sigma \vert (P \cap L) = \mathrm{id}_{P \cap L}, 
\end{equation}
then the index $[T(\mathbb{A}_{K,S}) : \overline{T(K)}^{(S)}]$ divides $n^r$, hence also $n^d$ where $n = [L : K]$.
\end{thm}
\begin{proof}
Let  $T_i = \mathrm{R}_{F_i/K}(\mathbb{G}_m)$ so that $T = T_1 \times \cdots \times T_r$. Then $P$ coincides with the compositum $P_1 \cdots P_r$ where $P_i$ is the minimal splitting field of $T_i$. Furthermore, we have an isomorphism 
$$
T(\mathbb{A}_{K,S}) / \overline{T(K)}^{(S)} \simeq T_1(\mathbb{A}_{K,S}) / \overline{T_1(K)}^{(S)} \times \cdots \times T_r(\mathbb{A}_{K,S}) / \overline{T_r(K)}^{(S)}. 
$$
Clearly, $S_0 := S \cap \mathrm{Spl}(P/K) \subset S \cap \mathrm{Spl}(P_i/K)$ for each $i = 1, \ldots , r$, so our earlier results imply that the index 
$$
[T(\mathbb{A}_{K,S}) : \overline{T(K)}^{(S)}] \leq \frac{\mathfrak{d}_K(S \cap \mathrm{Spl}(P/K))^{-r}}{\prod_{i=1}^r [F_i : K]} \leq \mathfrak{d}_K(S \cap \mathrm{Spl}(P/K))^{-d} 
$$
(as $r \leq d$), and in particular is finite. 

Now, if $S$ is a tractable set as in the statement of the theorem then (\ref{E:AAA10}) implies that $\sigma \vert (P_i \cap L) = \mathrm{id}_{P_i \cap L}$ for each $i$. Then $[T_i(\mathbb{A}_{K,S}) : \overline{T_i(K)}^{(S)}]$ divides $n$ for each $i$, and therefore $[T(\mathbb{A}_{K,S}) : \overline{T(K)}^{(S)}]$ divides $n^r$, hence also $n^d$.  
\end{proof}

\section{Some cohomological computations}\label{BoundH1} 

The passage from quasi-split to arbitrary tori in the proof of Theorem \ref{T:1} requires some auxilliary results on Galois cohomology. More specifically, the arithmetic result that we will use directly in \S \ref{ASAtori} will be formulated as Corollary \ref{C:theta} after introducing the necessary notations. We begin, however, with an elementary estimate that applies to tori over arbitrary fields.  

Let $T$ be a torus defined over a field $K$, let $K_T$ be the minimal splitting field of $T$, and let $\mathcal{G}_T = \mathrm{Gal}(K_T/K)$ be the corresponding Galois group. Then $\mathcal{G}_T$ naturally acts on the characters of $T$ making the character group $X(T)$ into a $\mathcal{G}_T$-module (cf. \cite[\S 8]{Bo}, \cite[\S2.1.7]{PlRa2}). We will also say that an integral-valued function $\psi(d)$ defined on integers $d \geq 1$ is {\it super-increasing} if for any $1 \leq d_1 \leq d_2$ we have $\psi(d_1) \vert \psi(d_2)$.   

\begin{prop}\label{boundH^1torus}
There exists an explicit positive integral-valued super-increasing function $\psi(d)$ defined on integers $d \geq 1$ such that for any $d$-dimensional torus $T$, the group $H^1(\mathcal{G}_T , X(T))$ is finite of order dividing $\psi(d)$ -- see (\ref{E:Psi-Expl}) for an explicit expression of $\psi$.
\end{prop}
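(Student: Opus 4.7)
The plan is to exploit the fact that $P$ is the \emph{minimal} splitting field of $T$, which forces the Galois action of $\mathcal{G} = \mathrm{Gal}(P/K)$ on $X(T)$ to be faithful. Consequently, choosing a $\mathbb{Z}$-basis of $X(T) \cong \mathbb{Z}^d$ yields an injective homomorphism $\mathcal{G} \hookrightarrow \mathrm{GL}_d(\mathbb{Z})$. By Minkowski's theorem on finite subgroups of $\mathrm{GL}_d(\mathbb{Z})$, the order of any such subgroup divides an explicit integer $M(d)$ (for instance, one may take Minkowski's bound $M(d) = \prod_{p} p^{\lfloor d/(p-1)\rfloor + \lfloor d/(p(p-1))\rfloor + \cdots}$, which is known to be sharp). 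Thus $|\mathcal{G}| \leq M(d)$.

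Next I would combine two standard facts about cohomology of finite groups acting on finitely generated abelian groups. First, since $|\mathcal{G}|$ is finite, $H^1(\mathcal{G}, X(T))$ is annihilated by $|\mathcal{G}|$. Second, writing $C^1 = \mathrm{Maps}(\mathcal{G}, X(T))$ for the group of $1$-cochains, one has $\mathrm{rk}_{\mathbb{Z}}\, C^1 = d \cdot |\mathcal{G}|$, so the subgroup $Z^1(\mathcal{G}, X(T)) \subset C^1$ of $1$-cocycles has $\mathbb{Z}$-rank at most $d \cdot |\mathcal{G}|$. Therefore $H^1(\mathcal{G}, X(T))$, being a quotient of $Z^1(\mathcal{G}, X(T))$ that is annihilated by $|\mathcal{G}|$, is a finite abelian group whose order divides $|\mathcal{G}|^{d \cdot |\mathcal{G}|}$. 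Using the Minkowski bound, this order divides $N(d) := M(d)^{d \cdot M(d)}$.

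Finally, to ensure the bounding function is super-increasing in the sense required, I would set
\begin{equation*}
\psi(d) := \mathrm{lcm}\bigl(N(1), N(2), \ldots, N(d)\bigr),
\end{equation*}
so that $\psi(d_1) \mid \psi(d_2)$ whenever $d_1 \leq d_2$, while still $|H^1(\mathcal{G}, X(T))| \mid \psi(d)$ for every $d$-dimensional $K$-torus $T$. The function $\psi$ is explicit because both $M(d)$ and the above lcm are explicit.

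The only nontrivial ingredient here is Minkowski's theorem bounding orders of finite subgroups of $\mathrm{GL}_d(\mathbb{Z})$; the rest is bookkeeping about cocycle and coboundary groups together with the elementary annihilator argument $|\mathcal{G}| \cdot H^i(\mathcal{G}, -) = 0$. Thus the main (and essentially only) conceptual step is the faithfulness of the $\mathcal{G}$-action on $X(T)$ coupled with Minkowski's bound; everything else is routine.
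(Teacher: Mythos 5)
Your proposal is correct and follows essentially the same route as the paper's proof: faithfulness of the $\mathcal{G}$-action on $X(T)$ from minimality of $P$, a Minkowski-type bound on finite subgroups of $\mathrm{GL}_d(\mathbb{Z})$, and the elementary cocycle-counting argument using that $|\mathcal{G}|$ annihilates $H^1(\mathcal{G},X(T))$. The only differences are cosmetic: the paper uses the nonoptimal bound $\gamma(d)=\vert\mathrm{GL}_d(\mathbb{Z}/3\mathbb{Z})\vert$ (whose super-increasing property is automatic) where you invoke the sharp Minkowski bound and then force super-increasingness by taking an lcm.
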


We begin with the following. 

\begin{lemma}\label{estimate} Let $G$ be a finite group of order $s$ and let $A$ be a $G$-module which can be generated by $r$ elements as an abelian group. Then $H^1(G,A)$ is finite of order dividing $s^{r(s-1)}$.
\end{lemma}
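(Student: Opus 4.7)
The plan is to exploit the classical fact that multiplication by $|G|=s$ annihilates $H^1(G,A)$, which reduces everything to a finite count inside the $s$-torsion. Since $s\cdot H^1(G,A) = 0$, we have $sZ^1(G,A) \subseteq B^1(G,A)$, so $H^1(G,A)$ is a quotient of $Z^1(G,A)/sZ^1(G,A)$; it therefore suffices to show that the order of the latter divides $s^{r(s-1)}$ (this will simultaneously establish finiteness).

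To analyze $Z^1(G,A)$ I would use the standard embedding $Z^1(G,A) \hookrightarrow A^{s-1}$ given by evaluating cocycles on $G\setminus\{1\}$ (recall that the cocycle identity forces $f(1)=0$). By the structure theorem for finitely generated abelian groups, $A\cong \mathbb{Z}^a \oplus T$ with $T = \mathbb{Z}/n_1\oplus\cdots\oplus \mathbb{Z}/n_k$ in invariant factor form, and $a+k\leq r$ since $A$ is generated by $r$ elements. Combining these, $Z^1(G,A) \cong \mathbb{Z}^b \oplus T_1$ where $b\leq a(s-1)$ (from the $\mathbb{Z}$-rank of $A^{s-1}$) and $T_1$ embeds into $T^{s-1}$.

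Putting the pieces together, $|Z^1/sZ^1| = s^b \cdot |T_1/sT_1|$, and for the finite abelian group $T_1$ one has $|T_1/sT_1| = |T_1[s]|$; by Lagrange this divides $|T^{s-1}[s]| = |T[s]|^{s-1}$, and since $|T[s]| = \prod_i \gcd(s,n_i)$ divides $s^k$, we conclude that $|Z^1/sZ^1|$ divides $s^{(a+k)(s-1)}$, hence $s^{r(s-1)}$. The argument is elementary throughout; the only point I would flag as requiring care is upgrading the obvious size \emph{inequalities} to the \emph{divisibility} statement in the conclusion, which means invoking Lagrange and the invariant factor decomposition at each reduction rather than settling for cardinality bounds alone.
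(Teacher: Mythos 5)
Your proof is correct and follows essentially the same route as the paper: both arguments use the fact that $s$ annihilates $H^1(G,A)$ to reduce to bounding $Z^1(G,A)/sZ^1(G,A)$, together with the embedding $Z^1(G,A)\hookrightarrow A^{s-1}$. The only difference is the final count: the paper simply observes that $Z^1(G,A)$, being a subgroup of an abelian group generated by $r(s-1)$ elements, is itself generated by at most $r(s-1)$ elements, so $Z^1/sZ^1$ is a quotient of $(\mathbb{Z}/s\mathbb{Z})^{r(s-1)}$, whereas you reach the same divisibility bound by splitting $A$ (and hence $Z^1$) into free and torsion parts via the structure theorem -- a slightly longer but equivalent bookkeeping.
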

\begin{proof} 
The finiteness of $H^1(G,A)$ is well-known (cf. \cite[Ch. 10, Theorem 10.29]{R}), so we will just show how to extract an explicit estimate for the order from the standard argument. 
The group $H^1(G , A)$ is a quotient of the group of 1-cocycles $Z^1(G , A)$ which in turn is a subgroup of the group $F^1(G , A)$ of functions satisfying $f(1) = 0$. Clearly, $F^1(G , A) \simeq A^{s-1}$ can be generated by $\leq r(s-1)$ elements as an abelian group, and hence the same is true for $Z^1(G , A)$. On the other hand, $s = \vert G \vert$ annihilates $H^1(G , A)$ (cf. \cite[Ch. 10, Theorem 10.26]{R}), making the latter a quotient of $Z^1(G , A) / s Z^1(G , A)$. The above estimate on the number of generators yields that $Z^1(G , A) / sZ^1(G , A)$ is finite of order dividing $\vert (\mathbb{Z}/s\mathbb{Z})^{r(s-1)} \vert = s^{r(s-1)}$, and the required fact follows. 
\end{proof}

\noindent{\it Proof of Proposition \ref{boundH^1torus}.}
We have $X(T) \simeq \mathbb{Z}^d$ as abelian groups, and hence the action of $\mathcal{G}_T$ on $X(T)$ gives rise to a representation $\mathcal{G}_T \to \mathrm{GL}_d(\mathbb{Z})$. Furthermore, the fact that $K_T$ is the {\it minimal} splitting field of $T$ implies that this representation is {\it faithful}, i.e. $\mathcal{G}_T$ is isomorphic to a subgroup of $\mathrm{GL}_d(\mathbb{Z})$. It follows from the reduction theory for arithmetic groups (cf. \cite[Theorem 4.9]{PlRa2}) that $\mathrm{GL}_d(\mathbb{Z})$ has finitely many conjugacy classes of finite subgroups, so there is an integer $\gamma(d)$ depending only on $d$ such that the order of every finite subgroup of $\mathrm{GL}_d(\ZZ)$ divides $\gamma(d)$. In fact, one can give an explicit (although nonoptimal) bound $\gamma(d)$ by using Minkowski's Lemma (cf. \cite[Ch. 4, \S4.8, Lemma 4.63]{PlRa2}), according to which for any prime $p > 2$, the kernel of the reduction map $\rho_p \colon \mathrm{GL}_d(\mathbb{Z}) \to \mathrm{GL}_d(\mathbb{Z}/p\mathbb{Z})$ (in other words, the congruence subgroup of level $p$) is torsion-free, and consequently, the order of every finite subgroup of $\mathrm{GL}_d(\mathbb{Z})$ divides $\vert \mathrm{GL}_d(\mathbb{Z}/p\mathbb{Z}) \vert$. Using $p = 3$, we see that one can take 
\begin{equation}\label{E:orderGL}
\gamma(d) := \vert \mathrm{GL}_d(\mathbb{Z}/3\mathbb{Z})\vert = \prod_{i = 0}^{d-1} (3^d - 3^i)  
\end{equation}
(obviously, this function is super-increasing). Applying Lemma \ref{estimate}, we obtain that 
\begin{equation}\label{E:Psi-Expl}
\psi(d) := \gamma(d)^{d(\gamma(d) - 1)}
\end{equation}
meets our requirements. 

\hfill\(\Box\)

\begin{cor}\label{C:order}
Let $\psi(d)$ be as in the proposition. Then for every $d$-dimensional $K$-torus $T$ and any finite Galois extension $F/K$ that splits $T$, with Galois group $G = \mathrm{Gal}(F/K)$, the order of $H^1(G , X(T))$ divides $\psi(d)$. 
\end{cor}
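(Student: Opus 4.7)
The plan is to deduce the corollary from Proposition \ref{boundH^1torus} via a single inflation-restriction step. Since $P$ is by definition the \emph{minimal} splitting field of $T$, any finite Galois extension $F/K$ that splits $T$ must contain $P$. I would set $H := \mathrm{Gal}(F/P)$; this is a normal subgroup of $G$, and the short exact sequence $1 \to H \to G \to G/H \to 1$ identifies $G/H$ with $\mathcal{G} = \mathrm{Gal}(P/K)$, the Galois group appearing in Proposition \ref{boundH^1torus}.

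The key observation is that $H$ acts trivially on the character group $X(T)$: indeed, $T$ is already split over $P$, so $X(T)$ is a trivial $\mathrm{Gal}(\bar{K}/P)$-module and hence a trivial $H$-module. In particular $X(T)^H = X(T)$, and the five-term exact sequence of low-degree terms of the Hochschild--Serre spectral sequence reads
$$0 \longrightarrow H^1(\mathcal{G}, X(T)) \xrightarrow{\mathrm{inf}} H^1(G, X(T)) \xrightarrow{\mathrm{res}} H^1(H, X(T))^{\mathcal{G}}.$$

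Next I would compute the rightmost term: with trivial action, $H^1(H, X(T)) = \mathrm{Hom}(H, X(T))$, but $X(T) \simeq \mathbb{Z}^d$ is torsion-free while $H$ is finite, so this group vanishes. Hence inflation is an isomorphism $H^1(\mathcal{G}, X(T)) \xrightarrow{\sim} H^1(G, X(T))$, and the bound $\psi(d)$ supplied by Proposition \ref{boundH^1torus} transfers verbatim to $H^1(G, X(T))$.

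There is no real obstacle here; the only conceptual point is recognizing that minimality of $P$ forces $H$ to act trivially on the character lattice, after which torsion-freeness of $X(T)$ kills the restriction target and the estimate from Proposition \ref{boundH^1torus} propagates without loss.
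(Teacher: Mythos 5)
Your proposal is correct and follows essentially the same route as the paper: both use $P \subset F$, set $H = \mathrm{Gal}(F/P)$ acting trivially on $X(T)$, invoke the inflation-restriction sequence, and kill $H^1(H, X(T)) = \mathrm{Hom}(H, X(T))$ by torsion-freeness of $X(T)$, so that inflation is an isomorphism and the bound $\psi(d)$ from Proposition \ref{boundH^1torus} carries over. No differences worth noting.
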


Indeed, since $K_T$ is the minimal splitting field of $T$, we have the inclusion $K_T \subset F$, and then $\mathcal{G}_T = G/H$ where $H := \mathrm{Gal}(F/K_T)$; in fact, $H$ is the kernel of the natural action of $G$ on $X(T)$. The inflation-restriction sequence (cf. \cite[Ch. 4, \S5]{ANT}) yields the following 
exact sequence
$$0\xrightarrow{} H^1(\mc{G}_T,X(T))\xrightarrow{\mathrm{Inf}}H^1(G,X(T))\xrightarrow{\mathrm{Res}}H^1(H,X(T)).$$
Since $X(T)$ is torsion-free, we see that $H^1(H , X(T)) = \mathrm{Hom}(H , X(T))$ is trivial, so the inflation map yields an isomorphism, and our claim follows from the proposition. 

We will now combine Proposition \ref{boundH^1torus} with the Nakayama-Tate Theorem  (cf. \cite[Theorem 6, \S 11.3]{V}) to derive an arithmetic result that we will need in the next section. So, let $T$ be a torus defined over a {\it global} field $K$, let $F/K$ be a finite Galois extension that splits $T$, and let $G = \mathrm{Gal}(F/K)$ be the corresponding Galois group. We let $K[T]$ (resp., $F[T]$) denote the coordinate ring of $T$ over $K$ (resp., over $F$); recall that $F[T]$ is naturally identified with the group algebra $F[X(T)]$ and that this identification is compatible with the Galois action. For any $F$-algebra $B$ we have 
$$
T(B) = \mathrm{Hom}_{K\text{-alg}}(K[T] , B) = \mathrm{Hom}_{F\text{-alg}}(F[T] , B) = \mathrm{Hom}(X(T) , B^{\times}).
$$
Moreover, if $G$ acts on $B$ by semi-linear transformations, then the identification $T(B) = \mathrm{Hom}(X(T) , B^{\times})$ is compatible with the action of $G$. We recall that the adele ring $\mathbb{A}_F$ is equipped with a semi-linear $G$-action via the identification $\mathbb{A}_F \simeq \mathbb{A}_K \otimes_K F$ (cf. \cite[Ch. 2, \S14]{ANT}). Then the natural map of $G$-modules $T(F) \to T(\mathbb{A}_F)$ gets identified with the map $\mathrm{Hom}(X(T) , F^{\times}) \to \mathrm{Hom}(X(T) , \mathbb{I}_F)$ induced by the diagonal embedding $F^{\times} \to \mathbb{I}_F$. 

Now, we let $C_F = \mathbb{I}_F / F^{\times}$ denote the group of idele classes and set $C_F(T) = \mathrm{Hom}(X(T) , C_F)$ with the standard $G$-action. Since $X(T)$ is a free abelian group, the exact sequence 
$$
1 \to F^{\times} \longrightarrow \mathbb{I}_F \longrightarrow C_F \to 1
$$
induces an exact sequence of $G$-modules 
$$
1 \to T(F) \longrightarrow T(\mathbb{A}_F) \longrightarrow C_F(T) \to 1. 
$$
The corresponding cohomological exact sequence contains the following fragment 
\begin{equation}\label{ES:cohom}
H^1(G , T(F)) \stackrel{\theta}{\longrightarrow} H^1(G , T(\mathbb{A}_F)) \longrightarrow H^1(G , C_F(T)).
\end{equation}

\begin{prop}\label{finitecoker}
In the exact sequence (\ref{ES:cohom}), the cokernel $\coker\theta$ is finite of order dividing $\psi(d)$ (the function from Proposition \ref{boundH^1torus}).   
\end{prop}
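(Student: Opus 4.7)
The plan is to bound $\coker\theta$ by injecting it into $H^1(G, C_F(T))$ via the long exact sequence, and then translating this via the Nakayama–Tate theorem and Tate duality back to $H^1(G, X(T))$, where Corollary \ref{C:order} supplies the desired bound.

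From (\ref{ES:cohom}) we get an injection $\coker\theta \hookrightarrow H^1(G, C_F(T))$, so it suffices to bound the right-hand side. Since $X(T)$ is a finitely generated free $\mathbb{Z}$-module, we have a canonical isomorphism of $G$-modules
$$C_F(T) = \mathrm{Hom}(X(T), C_F) \cong X_*(T) \otimes_{\mathbb{Z}} C_F,$$
where $X_*(T) := \mathrm{Hom}_{\mathbb{Z}}(X(T), \mathbb{Z})$ is the cocharacter lattice. Now, $(G, C_F)$ is a class formation with fundamental class $\xi_{F/K} \in H^2(G, C_F)$, so cup product with $\xi_{F/K}$ provides the Nakayama–Tate isomorphism
$$\hat{H}^r(G, X_*(T)) \xrightarrow{\ \sim\ } \hat{H}^{r+2}(G, C_F(T)) \qquad \text{for all } r \in \mathbb{Z}$$
(see \cite[Theorem 6, \S11.3]{V}). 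Specializing to $r = -1$ yields
$$H^1(G, C_F(T)) \cong \hat{H}^{-1}(G, X_*(T)).$$

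Finally, since $X(T)$ and $X_*(T)$ are mutually $\mathbb{Z}$-dual $\mathbb{Z}$-free $G$-modules of finite rank, Tate duality for the finite group $G$ supplies a perfect cup-product pairing of finite abelian groups
$$\hat{H}^{-1}(G, X_*(T)) \times H^1(G, X(T)) \longrightarrow \hat{H}^0(G, \mathbb{Z}) = \mathbb{Z}/|G|\mathbb{Z},$$
whence $|\hat{H}^{-1}(G, X_*(T))| = |H^1(G, X(T))|$. Stringing the three steps together, $|\coker\theta|$ divides $|H^1(G, X(T))|$, which in turn divides $\psi(d)$ by Corollary \ref{C:order}. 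The main subtlety lies in invoking the correct form of Nakayama–Tate: one needs the class-formation version with coefficient module $X_*(T)$, and the identification $X_*(T) \otimes_{\mathbb{Z}} C_F \cong \mathrm{Hom}(X(T), C_F) = C_F(T)$ crucially uses the $\mathbb{Z}$-freeness of $X(T)$. A possible alternative to the Tate-duality step is to bound $\hat{H}^{-1}(G, X_*(T))$ directly — it is a quotient of $\ker(N_G | X_*(T))$, which is generated by $\leq d$ elements and annihilated by $|G|$ — but this route requires more care because it produces a bound in terms of $|G|$ rather than $|\mathcal{G}|$, and only the duality argument shows that the order depends only on the action of the Galois group of the minimal splitting field.
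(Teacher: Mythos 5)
Your proof is correct and follows essentially the same route as the paper: embed $\coker\theta$ into $H^1(G, C_F(T))$ via the long exact sequence, identify that group with $H^1(G, X(T))$ by Nakayama--Tate, and apply the bound from Corollary \ref{C:order}. The only difference is cosmetic: you unpack the cited Nakayama--Tate isomorphism into the cup-product isomorphism $\hat{H}^{-1}(G, X_*(T)) \simeq H^1(G, C_F(T))$ followed by finite-group Tate duality, whereas the paper simply quotes the resulting isomorphism $H^1(G, C_F(T)) \simeq H^1(G, X(T))$ directly (and your appeal to Corollary \ref{C:order}, rather than to Proposition \ref{boundH^1torus} for the possibly non-minimal splitting field $F$, is if anything slightly more careful).
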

\begin{proof}
Due to the exact sequence (\ref{ES:cohom}), the cokernel $\coker\theta$ embeds into $H^1(G , C_F(T))$. On the other hand, the Nakayama-Tate Theorem \cite[Theorem 6, \S 11.3]{V} furnishes an isomorphism 
$$
H^1(G , C_F(T)) \simeq H^1(G , X(T)). 
$$
Our claim now follows from Proposition \ref{boundH^1torus}. 
\end{proof}

We conclude this section with one consequence of Proposition \ref{finitecoker}. Let $S \subset V^K$ be an arbitrary subset, and let $\bar{S}$ be the set of all extensions of $v \in S$ to $F$. Then $G$ acts on $\mathbb{A}_{F,\bar{S}} \simeq \mathbb{A}_{K,S} \otimes_K F$ through the second factor, making $T(\mathbb{A}_{F,\bar{S}})$ into a $G$-module, with the diagonal embedding $F \hookrightarrow \mathbb{A}_{F,\bar{S}}$ yielding a homomorphism of cohomology groups 
$$
\theta_{\bar{S}} \colon H^1(G , T(F)) \longrightarrow H^1(G,  T(\mathbb{A}_{F,\bar{S}})).
$$
Furthermore, the projection $T(\mathbb{A}_F) \to T(\mathbb{A}_{F,\bar{S}})$ defines the top arrow in the following commutative diagram
$$
\begin{tikzcd}
{H^1(G , T(\AF_F))} & {H^1(G , T(\AF_{F,\bar{S}}))} \\
{H^1(G ,T(F))} & {H^1(G,T(F))}
\arrow["\nu" , from=1-1, to=1-2]
\arrow["\theta", from=2-1, to=1-1]
\arrow["=" , from=2-1, to=2-2]
\arrow["{\theta_{\bar{S}}}"', from=2-2, to=1-2]
\end{tikzcd}
$$
Since $T(\mathbb{A}_{F,\bar{S}})$ is a direct product factor of $T(\mathbb{A}_F)$ as $G$-module, $\nu$ is surjective. So, Proposition \ref{finitecoker} yields the following. 
\begin{cor}\label{C:theta}
For any subset $S \subset V^K$, the cokernel $\coker \theta_{\bar{S}}$ is finite of order dividing $\psi(d)$. 
\end{cor}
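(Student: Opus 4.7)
The plan is to deduce the corollary from Proposition~\ref{finitecoker} by a short diagram chase on the commutative square that immediately precedes the statement. All of the essential work has already been packaged into that proposition, so the remaining task is purely to propagate the bound on $\coker \theta$ across the surjection $\nu$.

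First I would observe that, because the diagram commutes and its bottom horizontal arrow is the identity on $H^1(G, T(F))$, we have $\theta_{\bar{S}} = \nu \circ \theta$; in particular $\mathrm{im}(\theta_{\bar{S}}) = \nu(\mathrm{im}(\theta))$. The surjectivity of $\nu$, recorded just before the corollary on the grounds that $T(\mathbb{A}_F(\bar{S}))$ is a $G$-stable direct-product factor of $T(\mathbb{A}_F)$, then yields, by passage to quotients, a canonical surjection
$$
\bar{\nu} \colon \coker \theta \;=\; H^1(G, T(\mathbb{A}_F))/\mathrm{im}(\theta) \;\twoheadrightarrow\; H^1(G, T(\mathbb{A}_F(\bar{S})))/\mathrm{im}(\theta_{\bar{S}}) \;=\; \coker \theta_{\bar{S}}.
$$

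Second, Proposition~\ref{finitecoker} asserts that $\coker \theta$ is finite of order dividing $\psi(d)$. Combined with the existence of the surjection $\bar{\nu}$, this forces $\coker \theta_{\bar{S}}$ to be finite with order dividing that of $\coker \theta$, and hence dividing $\psi(d)$, which is the desired conclusion.

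There is essentially no obstacle at this point: all of the nontrivial machinery, namely the Nakayama--Tate identification $H^1(G, C_F(T)) \simeq H^1(G, X(T))$ and the explicit bound on $H^1(G, X(T))$ coming from Proposition~\ref{boundH^1torus}, has already been absorbed into the proof of Proposition~\ref{finitecoker}. The corollary is a formal consequence, and the only ingredient contributed at this stage is the surjectivity of $\nu$, which itself is immediate from the $G$-equivariant direct-product decomposition of the adeles.
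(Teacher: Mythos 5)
Your argument is correct and is essentially the paper's own proof: the commutative square gives $\theta_{\bar{S}} = \nu \circ \theta$, the $G$-equivariant direct-product decomposition of $T(\mathbb{A}_F)$ makes $\nu$ surjective, and the induced surjection $\coker\theta \twoheadrightarrow \coker\theta_{\bar{S}}$ transfers the bound from Proposition \ref{finitecoker}. Nothing is missing.
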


\section{Proof of Theorem \ref{T:1}: general case} \label{ASAtori}

To complete the proof of Theorem \ref{T:1}, we will relate arbitrary tori to quasi-split ones using the following fact,  which is well known (cf. \cite[Proposition 2.3]{PlRa2}) except that the dimension estimate was never recorded. 
\begin{prop}\label{exactseqtori}
Let $T$ be a torus of dimension $d$ defined over an arbitrary field $K$, and let $P$ be the minimal splitting field of $T$. Then there is an exact sequence of $K$-tori and $K$-defined morphisms
$$
1 \to T_1 \longrightarrow T_0 \stackrel{\pi}{\longrightarrow} T \to 1, 
$$
where $T_0$ is a product of $d$ copies of $\mathrm{R}_{P/K}(\mathbb{G}_m)$ (hence quasi-split), and $\dim T_1 \leq \lambda(d)$, with $\lambda$ being an explicit increasing function on integers $d \geq 1$. 
\end{prop}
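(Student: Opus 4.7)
The plan is to exploit the anti-equivalence between the category of $K$-tori split by $P$ and that of finitely generated $\mathbb{Z}$-free modules for the group $\mathcal{G} = \mathrm{Gal}(P/K)$, given by $T \mapsto X(T)$. Under this anti-equivalence, surjective homomorphisms of $K$-tori correspond to injective $\mathcal{G}$-equivariant homomorphisms of character lattices, and the quasi-split torus $\mathrm{R}_{P/K}(\mathbb{G}_m)$ corresponds to the regular representation $\mathbb{Z}[\mathcal{G}]$. Constructing the required surjection $\pi \colon T_0 \twoheadrightarrow T$ (with $T_0$ a product of $d$ copies of $\mathrm{R}_{P/K}(\mathbb{G}_m)$) therefore reduces to exhibiting a $\mathcal{G}$-equivariant injection $\iota \colon X(T) \hookrightarrow \mathbb{Z}[\mathcal{G}]^d$, where $d = \dim T = \mathrm{rk}\, X(T)$. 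Then $T_1 := \ker \pi$ has character module $X(T_1) = \mathrm{coker}\,\iota$.

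To build $\iota$, I would use the standard isomorphism of $\mathcal{G}$-modules
\begin{equation*}
\mathbb{Z}[\mathcal{G}] \otimes_{\mathbb{Z}} X(T) \ \xrightarrow{\ \sim\ }\ \mathbb{Z}[\mathcal{G}]^d, \qquad g \otimes m \ \longmapsto\ g \otimes g^{-1}m,
\end{equation*}
where the left-hand side carries the diagonal $\mathcal{G}$-action and the right-hand side carries the action on the group-algebra factor only (trivializing the coefficients). Composing this with the $\mathcal{G}$-equivariant map
\begin{equation*}
X(T)\ \longrightarrow\ \mathbb{Z}[\mathcal{G}] \otimes_{\mathbb{Z}} X(T), \qquad \chi\ \longmapsto\ \sum_{g \in \mathcal{G}} g \otimes g^{-1}\chi,
\end{equation*}
yields the desired injection $\iota$ (equivariance and injectivity are both immediate: injectivity follows from the fact that distinct group elements give linearly independent pure tensors).

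From the short exact sequence of character modules $0 \to X(T) \to X(T_0) \to X(T_1) \to 0$ we then read off
\begin{equation*}
\dim T_1 \ =\ \mathrm{rk}\, X(T_0) - \mathrm{rk}\, X(T) \ =\ d \cdot |\mathcal{G}| - d \ =\ d(|\mathcal{G}| - 1).
\end{equation*}
The only point that still needs attention is that $|\mathcal{G}|$ must be bounded purely in terms of $d$, and this is exactly where the minimality of $P$ intervenes: just as in the proof of Proposition \ref{boundH^1torus}, the representation $\mathcal{G} \to \mathrm{GL}_d(\mathbb{Z})$ on $X(T)$ is then faithful, so by Minkowski's lemma $|\mathcal{G}|$ divides $\gamma(d) = |\mathrm{GL}_d(\mathbb{Z}/3\mathbb{Z})| = \prod_{i=0}^{d-1}(3^d - 3^i)$ from (\ref{E:orderGL}). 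Consequently one may take
\begin{equation*}
\lambda(d) \ := \ d\bigl(\gamma(d) - 1\bigr),
\end{equation*}
which is visibly increasing in $d$ since $\gamma$ is.

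There is no genuine obstacle here: the construction of the resolution by permutation modules is classical (and is the content of \cite[Proposition 2.3]{PlRa2}), so the only new ingredient is the explicit dimension estimate. That estimate, in turn, is essentially bookkeeping, reusing the bound on orders of finite subgroups of $\mathrm{GL}_d(\mathbb{Z})$ already recorded in Section \ref{BoundH1}.
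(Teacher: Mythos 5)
Your route is the character-lattice dual of the paper's argument, and it can be made to work with the same constant, but it is not quite complete as written. The paper argues covariantly with cocharacters: since $X_*(T)$ is generated by $d$ elements as a $\mathbb{Z}[\mathcal{G}]$-module, there is a surjection $\mathbb{Z}[\mathcal{G}]^d \to X_*(T)$ whose kernel is automatically $\mathbb{Z}$-torsion-free (a submodule of a free module), so the equivalence of categories immediately yields an exact sequence of \emph{tori} $1 \to T_1 \to T_0 \to T \to 1$ with $T_0 \simeq \mathrm{R}_{P/K}(\mathbb{G}_m)^d$; the bound $\dim T_1 = d(|\mathcal{G}|-1) \le d(\gamma(d)-1)$ is then read off exactly as you do, using the faithfulness of $\mathcal{G} \to \mathrm{GL}_d(\mathbb{Z})$ coming from minimality of $P$. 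Your contravariant version (embedding $X(T)$ into an induced module) gives the same numerology, but it is more delicate at precisely the point you pass over.

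Two issues. First, a bookkeeping slip: the map $\phi(\chi)=\sum_{g}g\otimes g^{-1}\chi$ is \emph{not} $\mathcal{G}$-equivariant into $\mathbb{Z}[\mathcal{G}]\otimes_{\mathbb{Z}}X(T)$ with the diagonal action (compare $h\cdot\phi(\chi)=\sum_{u}u\otimes (hu^{-1}h)\chi$ with $\phi(h\chi)=\sum_{u}u\otimes(u^{-1}h)\chi$); the equivariant map into the diagonal module is $\chi\mapsto\bigl(\sum_{g}g\bigr)\otimes\chi$, and the formula $\sum_{g}g\otimes g^{-1}\chi$ only becomes equivariant after your untwisting isomorphism, i.e.\ viewed as a map into $\mathbb{Z}[\mathcal{G}]^d$ with the action on the group-algebra factor alone -- there it is indeed equivariant and injective, so this is a presentational error, not a fatal one. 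The genuine gap is the second point: for $T_1:=\ker\pi$ to be a \emph{torus} (as the proposition asserts, and as the later applications require -- connectedness of $T_1$ is used for Lang's theorem in the footnote to the proof of Theorem \ref{T:1}, and Corollary \ref{C:theta} is applied to $T_1$), you must check that $\mathrm{coker}\,\iota = X(T_0)/\iota(X(T))$ is $\mathbb{Z}$-torsion-free; an injection of character lattices with torsion cokernel only produces a disconnected kernel of multiplicative type. For your particular $\iota$ this is true and easy: if $nx=\iota(\chi)$ with $x=\sum_{g}g\otimes x_g$, comparing the $e$-components gives $\chi=nx_e$, so $\chi=n\chi'$ and then $x=\iota(\chi')$ by torsion-freeness of the ambient lattice, i.e.\ the image is saturated. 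Add this verification (or simply work with cocharacters as the paper does, where the issue evaporates), and your argument is complete with $\lambda(d)=d(\gamma(d)-1)$, matching the paper.
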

\begin{proof}
Let $\mathcal{G} = \mathrm{Gal}(P/K)$. Being a free abelian group of rank $d$, the group of co-characters $X_*(T) = \mathrm{Hom}(X(T) , \mathbb{Z})$ can be generated by $d$ elements as $\mathbb{Z}[\mathcal{G}]$-module, and therefore there exists an exact sequence of $\mathbb{Z}[\mathcal{G}]$-modules of the form 
$$
0 \to Y \longrightarrow \mathbb{Z}[\mathcal{G}]^d \longrightarrow X_*(T) \to 0.  
$$
Since sending a $K$-torus to its group of co-characters defines an equivalence between the category of $K$-tori that split over $P$ and the category of finitely generated $\mathbb{Z}[\mathcal{G}]$-modules  without $\mathbb{Z}$-torsion (cf. \cite[2.1.7]{PlRa2}), corresponding to this sequence we have an exact sequence of $K$-tori and $K$-morphisms 
$$
1 \to T_1 \longrightarrow T_0 \longrightarrow T \to 1,
$$
with $X_*(T_1) = Y$ and $X_*(T_0) = \mathbb{Z}[\mathcal{G}]^d$. Then $T_0 \simeq \mathrm{R}_{P/K}(\mathbb{G}_m)^d$, and it remains to estimate $\dim T_1$ in terms of $d$. As we have seen in the proof of Proposition  \ref{boundH^1torus}, the order of $\mathcal{G}$ divides $\gamma(d)$ given by (\ref{E:orderGL}). Then 
$$
\dim T_1 = \dim T_0 - \dim T = (\vert \mathcal{G} \vert - 1)d, 
$$
so one can take $\lambda(d) := d(\gamma(d) - 1)$. 
\end{proof}

\noindent {\it Proof of Theorem \ref{T:1}.} Let $T$ be a $d$-dimensional torus defined over a global field $K$ with the minimal splitting field $P$, let $\mathcal{G} = \mathrm{Gal}(P/K)$, and let $S \subset V^K$ be a subset containing $V^K_{\infty}$ that we will specialize later. We then let $\bar{S}$ denote the set of all extensions of valuations $v \in S$ to $P$. 

We now consider the exact sequence of $K$-tori 
$$
1 \to T_1 \longrightarrow T_0 \stackrel{\pi}{\longrightarrow} T \to 1,
$$
constructed in Proposition \ref{exactseqtori} (so, in particular, $\dim T_1 \leq \lambda(d)$). The image of the embedding of abelian groups $\pi^* \colon X(T) \to X(T_0)$ has a complement (since $\coker \pi^* \simeq X(T_1)$ is torsion-free), which gives rise to a $P$-defined section $T \to T_0$ for $\pi$. It follows that for any $P$-algebra $B$, the group homomorphism $\pi_B \colon T_0(B) \to T(B)$ is surjective. Thus, we obtain the following commutative diagram of $\mathcal{G}$-modules with exact rows 
\begin{equation}\label{exactdiagoftori}
\begin{tikzcd}
	1 & {T_1(\AF_{P,\bar{S}})} & {T_0(\AF_{P,\bar{S}})} & {T(\AF_{P,\bar{S}})} & 1 \\
	1 & {T_1(P)} & {T_0(P)} & {T(P)} & 1
	\arrow[from=2-1, to=2-2]
	\arrow[from=1-1, to=1-2]
	\arrow[hook, from=2-2, to=1-2]
	\arrow[hook, from=2-3, to=1-3]
	\arrow[hook, from=2-4, to=1-4]
	\arrow[from=2-2, to=2-3]
	\arrow[from=2-3, to=2-4]
	\arrow[from=2-4, to=2-5]
	\arrow[from=1-4, to=1-5]
	\arrow[from=1-2, to=1-3]
	\arrow[from=1-3, to=1-4]
\end{tikzcd}
\end{equation}
where the vertical maps are the natural diagonal embeddings. Since $\mc{G}$ acts on $\AF_{P,\bar{S}}\simeq \AF_{K,S}\otimes_KP$ through the second factor, and hence $(\AF_{P,\bar{S}})^{\mathcal{G}} = \AF_{K,S}$, by passing to cohomology we obtain the following commutative diagram with exact rows  
\begin{equation}\label{cohdiag}
\begin{tikzcd}
	{T_0(\AF_{K,S})} & {T(\AF_{K,S})} & {H^1(\mc{G},T_1(\AF_{P,\bar{S}}))} \\
	{T_0(K)} & {T(K)} & {H^1(\mc{G},T_1(P))}
	\arrow[from=2-1, to=2-2]
	\arrow["\pi", from=1-1, to=1-2]
	\arrow["\beta", from=1-2, to=1-3]
	\arrow["\alpha", from=2-2, to=2-3]
	\arrow["{\theta_{\bar{S}}}", from=2-3, to=1-3]
	\arrow[hook, from=2-2, to=1-2]
	\arrow[hook, from=2-1, to=1-1]
\end{tikzcd}
\end{equation} 

We now assume that the index $[T_0(\mathbb{A}_{K,S}) : \overline{T_0(K)}^{(S)}]$ is finite and show that 
\begin{equation}\label{E:EEEE1}
[T(\mathbb{A}_{K,S}) : \overline{T(K)}^{(S)}] \ \ \text{divides} \ \ [T_0(\mathbb{A}_{K,S}) : \overline{T_0(K)}^{(S)}] \cdot \psi(\lambda(d)).
\end{equation}
Since $\overline{T_0(K)}^{(S)}$ is an open subgroup of $T_0(\mathbb{A}_{K,S})$ and, by \cite[Proposition 6.5]{PlRa}, the group homomorphism $\pi \colon T_0(\mathbb{A}_{K,S}) \to T(\mathbb{A}_{K,S})$ is open\footnote{In {\it loc. cit.} this fact is proved for number fields but the argument remains valid also for global function fields. For the reader's convenience, we will sketch a proof in the case at hand. First, since $\pi$ admits a $P$-defined section, for almost all $v \in V^K_f$ and $w \vert v$, the sequence $1 \to T_1(\mathcal{O}_{P_w}) \longrightarrow T_0(\mathcal{O}_{P_w}) \stackrel{\pi}{\longrightarrow} T(\mathcal{O}_{P_w}) \to 1$ is exact.  
Second, since $T_1$ is connected, using Lang's theorem and Hensel's lemma, one concludes that for almost all $v$ we have $H^1(\mathcal{G}_w , T_1(\mathcal{O}_{P_w})) = 1$ (cf. \cite[Corollary of Theorem 6.8]{PlRa}). It follows that for almost all $v \in V^K_f$ we have $\pi(T_0(\mathcal{O}_v)) = T(\mathcal{O}_v)$. Finally, since $\pi$ is a quotient map, its differential is surjective, and therefore $\pi \colon T_0(K_v) \to T(K_v)$ is open for all $v$ by the Implicit Function Theorem (cf. \cite[Proposition 3.4]{PlRa2}). Combining these facts, we obtain the openness of $\pi \colon T_0(\mathbb{A}_{K,S}) \to T(\mathbb{A}_{K,S})$.}, the image $\pi(\overline{T_0(K)}^{(S)})$ is an open subgroup of $T(\mathbb{A}_{K,S})$ contained in $\overline{T(K)}^{(S)}$, and therefore 
$$
\overline{T(K)}^{(S)} = T(K) \cdot \pi(\overline{T_0(K)}^{(S)}).
$$
Set $\Omega := T(K) \cdot \pi(T_0(\mathbb{A}_{K,S}))$. Then the index $[\Omega : \overline{T(K)}^{(S)}]$ divides $[T_0(\mathbb{A}_{K,S}) : \overline{T_0(K)}^{(S)}]$, and in order to complete the proof of (\ref{E:EEEE1}) it remains to show that $[T(\mathbb{A}_{K,S}) : \Omega]$ divides $\psi(\lambda(d))$.  Using the exactness of the top row in (\ref{cohdiag}), we see that 
$$
[T(\mathbb{A}_{K,S}) : \Omega] = [\beta(T(\mathbb{A}_{K,S})) : \beta(\Omega)], 
$$
hence divides $[H^1(\mathcal{G} , T_1(\mathbb{A}_{P,\bar{S}})) : \beta(T(K))]$. 
But since $T_0$ is quasi-split, we have $H^1(\mathcal{G},T_0(P)) = 1$ by Hilbert's 90  for quasi-split tori (cf. \cite[Lemma 2.21]{PlRa2}), and consequently $\alpha$ is surjective. Thus, the latter index equals $\vert \coker \theta_{\bar{S}} \vert$, which divides $\psi(\lambda(d))$ according to Corollary \ref{C:theta} applied to $T_1$ (recall that $\dim T_1 \leq \lambda(d)$ and $\psi$ is super-increasing). 

To complete the argument, we will now combine (\ref{E:EEEE1}) with Theorem \ref{T:2}. If $\mathfrak{d}_K(S \cap \mathrm{Spl}(P/K)) > 0$ then by Theorem \ref{T:2} the index $[T_0(\mathbb{A}_{K,S}) : \overline{T_0(K)}^{(S)}]$ is finite, and then by (\ref{E:EEEE1}) the index $[T(\mathbb{A}_{K,S}) : \overline{T(K)}^{(S)}]$ is also finite. In fact, the proof of Theorem \ref{T:2} yields the following explicit estimate: 
$$
[T(\mathbb{A}_{K,S}) : \overline{T(K)}^{(S)}] \leq \mathfrak{d}_K(S \cap \mathrm{Spl}(P/K))^{-d} \cdot \psi(\lambda(d)). 
$$

Now, if $S$ is a tractable set containing a set of the form $V^K_{\infty} \cup (\mathcal{P}(L/K , \mathcal{C}) \setminus \mathcal{P}_0)$ and condition (\ref{technicalconditionarbtori}) holds, then by Theorem \ref{T:2} the index $[T_0(\mathbb{A}_{K,S}) : \overline{T_0(K)}^{(S)}]$ divides $n^d$, so by (\ref{E:EEEE1}) the index $[T(\mathbb{A}_{K,S}) : \overline{T(K)}^{(S)}]$ divides 
\begin{equation}\label{C(d,n)}
\tilde{C}(d , n) := n^d \cdot \psi(\lambda(d)). 
\end{equation}
\hfill\(\Box\)

\begin{remark}
Using the same ingredients but more precise bookkeeping, one can improve the constant $\tilde{C}(d , n)$ to $n^d \cdot \gamma(d)^{\lambda(d)(\gamma(d) - 1)}$. It would be interesting to find the optimal (or at least close to optimal) function $\tilde{C}(d , n)$. 
\end{remark}

\section{The proof of Theorem \ref{T:A}}\label{ASAreductive}

\begin{samepage}
Let $G$ be a reductive algebraic group over a number field $K$ which is an almost direct product of a $K$-torus $T$ and a semi-simple $K$-group $H$. We first prove Theorem \ref{T:A} in the special case where $H$ is assumed to be simply connected. The argument in this case has two major ingredients: the classical criterion for strong approximation (cf. \cite[Theorem 7.12]{PlRa}) and our Theorem \ref{T:1}. The general case is then reduced to the special case by means of a result stating that any reductive $K$-group is a quotient of a reductive $K$-group as in the special case by a quasi-split torus; see Lemma \ref{L:spec_cover}.

\subsection{Special case: $H$ is simply connected.} 
In this case, we have the following more precise statement that does not depend  on the rank of $H$ and the minimal Galois extension $M$ of $K$ over which $H$ becomes an inner form of a split group.

\end{samepage}

\begin{prop}\label{P:simp-con}
Let $G$ be a reductive group over a number field $K$ and suppose that $G = TH$, an almost direct product of a $K$-torus $T$ with the minimal splitting field $P$ and a semi-simple \emph{simply connected} $K$-group $H$.  Then for any infinite subset $S \subset V^K$ the closure $\overline{G(K)}^{(S)}$ is a normal subgroup of $G(\mathbb{A}_{K,S})$ with abelian quotient  $G(\mathbb{A}_{K,S}) / \overline{G(K)}^{(S)}$. 

Furthermore, if $S$ contains $V^K_{\infty}$ and 
$\mathfrak{d}_K(S \cap \mathrm{Spl}(P/K)) > 0$ then $G$ has almost strong approximation with respect to $S$, and if $S$ is  a tractable set of valuations containing a set of the form $V^K_{\infty} \cup (\mathcal{P}(L/K , \mathcal{C}) \setminus \mathcal{P}_0)$, where $\mathcal{P}_0$ is a set of valuations having Dirichlet density zero and there exists $\sigma \in \mathcal{C}$ such that 
$$
\sigma \vert (P \cap L) = \mathrm{id}_{P \cap L},
$$
then the order of  $G(\mathbb{A}_{K,S}) / \overline{G(K)}^{(S)}$ divides $2^{dr} \cdot \tilde{C}(d , n)$ where $d = \dim T$, $r$ is the number of real valuations of $K$,  $n = [L:K]$  and $\tilde{C}(d , n)$ is the function from Theorem \ref{T:1}. 
\end{prop}


For the proof, we consider the exact sequence of $K$-groups
$$
1 \to H \longrightarrow G \stackrel{\pi}{\longrightarrow} T' \to 1,
$$
where $T' = G/H$ and $\pi$ is the quotient map. We have  $H = H_1 \times \cdots \times H_{\ell}$, the direct product of $K$-simple groups $H_i$. Since $S$ is infinite, for each $i = 1, \ldots , \ell$ there exists $v_i \in S$ such that $H_i$ is $K_{v_i}$-isotropic (cf. \cite[Theorem 6.7]{PlRa}). Using the criterion for strong approximation (cf. \cite[Theorem 7.12] {PlRa}), we conclude that $H$ has strong approximation with respect to $S$, i.e. $\overline{H(K)}^{(S)} = H(\mathbb{A}_{K,S})$. Then 
$$
[G(\mathbb{A}_{K,S}) , G(\mathbb{A}_{K,S})] \subset H(\mathbb{A}_{K,S}) \subset \overline{G(K)}^{(S)}, 
$$
implying that $\overline{G(K)}^{(S)}$ is a normal subgroup of $G(\mathbb{A}_{K,S})$, and the quotient  $G(\mathbb{A}_{K,S}) / \overline{G(K)}^{(S)}$ is abelian. Furthermore, there is an injective group homomorphism  
$$
G(\mathbb{A}_{K,S}) / \overline{G(K)}^{(S)} \, \simeq \, \pi(G(\mathbb{A}_{K,S})) / \pi(\overline{G(K)}^{(S)}) \, \hookrightarrow \, T'(\mathbb{A}_{K,S}) / \pi(\overline{G(K)}^{(S)}).  
$$
Now, by Theorem \ref{T:1} the index $[T'(\mathbb{A}_{K,S}) : \overline{T'(K)}^{(S)}]$ is finite if $S$ contains $V^K_{\infty}$ and satisfies $\mathfrak{d}_K(S \cap \mathrm{Spl}(P/K)) > 0$ and divides $\tilde{C}(d , n)$ if $S$ is a tractable set as in the statement. So, to complete the proof it is enough to show that the index $[\overline{T'(K)}^{(S)} : \pi(\overline{G(K)}^{(S)})]$ divides $2^{dr}$ as 
$$
[T'(\mathbb{A}_{K,S}) : \pi(\overline{G(K)}^{(S)})] = [T'(\mathbb{A}_{K,S}) : \overline{T'(K)}^{(S)}] \cdot     
[\overline{T'(K)}^{(S)} : \pi(\overline{G(K)}^{(S)})]. 
$$

\begin{lemma}\label{L:index2} 
The index $[T'(K) : \pi(G(K))]$ is finite and divides $2^{dr}$. 
\end{lemma}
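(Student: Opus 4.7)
The plan is to reformulate the index via Galois cohomology, reduce to real places by the Hasse principle, and conclude by a topological computation at each real place. Concretely, I would start from the exact sequence of $K$-groups
\begin{equation*}
1 \to H \to G \stackrel{\pi}{\longrightarrow} T' \to 1,
\end{equation*}
observe that $T' = G/H \cong T/(T\cap H)$ is a $K$-torus of dimension $d$ (same dimension as $T$ since $T \cap H$ is finite), and extract from the long exact sequence in cohomology the identification $T'(K)/\pi(G(K)) \cong \mathrm{Im}(\delta_H)$, where $\delta_H \colon T'(K) \to H^1(K, H)$ is the connecting map. The problem thus reduces to bounding the order of $\mathrm{Im}(\delta_H)$.

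The second step is to apply the Hasse principle for $H^1$ of semisimple simply connected groups over a number field (Kneser for classical types, Harder--Chernousov for exceptional types): $H^1(K, H) \cong \prod_{v \in V^K_\infty} H^1(K_v, H)$, and only real $v$ contribute since $H^1(\CC, H) = 1$. Consequently $\mathrm{Im}(\delta_H)$ injects into $\prod_{v \text{ real}} \mathrm{Im}(\delta_{H,v})$, where the local connecting map yields $\mathrm{Im}(\delta_{H,v}) \cong T'(K_v)/\pi(G(K_v))$.

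The final step is a local computation at each real $v$. Since $\pi \colon G \to T'$ is smooth and surjective, $\pi \colon G(K_v) \to T'(K_v)$ is open, and its image therefore contains the identity component $T'(K_v)^\circ$. Hence $T'(K_v)/\pi(G(K_v))$ is a quotient of $\pi_0(T'(K_v))$, which for a $d$-dimensional $\RR$-torus is an elementary abelian 2-group of order at most $2^d$: writing $T'_\RR$ as a product of $\mathrm{R}_{\CC/\RR}(\mathbb{G}_m)$, $\mathbb{G}_m$, and $\mathrm{R}^{(1)}_{\CC/\RR}(\mathbb{G}_m)$ factors, only the split $\mathbb{G}_m$ factors contribute to $\pi_0$, each with a factor of $2$, so the total order is at most $2^{\dim T'} = 2^d$. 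Combining the global injection with the local bounds, $T'(K)/\pi(G(K))$ is a 2-torsion abelian group whose order divides $\prod_{v \text{ real}} 2^d = 2^{dr}$.

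I do not anticipate a serious obstacle. The only step that requires some care is ensuring the local quotient is simultaneously \emph{bounded by} $2^d$ \emph{and} 2-torsion: both features are needed to conclude divisibility by $2^{dr}$ rather than a mere numerical inequality, and they follow from the structure theorem for $\RR$-tori used above.
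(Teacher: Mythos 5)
Your proof is correct and follows essentially the same route as the paper: both rest on the connecting map for $1\to H\to G\xrightarrow{\pi} T'\to 1$, the injectivity of $H^1(K,H)\to\prod_{v\ \mathrm{real}}H^1(K_v,H)$ for the simply connected group $H$, and the openness of $\pi$ over $\mathbb{R}$ together with the structure of real tori, which gives $[T'(K_v):T'(K_v)^{\circ}]\mid 2^d$. The only difference is bookkeeping: the paper bounds the index via the subgroup $N=\bigcap_{v\ \mathrm{real}}\bigl(T'(K)\cap T'(K_v)^{\circ}\bigr)$ and then shows $N\subset\pi(G(K))$, whereas you package the same facts through $\mathrm{Im}(\delta_H)$ -- note that $H^1(K,H)$ is only a pointed set, so the divisibility statement is cleanest when read off from the group homomorphism $T'(K)\to\prod_{v\ \mathrm{real}}T'(K_v)/\pi(G(K_v))$, whose kernel is exactly $\pi(G(K))$ by the Hasse principle.
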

\begin{proof}
Let $V^K_r$ be the set of all real valuations of $K$, and let $v \in V^K_r$. It follows from the Implicit Function Theorem that the subgroup $\pi(G(K_v)) \subset T'(K_v)$ is open (cf. \cite[Corollary 3.7]{PlRa2}), hence contains the {\it topological} connected component $T'(K_v)^{\circ}$. On the other hand, it is well-known that $T'$ is isomorphic over $K_v = \mathbb{R}$ to a torus of the form 
$$
(\mathbb{G}_m)^a \times (\mathrm{R}_{\mathbb{C}/\mathbb{R}}(\mathbb{G}_m))^b \times (\mathrm{R}^{(1)}_{\mathbb{C}/\mathbb{R}}(\mathbb{G}_m))^c
$$
for some nonnegative integers $a , b$ and $c$ (cf. \cite[2.2.4]{PlRa2}), and then $[T'(K_v) : T'(K_v)^{\circ}] = 2^a$ divides $2^d$. It follows that for the subgroup 
$$
N := \bigcap_{v \in V^K_r} (T'(K) \cap T'(K_v)^{\circ}), 
$$
the index $[T'(K) : N]$ divides $2^{dr}$, and it remains to show that $N \subset \pi(G(K))$. 

For any field extension $E/K$, we have an exact sequence 
$$
G(E) \stackrel{\pi}{\longrightarrow} T'(E) \stackrel{\delta_E}{\longrightarrow} H^1(E , H).
$$
Now, let $x \in N$. Due to the exactness of the above sequence for $E = K$, we need to show that the cohomology class $\xi := \delta_K(x)$ is trivial. However, for each $v \in V^K_r$, due to the inclusion $T'(K_v)^{\circ} \subset \pi(G(K_v))$ and the definition of $N$, we have that the class $\xi_v := \delta_{K_v}(x)$ is trivial. Thus, $\xi$ lies in the kernel of the map 
$$
H^1(K , H) \stackrel{\gamma_H}{\longrightarrow} \prod_{v \in V^K_r} H^1(K_v , H). 
$$
But according to the Hasse principle for simply connected groups (cf. \cite[Theorem 6.6]{PlRa}), $\gamma_H$ is injective, and hence $\xi$ is trivial, as required. 
\end{proof} 

Since the map $\pi \colon G(\mathbb{A}_{K,S}) \to T'(\mathbb{A}_{K,S})$ is open (cf. \cite[Proposition 6.5]{PlRa}) and $\overline{G(K)}^{(S)}$ contains its kernel $H(\mathbb{A}_{K,S})$, the image $\pi(\overline{G(K)}^{(S)})$ is closed, hence coincides with $\overline{\pi(G(K))}^{(S)}$. 
So,  Lemma~\ref{L:index2} immediately implies that the index $[\overline{T'(K)}^{(S)} : \pi(\overline{G(K)}^{(S)})]$ divides $2^{dr}$ completing the argument.  \hfill $\Box$


\subsection{Existence of special covers.} To consider the general case in Theorem \ref{T:A}, we will first establish the existence of special covers of arbitrary reductive groups that enable one to realize every reductive group as a quotient of a reductive group with {\it simply connected} semi-simple part (= commutator subgroup) by a quasi-split torus.   In order to avoid dealing with quotients by non-reduced group schemes, we limit ourselves to characteristic zero. 
\begin{lemma}\label{L:spec_cover}
Let $G$ be a reductive algebraic group over a field $K$ of characteristic zero, and suppose that $G = TH$, an almost direct product of a $K$-torus $T$ and a semi-simple $K$-group $H$. Let $\ell$ be the rank of $G$, and let $M$ be the minimal Galois extension of $K$ over which $H$ becomes an inner form of a $K$-split group. Then there exists an exact sequence of $K$-groups 
\begin{equation}\label{EQ9}
1 \to T_0 \longrightarrow \widetilde{G} \stackrel{\nu}{\longrightarrow} G \to 1
\end{equation}
such that
\begin{enumerate}[label = \rm{(\arabic*)}]
    \item $T_0$ is a quasi-split $K$-torus that becomes split over $M$; 
    \item $\widetilde{G} =\widetilde{T}\widetilde{H}$ is an almost direct product of a $K$-torus $\widetilde{T}$ which is isogenous to $T_0 \times T,$ with $\dim \widetilde{T} = \ell$, and a semi-simple simply connected $K$-group $\widetilde{H}$.  
\end{enumerate}
\end{lemma}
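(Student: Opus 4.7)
The strategy is to realize $\widetilde{G}$ as a $z$-extension of $G$ tailored to the dimensional constraints in the statement. Let $\pi\colon \widetilde{H}\to H$ be the simply connected cover of $H=[G,G]$, and set $F:=\ker\pi \subset Z(\widetilde{H})$. Because inner automorphisms of a semi-simple group act trivially on its center and $H$ becomes an inner form of a $K$-split group over $M$, the Galois action on $F$ factors through $\mathrm{Gal}(M/K)$; in particular, $F$ may be regarded as a finite $\mathrm{Gal}(M/K)$-module.

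The crucial step is to construct a quasi-split $K$-torus $T_0$, splitting over $M$, of dimension \emph{exactly} $\mathrm{rank}(H)=\ell-\dim T$, admitting a $K$-embedding $\iota\colon F\hookrightarrow T_0$. Dually this amounts to producing a $\mathrm{Gal}(M/K)$-permutation module of rank $\mathrm{rank}(H)$ that surjects onto $X(F)$. Existence is obtained by starting from a maximal torus $T_H$ of $\widetilde{H}$ splitting over $M$ (which exists for inner forms of split groups), so that $F\hookrightarrow T_H$ and $X(T_H)\twoheadrightarrow X(F)$; one then chooses a permutation resolution of $X(T_H)$ and adjusts by copies of $\mathbb{G}_m$ or factors of the form $\mathrm{R}_{E/K}(\mathbb{G}_m)$ with $K\subset E \subset M$ to pin down the rank. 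With some additional bookkeeping, $\iota$ can be arranged to extend to a $K$-homomorphism $\iota'\colon \widetilde{Z}_0^H \to T_0$, where $\widetilde{Z}_0^H:=\pi^{-1}(Z_0)\subset \widetilde{H}$ is the preimage of the finite central subgroup $Z_0:=T\cap H$.

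Given such $T_0$ and $\iota'$, form first the pushout $\widetilde{H}':=(\widetilde{H}\times T_0)/\iota(F)$ with $F$ embedded anti-diagonally as $f\mapsto (f,\iota(f)^{-1})$: this is a connected reductive $K$-group with simply connected derived subgroup $\widetilde{H}$, central torus $T_0$, and an exact sequence $1\to T_0\to \widetilde{H}'\to H \to 1$. Then set
\[
\widetilde{G}:=(T\times T_0\times \widetilde{H})/\widetilde{Z}, \qquad \widetilde{Z}:=\{(\pi(\tilde{z})^{-1},\,\iota'(\tilde{z})^{-1},\,\tilde{z}):\tilde{z}\in \widetilde{Z}_0^H\}.
\]
A direct verification shows that $T$, $T_0$, and $\widetilde{H}$ each inject into $\widetilde{G}$ (the injectivity of $\widetilde{H}\hookrightarrow \widetilde{G}$ uses $\iota|_F$ being injective); that the natural projection $\nu\colon \widetilde{G}\to G$ defined by $[(t,t_0,\tilde{h})]\mapsto t\cdot\pi(\tilde{h})$ is surjective with kernel exactly $T_0$ (the anti-diagonal quotient simultaneously kills the $\widetilde{Z}_0^H$-ambiguity in the $\widetilde{H}$-component and the $\iota(F)$-shift in the $T_0$-component); and that $\widetilde{T}:=T\cdot T_0\subset \widetilde{G}$ is a central $K$-torus isogenous to $T_0\times T$ of dimension $\dim T+\mathrm{rank}(H)=\ell$. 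The almost direct product decomposition $\widetilde{G}=\widetilde{T}\widetilde{H}$ follows because $\widetilde{T}\cap\widetilde{H}\subset Z(\widetilde{H})$ is finite.

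The main obstacle is the second step: attaining the precise dimension $\mathrm{rank}(H)$ for $T_0$ while simultaneously retaining the embedding of $F$ and its extension to $\widetilde{Z}_0^H$. Both requirements are statements about the $\mathrm{Gal}(M/K)$-module structure of $F\subset \widetilde{Z}_0^H$, and their joint realization demands a careful permutation-resolution argument rather than a generic cohomological pushout; once this is in place, the remaining verifications are routine diagram chases.
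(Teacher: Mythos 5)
Your overall architecture is the same as the paper's: pass to $T\times\widetilde{H}$ with $\widetilde{H}\to H$ the universal cover, identify the relevant finite central subgroup (your $\widetilde{Z}_0^H=\pi^{-1}(T\cap H)$, the paper's $\Phi=\ker(T\times\widetilde{H}\to G)$, which are the same group), embed it into a quasi-split $K$-torus $T_0$ that splits over $M$ and has dimension $\mathrm{rank}\,H$, and form the anti-diagonal quotient $(T_0\times T\times\widetilde{H})/\widetilde{Z}$; your verification that $\ker\nu\simeq T_0$ and that $\widetilde{G}=\widetilde{T}\widetilde{H}$ with $\dim\widetilde{T}=\ell$ is the same routine check the paper performs. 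However, the one step that carries the real content — producing $T_0$ with \emph{all} of the required properties (quasi-split over $K$, split over $M$, dimension exactly $\mathrm{rank}\,H$, and a $K$-embedding of the whole group $\widetilde{Z}_0^H$, not just of $F$) — is exactly the step you leave open, and your sketch of it does not work as written. First, your starting point, ``a maximal torus $T_H$ of $\widetilde{H}$ splitting over $M$ (which exists for inner forms of split groups),'' is false in general: for $\widetilde{H}=\mathrm{SL}_{1,D}$ with $D$ a quaternion division algebra over $\QQ$ one has $M=K=\QQ$, yet every maximal torus is nonsplit since the group is anisotropic. Second, even granting some torus containing $F$, a permutation resolution of $X(T_H)$ produces a quasi-split torus of rank a multiple of $[\mathrm{Gal}(M/K):1]$ times the number of generators, and the proposed ``adjusting by copies of $\mathbb{G}_m$ or $\mathrm{R}_{E/K}(\mathbb{G}_m)$ to pin down the rank'' is not an argument: dropping or adding induced factors does not preserve the embedding of $\widetilde{Z}_0^H$, and nothing in the generic resolution guarantees a permutation module of rank exactly $\mathrm{rank}\,H$ mapping appropriately. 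You acknowledge this yourself by calling it ``the main obstacle,'' so the proposal is an outline with the key lemma missing rather than a proof.

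The paper closes precisely this gap with one specific choice: let $\widetilde{H}_0$ be the quasi-split inner $K$-form of $\widetilde{H}$ and take $T_0$ to be a maximal $K$-torus of $\widetilde{H}_0$ contained in a $K$-defined Borel subgroup. Such a torus is quasi-split (an induced torus; this is the cited result from SGA3, Exp.\ XXIV), has dimension exactly $\mathrm{rank}\,H$, becomes split over $M$ because $\widetilde{H}_0$ is $M$-split, and — since centers of inner forms are canonically $K$-isomorphic — contains $Z(\widetilde{H}_0)\simeq Z(\widetilde{H})\supset\widetilde{Z}_0^H$ as $K$-groups, so the embedding $\iota'$ you need comes for free with no module-theoretic bookkeeping. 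If you replace your permutation-resolution sketch by this choice of $T_0$, the rest of your argument goes through and coincides with the paper's proof.
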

\begin{proof}
Choose a $K$-defined universal cover $\varphi \colon \widetilde{H} \to H$ (cf. \cite[Proposition 2.27]{PlRa2}), set $D = T \times \widetilde{H}$, and consider the $K$-isogeny $\theta \colon D \to G$ obtained by composing the morphism $D \stackrel{\mathrm{id}_T \times \varphi}{\xrightarrow{\hspace{1cm}}} T \times  H$ with the product morphism $T \times H \to G$. Let $\Phi = \ker \theta$, and observe that since the restriction $\theta \vert T$ is injective, the projection to $\widetilde{H}$ identifies $\Phi$ with its image. Now, let $\widetilde{H}_0$  
be the $K$-quasi-split inner twist of $\widetilde{H}$, and let $T_0$ be a maximal $K$-torus of $\widetilde{H}_0$ contained in a $K$-defined Borel subgroup. Then $T_0$ is quasi-split over $K$ (cf. \cite[Expos\'e XXIV, \S3, Proposition 3.13]{SGA3}), becomes split over $M$\footnote{Indeed, it is well-known that an inner form of a split group that becomes quasi-split over an extension of the base field, is actually split over this extension. Thus, $\widetilde{H}_0$ is split over $M$. Then every torus in an $M$-defined Borel subgroup is $M$-split, and in particular, $T_0$ is $M$-split.},  
and $\Phi$ admits a $K$-embedding into $T_0$. Set 
$$
\widetilde{\Phi} = \{ (x , x^{-1}) \in T_0 \times D \, \vert \, x \in \Phi \} \ \ \text{and} \ \ \widetilde{G} = (T_0 \times D)/\widetilde{\Phi}. 
$$
The composite morphism $\widetilde{H} \to D \to \widetilde{G}$
is a $K$-embedding, and we identify $\widetilde{H}$ with the image of this embedding. Furthermore, $\widetilde{G}$ is an almost direct product $\widetilde{T}\widetilde{H}$, where $\widetilde{T}$ is the image of $T_0 \times T \subset T_0 \times D$ in $\widetilde{G}$, and we note that $\dim \widetilde{T} = \ell$. By construction, the composite morphism $T_0 \times D \stackrel{\mathrm{pr}}{\longrightarrow} D \stackrel{\theta}{\longrightarrow} G$ vanishes on $\widetilde{\Phi}$, hence gives rise to a morphism $\nu \colon \widetilde{G} \to G$. Finally, $\ker \nu$ coincides with the image of $T_0 \times \Phi$ in $\widetilde{G}$, which is isomorphic to $T_0$. 
\end{proof}

\begin{remark} 
One can choose an embedding of $\Phi$ into an $M$-split $K$-quasi-split torus $T_0$ in a variety of ways, and for some choices  $\dim T_0$ may be $< \mathrm{rank}\: H$. This will result in  $\widetilde{G} = \widetilde{T} \widetilde{H}$ with $\dim \widetilde{T} < \ell$.
\end{remark}

\subsection{General case.} Let $G = TH$ be a (connected) reductive algebraic group defined over a number field $K$, let $P$ (resp., $M$) be the minimal Galois extension of $K$ over which $T$ splits (resp., $H$ becomes an inner form of a $K$-split group), and set $E = PM$. We now consider the exact sequence (\ref{EQ9}) constructed in Lemma \ref{L:spec_cover}, and let $S \subset V^K$ be any infinite set. 
%
Since $T_0$ is $K$-quasi-split, it follows from Hilbert's Theorem 90 that $\nu(\widetilde{G}(F)) = G(F)$ for every field extension $F/K$, and in particular, $\nu(\widetilde{G}(K_v)) = G(K_v)$ for all $v \in V^K \setminus S$. On the other hand, since $T_0$ is connected, we have $\nu(\widetilde{G}(\mathcal{O}_v)) = G(\mathcal{O}_v)$ for almost all $v \in V^K \setminus S$ (cf. \cite[Proposition 6.5]{PlRa}). It follows that 
\begin{equation}\label{E:surj10}
\nu(\widetilde{G}(\mathbb{A}_{K,S})) = G(\mathbb{A}_{K,S}). 
\end{equation}
Combining this with Proposition \ref{P:simp-con}, we obtain the inclusions 
%
$$
[G(\mathbb{A}_{K,S}) , G(\mathbb{A}_{K,S})] = \nu([\widetilde{G}(\mathbb{A}_{K,S}) , \widetilde{G}(\mathbb{A}_{K,S})]) \subset \nu(\overline{\widetilde{G}(K)}^{(S)}) \subset \overline{G(K)}^{(S)},  
$$
which imply that $\overline{G(K)}^{(S)}$ is a normal subgroup of $G(\mathbb{A}_{K,S})$ with abelian quotient. Besides, it follows from (\ref{E:surj10}) that $\nu$ induces a surjective homomorphism of abelian groups 
$$
\widetilde{G}(\mathbb{A}_{K,S})/\overline{\widetilde{G}(K)}^{(S)} \longrightarrow G(\mathbb{A}_{K,S}) / \overline{G(K)}^{(S)}. 
$$
By construction, $\widetilde{G} = \widetilde{T} \widetilde{H}$ where $\widetilde{T}$ is a $K$-torus of dimension $\ell$ (= rank of $G$) which is isogenous to $T_0 \times T$, hence has $E = PM$ as its (minimal) splitting field, and $\widetilde{H}$ is a semi-simple simply connected $K$-group. So, by Proposition \ref{P:simp-con}, the quotient $\widetilde{G}(\mathbb{A}_{K,S})/\overline{\widetilde{G}(K)}^{(S)}$ is finite if $S$ contains $V^K_{\infty}$ and satisfies $\mathfrak{d}_K(S \cap \mathrm{Spl}(E/K)) > 0$. Furthermore, it  has order dividing 
$$
C(\ell, n, r) := 2^{\ell r} \cdot \tilde{C}(\ell , n) 
$$
if $S$ is a tractable set as in the statement of the theorem. Then the same assertions remain valid for the quotient  $G(\mathbb{A}_{K,S}) / \overline{G(K)}^{(S)}$, completing the proof of Theorem \ref{T:A}. \hfill $\Box$

%
%


\section{A counter-example}\label{ASAfailure}

Let now $G$ be a semi-simple algebraic group defined over a number field $K$, and let $M$ be the minimal Galois extension of $K$ over which $G$ becomes an inner twist of a split group. If $G$ is itself an inner form (i.e., $M = K$) then it follows from Theorem \ref{T:A} that $G$ has almost strong approximation for any $S$ containing $V^K_{\infty}$ and satisfying $\mathfrak{d}_K(S \cap V^K_f) > 0$, and in particular for any tractable $S$ (cf. Corollary \ref{C:1}). In the general case, the statement of the theorem includes the stronger assumption that $\mathfrak{d}_K(S 
\cap \mathrm{Spl}(M/K)) >  0$. The goal of this section is to show that this condition cannot be weakened. More precisely, we will construct an example of an absolutely almost simple {\it adjoint}\footnote{As we already noted, it follows from the classical criterion for strong approximation that a simply connected semi-simple group always has strong approximation with respect to any infinite set $S$.} outer form that fails to have almost strong approximation for a suitable tractable set of valuations $S$ (for which $\mathfrak{d}_K(S \cap V^K_f) > 0$ but $\mathfrak{d}_K(S \cap \mathrm{Spl}(M/K)) = 0$).  

\vskip6pt

\rm{Let $K = \mathbb{Q}$, $L = \mathbb{Q}(i)$ and let $\sigma$ be the nontrivial automorphism in $\mathrm{Gal}(L/\QQ)$. For a set $S$ of valuations of $K$, the corresponding ring of $S$-adeles of $K$ will be denoted simply by $\mathbb{A}_S$. Now, fix an \underline{odd} integer $n \geq 3$, and let $q$ denote an arbitrary nondegenerate {$\sigma$-Hermitian form} on $L^n$. Then the  algebraic group $\widetilde{G} = \mathrm{SU}_n(q)$ associated with the special unitary group of $q$ (cf. \cite[Ch. 2, \S2.3.3]{PlRa2}) is an absolutely almost simple simply connected algebraic $\QQ$-group that is an {\it outer form} of type $\textsf{A}_{n-1}$ (cf. \cite[Ch. 2, \S2.1.14]{PlRa2} and \cite[Ch. 12, \S12.3.8]{Springer}), and  $L$ coincides with the minimal Galois extension $M$ of $\QQ$ over which $\widetilde{G}$ becomes an inner form of the split group. We will be working with the corresponding adjoint group $G$ and its central $\QQ$-isogeny $\pi \colon \widetilde{G} \to G$. Note that $F := \ker \pi$ coincides with $\mathrm{R}^{(1)}_{L/\QQ}(\mu_n)$, where $\mu_n$ is the group of $n$-th roots of unity, and will be identified with the $n$-torsion subgroup of the norm torus $T :=\mathrm{R}_{L/\QQ}^{(1)}(\mathbb{G}_m)$.}

Set
$$
S := \{ v_{\infty}\} \cup\{v_2\}\cup \mathbb{P}_{3(4)}.
$$
Observe that the primes $p \in\mathbb{P}_{3(4)}$ correspond precisely to the valuations in the generalized arithmetic progression $\mathcal{P}(L/\QQ , \{\sigma\})$, so $S$ is a tractable set containing $\{v_\infty\}\cup\mc{P}(L/\QQ, \{\sigma \})$. Note that $\mathfrak{d}_{\mathbb{Q}}(S) = 1/2 > 0$ but $S \cap \mathrm{Spl}(L/\mathbb{Q}) = \varnothing$. 
Our goal is to prove the following.

\begin{thm}\label{noASAexample}
We have $[G(\mathbb{A}_S) : \overline{G(\mathbb{Q})}^{(S)}] = \infty$, and thus $G$ does not have almost strong approximation with respect to $S$.    
\end{thm}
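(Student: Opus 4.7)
The strategy combines strong approximation for the simply connected cover $\widetilde{G}=\mathrm{SU}_n(q)$ with a cohomological analysis of the central extension $1\to F\to \widetilde{G}\xrightarrow{\pi} G\to 1$, and ultimately relies on the failure of ASA for the norm torus $T=\mathrm{R}^{(1)}_{L/\QQ}(\GG_m)$ established in Example \ref{noASAbctechcond} (whose proof carries over verbatim to the present $S$, since the computation $\mathbb{E}_T(S)=\{\pm 1,\pm i\}$ is unaffected by adjoining $v_2$). First, $\widetilde{G}$ has strong approximation with respect to $S$: at any $p\equiv 3\pmod 4$ (of which $S$ contains infinitely many), $p$ is inert in $L$, and the base change $q\otimes_{\QQ}\QQ_p$ is a non-degenerate Hermitian form of rank $n\geq 3$ over the nonarchimedean local quadratic extension $L_p/\QQ_p$, hence isotropic (a simple parity-of-valuations argument on diagonalized forms shows that every Hermitian form of rank $\geq 3$ over such an extension is isotropic), so $\widetilde{G}(\QQ_p)$ is noncompact and the Kneser--Platonov criterion \cite[Theorem 7.12]{PlRa} applies. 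Combining this with the fact that $\pi:\widetilde{G}(\AF(S))\to G(\AF(S))$ is open (hence has open, and therefore closed, image), we obtain
\[
\overline{G(\QQ)}^{(S)} \;=\; G(\QQ)\cdot\pi(\widetilde{G}(\AF(S))).
\]

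Second, the coboundary of $1\to F\to\widetilde{G}\xrightarrow{\pi}G\to 1$ together with Kneser's vanishing $H^1(\QQ_v,\widetilde{G})=1$ at nonarchimedean $v$ produces, for each nonarchimedean $v$, a surjection $\delta_v:G(\QQ_v)\twoheadrightarrow H^1(\QQ_v,F)$ with kernel $\pi(\widetilde{G}(\QQ_v))$. Since $V^{\QQ}\setminus S=\PP_{1(4)}$ is entirely nonarchimedean, assembling these identifies
\[
G(\AF(S))/\overline{G(\QQ)}^{(S)} \;\cong\; \mathrm{coker}\!\left(G(\QQ)\xrightarrow{\delta} {\bigoplus}'_{p\in\PP_{1(4)}} H^1(\QQ_p,F)\right),
\]
the restricted sum being with respect to unramified cohomology. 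The map $\delta$ factors through the localization from $H^1(\QQ,F)$, so it suffices to show the cokernel of this localization is infinite. Using $F=T[n]$ and the Kummer sequence $1\to F\to T\xrightarrow{n}T\to 1$, together with the fact that $H^j(?,T)$ is $2$-torsion ($T$ being the norm torus of the quadratic extension $L/\QQ$) while $n$ is odd, we obtain identifications $H^1(?,F)\cong T(?)/nT(?)$ at every level; for $p\in\PP_{1(4)}$ the local group is $\QQ_p^{\times}/(\QQ_p^{\times})^n$, of order $n^2$ at each of the infinitely many primes $p\equiv 1\pmod{4n}$.

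The theorem thus reduces to showing that the cokernel of
\[
T(\QQ)/nT(\QQ) \;\longrightarrow\; {\bigoplus}'_{p\in\PP_{1(4)}} \QQ_p^{\times}/(\QQ_p^{\times})^n
\]
is infinite, which is the \emph{main obstacle}: this is the ``modulo $n$-th powers'' shadow of the failure of ASA for $T$ proved in Example \ref{noASAbctechcond}. The natural route is to observe that the compact subgroup $\mathbb{U}_T(S)=\prod_{p\in\PP_{1(4)}}\ZZ_p^{\times}$ injects into the target modulo $n$-th powers as $\bigoplus_{p\in\PP_{1(4)}}\mu_{p-1}/\mu_{p-1}^n$, which contains a nontrivial $\ZZ/n$-factor at each prime $p\equiv 1\pmod{4n}$ and is therefore infinite; since $\mathbb{E}_T(S)$ is finite by Example \ref{noASAbctechcond}, its image modulo $n$-th powers contributes only a finite subgroup, so the ``unit contribution'' to the cokernel is already infinite. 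To complete the argument one must separate this unit contribution from the ``valuation contribution'' coming from the remaining global image $T(\QQ)/nT(\QQ)=L^{\times}/(\QQ^{\times}(L^{\times})^n)$ (via Hilbert 90); the cleanest implementation is Poitou--Tate duality applied to the finite $\QQ$-group scheme $F$, whose Cartier dual $F^{\vee}$ sits in $1\to \ZZ/n\to \mathrm{R}_{L/\QQ}(\ZZ/n)\to F^{\vee}\to 1$ with $H^1(\QQ,\mathrm{R}_{L/\QQ}(\ZZ/n))\cong H^1(L,\ZZ/n)$ infinite by Shapiro, forcing infinite cokernel even after restricting the adelic product to $p\in\PP_{1(4)}$ (the finitely generated contribution from $v\in S$ being controlled by Artin-reciprocity considerations of the kind already encountered in Example \ref{exampleASAindex}). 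Alternatively, one can bypass Poitou--Tate and argue directly in the style of Example \ref{noASAbctechcond}, refined to keep track of $n$-th powers prime by prime.
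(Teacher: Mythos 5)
Your overall architecture --- strong approximation for $\widetilde{G}$ at the inert primes, the coboundary maps into $H^1(\cdot\,,F)$, and a reduction to the norm torus $T$ modulo $n$-th powers --- is essentially the paper's, but two steps have genuine gaps. First, $\pi(\widetilde{G}(\AF(S)))$ is \emph{not} open in $G(\AF(S))$: any open subgroup of the $S$-adelic group contains $G(\ZZ_p)$ for almost all $p$, whereas $\psi_{\QQ_p}(G(\ZZ_p))$ is a nontrivial (unramified) subgroup of $H^1(\QQ_p,F)$ of order $n$ for the infinitely many $p\equiv 1 \pmod{4n}$, so $G(\ZZ_p)\not\subset\pi(\widetilde{G}(\QQ_p))$ at those primes. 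The image is closed (it is $\ker\psi_{\AF(S)}$ by the exactness of (\ref{piandpsi})), but the product $G(\QQ)\cdot\pi(\widetilde{G}(\AF(S)))$ with the countable group $G(\QQ)$ need not be closed, so your identity $\overline{G(\QQ)}^{(S)}=G(\QQ)\cdot\pi(\widetilde{G}(\AF(S)))$ and the ensuing identification of $G(\AF(S))/\overline{G(\QQ)}^{(S)}$ with a naive algebraic cokernel are unjustified --- and the logic runs the wrong way: proving that a subgroup \emph{possibly smaller} than the closure has infinite index says nothing about the closure. To bound the closure from above one needs closed subgroups of large index containing it; this is exactly why the paper works with the open subgroups $G(\QQ)\cdot U_{\ell}$ (Lemma \ref{Fact2}) and shows that their finite indices tend to infinity.

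Second, the quantitative heart of the argument --- the paper's Lemma \ref{Fact1}, i.e. $[T(\AF(S)) : T(\QQ)\cdot\Delta(\Pi_\ell)\cdot T(\AF(S))^n]\geq n^{\ell}/4$ --- is not actually proved in your proposal. Your ``unit contribution is infinite'' observation is insufficient, because global norm-one elements that are non-units at split primes can interfere; separating the unit and valuation contributions is precisely the content of $T(\QQ)\cap\Delta=T(\ZZ)=\{\pm1,\pm i\}$, combined with $T(\AF(S))=T(\QQ)\cdot\Delta$ (class number one of $\QQ(i)$) and $[T(\ZZ_p):T(\ZZ_p)^n]=n$ for $p\equiv1\pmod{4n}$. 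Your fallback via Poitou--Tate addresses only an algebraic cokernel over a restricted set of places (and the inference from the infinitude of $H^1(L,\ZZ/n)$ to an infinite cokernel there is a non sequitur), and in any case it says nothing about the topological closure, which is the point; the alternative ``argue directly, refined to keep track of $n$-th powers prime by prime'' is exactly the missing work. The raw inputs you cite (Example \ref{noASAbctechcond} does carry over to the present $S$, as you note) are the right ones, but as written both the key index estimate and the mechanism converting it into a statement about $\overline{G(\QQ)}^{(S)}$ are absent.
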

 
\vskip1mm

The proof is based on the observation that for every open subgroup $U$ of $G(\mathbb{A}_S)$, we have the inclusion: 
$$G(\mathbb{Q})\cdot U \supset \overline{G(\mathbb{Q})}^{(S)}.$$  
So, to prove the theorem it suffices to find a sequence of open subgroups $U_1, U_2, \ldots$ such that the products $G(\mathbb{Q})\cdot U_{\ell}$ are all subgroups of $G(\mathbb{A}_S)$ with the indices
\begin{equation}\label{E:X4}
[G(\mathbb{A}_S) : G(\mathbb{Q})\cdot U_{\ell}] \longrightarrow \infty \ \ \text{as} \ \ \ell \longrightarrow \infty.
\end{equation}
The implementation of this idea requires some preparation.

First, it is known that the quasi-split torus $T_0 := \mathrm{R}_{L/\QQ}(\mathbb{G}_m)$ fails to have strong approximation with respect to $S$, and in fact the quotient
$T_0(\mathbb{A}_S)/\overline{T_0(\QQ)}^{(S)}$ has infinite exponent (cf. \cite[Proposition 4]{PrRa-Irr}). For our purposes, we need a statement along these lines for the norm torus $T=\mathrm{R}_{L/\QQ}^{(1)}(\GG_m)$. We have already seen in Example \ref{noASAbctechcond} that $T$ does not have almost strong approximation with respect to $S$ but here we will prove a somewhat stronger statement. To formulate it, 
for every integer $\ell \geq 1$, 
we choose a subset $\Pi_\ell \subset \mathbb{P}_{1(4n)}$ 
of size $\ell$, and then consider the following subgroups of $T(\AF_S)$:
$$
\Gamma(\Pi_\ell) := \prod_{p \in \Pi_\ell} T(\mathbb{Z}_p)\times\prod_{p\in\mathbb{P}_{1(4)}\setminus\Pi_\ell}\{1\},$$ 
$$\Delta(\Pi_\ell) := \prod_{p \in \Pi_\ell} T(\mathbb{Z}_p)^n \times \prod_{p \in \mathbb{P}_{1(4)} \setminus \Pi_\ell} T(\mathbb{Z}_p),
$$
where we consider the standard integral structure on $T$ for which $T(\mathbb{Z}_p)$ is the maximal compact subgroup of $T(\mathbb{Q}_p)$ for all $p$ and $T(\ZZ_p)^n$ denotes the subgroup of $n$th powers in $T(\ZZ_p)$. We note that $\Gamma(\Pi_\ell) \cap \Delta(\Pi_\ell) = \Gamma(\Pi_\ell)^n$ and that the product $\Gamma(\Pi_\ell)\cdot\Delta(\Pi_\ell)$ coincides with $\Delta := \prod_{p \in \mathbb{P}_{1(4)}} T(\mathbb{Z}_p)$.

\vskip1mm
\begin{lemma}\label{Fact1}
{\it We have
$$
i(\Pi_\ell) := [T(\mathbb{A}_S) : T(\mathbb{Q})\cdot \Delta(\Pi_\ell) \cdot T(\mathbb{A}_S)^n] \longrightarrow \infty \ \ \text{as} \ \ \ell \longrightarrow \infty
$$
for any choice of $\Pi_\ell$.}
\end{lemma}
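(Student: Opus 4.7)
The plan is to establish the lower bound $i(\Pi_\ell) \geqslant n^\ell/4$, which tends to infinity since $n \geqslant 3$. The first step is to enlarge $\Delta(\Pi_\ell)$ to $\Delta := \prod_{p \in \PP_{1(4)}} T(\ZZ_p)$, set $H_0 := T(\QQ) \cdot \Delta(\Pi_\ell) \cdot T(\AF(S))^n$ and $H_1 := T(\QQ) \cdot \Delta \cdot T(\AF(S))^n$, and observe that $i(\Pi_\ell) \geqslant [H_1 : H_0]$. Since $T$ is abelian we have $H_1 = H_0 \cdot \Delta$, and the second isomorphism theorem gives $[H_1 : H_0] = [\Delta : \Delta \cap H_0]$. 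A short diagram chase shows $\Delta \cap H_0 = \Delta(\Pi_\ell) \cdot H$, where $H := \Delta \cap (T(\QQ) \cdot T(\AF(S))^n)$. Because each $p \in \Pi_\ell \subset \PP_{1(4n)}$ splits in $L = \QQ(i)$ and satisfies $n \mid p-1$, we have $T(\ZZ_p)/T(\ZZ_p)^n \cong \ZZ_p^\times/(\ZZ_p^\times)^n \cong \ZZ/n$ and hence $\Delta/\Delta(\Pi_\ell) \cong (\ZZ/n)^\ell$. It therefore suffices to show that the image $\overline{H}$ of $H$ in $(\ZZ/n)^\ell$ has order bounded independently of $\ell$.

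For $x = q a^n \in H$ with $q \in T(\QQ)$ and $a \in T(\AF(S))$, the condition $x \in \Delta$ forces $q_p \in T(\ZZ_p) \cdot T(\QQ_p)^n$ at every $p \in \PP_{1(4)}$; and using $T(\ZZ_p) \cap T(\QQ_p)^n = T(\ZZ_p)^n$ (which follows from $v_p(y^n) = n\, v_p(y)$ in $\QQ_p^\times$), a short local computation shows that the image of $x_p$ in $T(\ZZ_p)/T(\ZZ_p)^n$ coincides with the image of $q_p$ under the natural isomorphism $T(\ZZ_p) \cdot T(\QQ_p)^n / T(\QQ_p)^n \cong T(\ZZ_p)/T(\ZZ_p)^n$. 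Consequently $\overline{H}$ equals the image in $(\ZZ/n)^\ell$ of the subgroup
\[
V := \{\, q \in T(\QQ) : q \in T(\ZZ_p) \cdot T(\QQ_p)^n \text{ for every } p \in \PP_{1(4)} \,\},
\]
and since $T(\QQ)^n \subset V$ is in the kernel of this projection, we get $|\overline{H}| \leqslant |V/T(\QQ)^n|$.

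The heart of the argument, and the step I expect to be the main obstacle, is to show $V \subset \{\pm 1, \pm i\} \cdot T(\QQ)^n$, which relies on the fact that $\ZZ[i]$ is a PID. For $\alpha \in V$, the norm-one condition $N_{L/\QQ}(\alpha) = 1$ makes $w_\mathfrak{q}(\alpha) = 0$ at every inert and ramified prime, while the definition of $V$ forces $n \mid w_\mathfrak{p}(\alpha)$ at each split $\mathfrak{p}$; combined with $\sigma((\alpha)) = -(\alpha)$ (again from the norm condition), we get $(\alpha) = n\mathfrak{b}$ with $\sigma(\mathfrak{b}) = -\mathfrak{b}$. Since $h(\ZZ[i]) = 1$, we can write $\mathfrak{b} = (\delta)$ for some $\delta \in L^\times$, unique up to $\ZZ[i]^\times = \{\pm 1, \pm i\}$, whence $\alpha = u\delta^n$ for some such unit $u$. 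Taking norms yields $N(\delta)^n = 1$, and because $n$ is \emph{odd} this forces $N(\delta) = 1$, i.e.\ $\delta \in T(\QQ)$. Hence $|V/T(\QQ)^n| \leqslant 4$, so $[H_1 : H_0] \geqslant n^\ell/4 \to \infty$ and thus $i(\Pi_\ell) \to \infty$. Both the oddness of $n$ and the refined condition $\Pi_\ell \subset \PP_{1(4n)}$ (not merely $\PP_{1(4)}$) are essential to the argument.
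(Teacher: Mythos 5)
Your proof is correct, and it reaches the paper's bound $i(\Pi_\ell)\geq n^{\ell}/4$ by a somewhat different route. The paper first establishes the equality $T(\mathbb{A}(S)) = T(\mathbb{Q})\cdot\Delta$ (deduced from the class number one of $\mathbb{Q}(i)$ applied to the quasi-split torus $\mathrm{R}_{L/\mathbb{Q}}(\mathbb{G}_m)$ together with the local surjectivity of $s \mapsto \sigma(s)s^{-1}$ over $\mathbb{Z}_p$ at split $p$); this equality absorbs $T(\mathbb{A}(S))^n$ into $T(\mathbb{Q})\cdot\Delta(\Pi_\ell)$, so the whole computation collapses to $[\Delta : \Delta(\Pi_\ell)\cdot(T(\mathbb{Q})\cap\Delta)]$ with $T(\mathbb{Q})\cap\Delta = T(\mathbb{Z}) = \{\pm 1,\pm i\}$ giving the factor $4$. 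You instead forgo that adelic decomposition (you only need the lower bound through $H_0 \subset H_1$), at the price of having to control the larger ``local-to-global'' group $V$ of norm-one rationals that are unit-times-$n$th-power at every $p \in \mathbb{P}_{1(4)}$; your key lemma $V \subset \{\pm1,\pm i\}\cdot T(\mathbb{Q})^n$ is proved by factoring the ideal $(\alpha)$ in the PID $\mathbb{Z}[i]$ and using that $n$ is odd to force $N(\delta)=1$. So the two arguments use the same arithmetic inputs ($h(\mathbb{Q}(i))=1$, the unit group $\mu_4$, and $[T(\mathbb{Z}_p):T(\mathbb{Z}_p)^n]=n$ for $p \in \mathbb{P}_{1(4n)}$), but deploy them differently: the paper's route is shorter once the class-number identity for the norm torus is in hand and does not need oddness of $n$ at this point (oddness only enters later, in Lemma 7.5(i)), while yours is more self-contained, computes the exact obstruction group $V/T(\mathbb{Q})^n$ directly, and requires the extra local observation $T(\mathbb{Z}_p)\cap T(\mathbb{Q}_p)^n = T(\mathbb{Z}_p)^n$ to transfer classes from $T(\mathbb{Q}_p)/T(\mathbb{Q}_p)^n$ back to $T(\mathbb{Z}_p)/T(\mathbb{Z}_p)^n$. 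Both correctly exploit the refined choice $\Pi_\ell \subset \mathbb{P}_{1(4n)}$.
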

\vskip1mm

\noindent {\it Proof.}  
Since the class number of $L$ is one, we have 
\begin{equation}\label{E:WWW1}
T_0(\mathbb{A}_S) = T_0(\mathbb{Q})\cdot\Delta_0 \ \ \text{where} \ \  \Delta_0 = \prod_{p \in \mathbb{P}_{1(4)}} T_0(\mathbb{Z}_p)
\end{equation}
(cf. \cite[\S 5.1 and 8.1]{PlRa}). 
For each $p \in\mathbb{P}_{1(4)}$, there exists a $\mathbb{Z}_p$-defined isomorphism $T_0 \simeq \mathbb{G}_m\times \mathbb{G}_m$, with $\sigma$ acting by switching the components, and then $T \simeq \{ (t , t^{-1}) \, \vert \, t \in \mathbb{G}_m \}$. It follows that every $t \in T(\mathbb{Q}_p)$ (resp., $\in T(\mathbb{Z}_p)$) can be written in the form $t = \sigma(s)s^{-1}$ for some $s \in T_0(\mathbb{Q}_p)$ (resp., $\in T_0(\mathbb{Z}_p)$), and therefore all elements $t \in T(\mathbb{A}_S)$ are of the form $\sigma(s)s^{-1}$ for some $s \in T_0(\mathbb{A}_S)$. In conjunction with (\ref{E:WWW1}), this yields 
\begin{equation}\label{E:WWW2}
T(\mathbb{A}_S) = T(\mathbb{Q})\cdot\Delta.
\end{equation}
In particular, $T(\mathbb{A}_S)^n = T(\mathbb{Q})^n\cdot\Delta^n$, and since $\Delta(\Pi_\ell) \supset \Delta^n$, we obtain 

$$
i(\Pi_\ell) = [T(\Q)
\cdot\Delta  : T(\mathbb{Q})\cdot\Delta(\Pi_\ell)] = [\Delta : \Delta(\Pi_\ell) \cdot (T(\mathbb{Q}) \cap \Delta)].
$$
We now observe that since $T(\mathbb{Q}_p) = T(\mathbb{Z}_p)$ for all $p \in \mathbb{P}\setminus\mathbb{P}_{1(4)}$, we have 
$$
\Gamma := T(\mathbb{Q}) \cap \Delta = T(\mathbb{Q}) \cap \prod_{p\in \mathbb{P}} T(\mathbb{Z}_p) = T(\mathbb{Z}) = \{ \pm 1, \pm i \}.
$$
As $\Delta = \Gamma(\Pi_\ell) \cdot \Delta(\Pi_\ell)$, we now obtain that 
$$
i(\Pi_\ell) = [\Gamma(\Pi_\ell) : \Gamma(\Pi_\ell) \cap (\Gamma \cdot \Delta(\Pi_\ell))] = \frac{[\Gamma(\Pi_\ell) : \Gamma(\Pi_\ell) \cap \Delta(\Pi_\ell)]}{[\Gamma(\Pi_\ell) \cap (\Gamma \cdot \Delta(\Pi_\ell)) : \Gamma(\Pi_\ell) \cap \Delta(\Pi_\ell)]]}
$$
$$
\geq [\Gamma(\Pi_\ell) : \Gamma(\Pi_\ell)^n] / 4. 
$$
But for any $p \in \mathbb{P}_{1(4n)}$, using a $\mathbb{Z}_p$-isomorphism $T \simeq \mathbb{G}_m$, we have

$$
[T(\mathbb{Z}_p) : T(\mathbb{Z}_p)^n] = [ \mathbb{Z}_p^{\times} : {\mathbb{Z}_p^{\times}}^n] = [\mathbb{F}_p^{\times} : {\mathbb{F}_p^{\times}}^n] = n
$$
Thus, we get the estimate
$$
i(\Pi_\ell) \geq n^{\ell} / 4,
$$
and our assertion follows. 
\hfill\(\Box\)

\vskip1mm

To prove Theorem \ref{noASAexample}, we will apply a variation of techniques developed in \cite[\S 8.2]{PlRa} to compute class numbers of semi-simple groups of noncompact type. We start with the exact sequence of $\QQ$-groups and $\QQ$-morphisms
\begin{equation}\label{E:X1}
1 \xrightarrow{} F \xrightarrow{} \widetilde{G} \xrightarrow{\pi} G \xrightarrow{} 1,
\end{equation}
which for every extension $P/\mathbb{Q}$ gives rise to the coboundary map $\psi_P \colon G(P) \to H^1(P , F)$. Applying this to $P = \mathbb{Q}_p$ and taking the product over all $p \in \mathbb{P}_{1(4)}$, we obtain an exact sequence 
\begin{equation}\label{prodpsipi}
\prod_{p\in \mathbb{P}_{1(4)}}\widetilde{G}(\QQ_p)\xrightarrow{\Pi = \prod_{p}\pi_{\QQ_p}}\prod_{p\in\mathbb{P}_{1(4)}}G(\QQ_p)\xrightarrow{\Psi = \prod_{p}\psi_{\QQ_p}}\prod_{p\in\mathbb{P}_{1(4)}}H^1(\QQ_p,F).
\end{equation}
We let $\pi_{\mathbb{A}_S}$ and $\psi_{\mathbb{A}_S}$ denote the restrictions of $\Pi$ and $\Psi$ to $\widetilde{G}(\mathbb{A}_S)$ and $G(\mathbb{A}_S)$, respectively. Arguing as in the proof of \cite[Proposition 8.8]{PlRa}, one shows that 
$$\Big(\prod_{p\in\mathbb{P}_{1(4)}}\pi_{\QQ_p}(\widetilde{G}(\QQ_p))\Big)\cap G(\AF_S)=\pi_{\AF_S}(\widetilde{G}(\AF_S)),$$
which implies the exactness of the following sequence of groups and group homomorphisms
\begin{equation}\label{piandpsi}
\widetilde{G}(\AF_S)\xrightarrow{\pi_{\AF_S}}G(\AF_S)\xrightarrow{\psi_{\AF_S}}\prod_{p\in\mathbb{P}_{1(4)}}H^1(\QQ_p,F).
\end{equation}
\begin{lemma}\label{Fact2}
{\it For every open subgroup $U$ of $G(\mathbb{A}_S)$,

\ {\rm (i)} The product $G(\QQ) \cdot U$ is a normal subgroup of $G(\mathbb{A}_S)$;

{\rm (ii)} The map $\psi_{\mathbb{A}_S}$ induces a group  isomorphism of quotients $$G(\mathbb{A}_S)/ (G(\QQ)\cdot U) \simeq \psi_{\mathbb{A}_S}(G(\mathbb{A}_S)) / \psi_{\mathbb{A}_S}(G(\QQ) \cdot U).$$}
\end{lemma}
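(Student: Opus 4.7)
The plan is to leverage the fact that $F=\ker\pi$ is the center of the simply connected group $\widetilde{G}$, which forces each coboundary $\psi_{\QQ_p}\colon G(\QQ_p)\to H^1(\QQ_p,F)$ to be an \emph{honest group homomorphism} rather than just a map of pointed sets; consequently $\Psi$ and its restriction $\psi_{\AF(S)}$ are group homomorphisms into the abelian group $\prod_{p\in\mathbb{P}_{1(4)}}H^1(\QQ_p,F)$. By the exactness of \eqref{piandpsi}, the kernel of $\psi_{\AF(S)}$ is precisely $N:=\pi_{\AF(S)}(\widetilde{G}(\AF(S)))$, which is therefore a normal subgroup of $G(\AF(S))$ with abelian quotient.

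The crux of the proof is to establish the inclusion
$$
N\;\subseteq\;G(\QQ)\cdot U
$$
for every open subgroup $U\subseteq G(\AF(S))$. For this I would invoke the classical criterion for strong approximation applied to the absolutely almost simple simply connected group $\widetilde{G}=\mathrm{SU}_n(q)$: since any nondegenerate Hermitian form of dimension $\geq 2$ over a local quadratic extension is isotropic, $\widetilde{G}$ is $\QQ_v$-isotropic at every finite place $v$, and $v_2\in S$, so strong approximation with respect to $S$ holds and $\widetilde{G}(\QQ)$ is dense in $\widetilde{G}(\AF(S))$. The set $V:=\pi_{\AF(S)}^{-1}(U)$ is then an open neighborhood of the identity in $\widetilde{G}(\AF(S))$ (preimage of the open set $U$ under the continuous map $\pi_{\AF(S)}$), so density yields $\widetilde{G}(\AF(S))=\widetilde{G}(\QQ)\cdot V$. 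Applying $\pi_{\AF(S)}$ and using $\pi_{\AF(S)}(V)\subseteq U$ together with $\pi(\widetilde{G}(\QQ))\subseteq G(\QQ)$ gives the desired inclusion.

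Part (i) now follows painlessly: the inclusion just proved shows that $G(\QQ)\cdot U$ contains $N$, so its image in the abelian quotient $G(\AF(S))/N$ is a subgroup (namely, the product of the images of the subgroups $G(\QQ)$ and $U$), and such a subgroup is automatically normal; pulling back shows that $G(\QQ)\cdot U$ is itself a normal subgroup of $G(\AF(S))$. For (ii), the homomorphism $\psi_{\AF(S)}$ induces a surjection $G(\AF(S))/(G(\QQ)\cdot U)\twoheadrightarrow \psi_{\AF(S)}(G(\AF(S)))/\psi_{\AF(S)}(G(\QQ)\cdot U)$, and injectivity follows because if $\psi_{\AF(S)}(g)=\psi_{\AF(S)}(h)$ for some $h\in G(\QQ)\cdot U$, then $gh^{-1}\in\ker\psi_{\AF(S)}=N\subseteq G(\QQ)\cdot U$, whence $g\in G(\QQ)\cdot U$. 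The main obstacle I anticipate is the careful verification of the key inclusion $N\subseteq G(\QQ)\cdot U$, which is where strong approximation for $\widetilde{G}$ does the real work; once this is in place, parts (i) and (ii) are entirely formal consequences of the homomorphism property of $\psi_{\AF(S)}$.
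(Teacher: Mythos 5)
Your proposal is correct and takes essentially the same route as the paper: strong approximation for $\widetilde{G}$ with respect to the infinite set $S$ plus continuity of $\pi_{\AF(S)}$ gives $\ker\psi_{\AF(S)}=\pi_{\AF(S)}(\widetilde{G}(\AF(S)))\subset G(\QQ)\cdot U$, after which (i) and (ii) are formal consequences of $\psi_{\AF(S)}$ being a homomorphism into an abelian group (the paper phrases (i) via ``a product of subgroups containing the commutator subgroup is normal'' and (ii) via the third isomorphism theorem, which is exactly your quotient-by-$N$ argument). One small inaccuracy: nondegenerate Hermitian forms of rank $2$ over a quadratic extension of a local field can be anisotropic, so isotropy at every finite place requires $n\geq 3$ (which holds here); alternatively, as in the paper, the infinitude of $S$ alone already guarantees strong approximation for $\widetilde{G}$.
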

\vskip1mm 

\noindent {\it Proof.} 
We argue as in the proof of \cite[Proposition 8.8]{PlRa}. 
First, as we already mentioned, since $S$ is infinite,  $\widetilde{G}$ has strong approximation with respect $S$, and therefore $\widetilde{G}(\mathbb{A}_S) = \widetilde{G}(\mathbb{Q}) \cdot \pi^{-1}_{\mathbb{A}_S}(U)$ as $\pi_{\mathbb{A}_S}$ is continuous (cf. \cite[\S 5.1]{PlRa2}). Combined with the exactness of (\ref{piandpsi}), this yields 
\begin{equation}\label{E:WWW3}
\ker \psi_{\mathbb{A}_S} = \pi_{\mathbb{A}_S}(\widetilde{G}(\mathbb{A}_S)) \subset G(\mathbb{Q}) \cdot U. 
\end{equation}
Since $\psi_{\mathbb{A}_S}$ is a group homomorphism from $G(\mathbb{A}_S)$ to a commutative group, its kernel contains the commutator subgroup $[G(\mathbb{A}_S) , G(\mathbb{A}_S)]$. On the other hand, if the product of two subgroups of an abstract group contains the commutator subgroup of the group, the product is actually a normal subgroup. In conjunction with (\ref{E:WWW3}), these observations imply (i), and then (ii) follows from the third isomorphism theorem. 
\hfill\(\Box\)

Next, we will apply a similar argument to the Kummer sequence for $T$:
\begin{equation}\label{E:X2} 
1 \to F \longrightarrow T \stackrel{[n]}{\longrightarrow} T \to 1,
\end{equation}
where $[n]$ denotes the morphism of raising to the $n$th power. Again, for any field extension $P/\mathbb{Q}$, we have the coboundary map $\delta_P \colon T(P) \to H^1(P , F)$ noting that $\ker \delta_P = T(P)^n$. Taking the product over all $p \in \mathbb{P}_{1(4)}$, and restricting the maps to $T(\mathbb{A}_S)$, we obtain an exact sequence similar to (\ref{piandpsi}):  
\begin{equation}\label{E:WWW4}
T(\mathbb{A}_S) \xrightarrow{[n]} T(\mathbb{A}_S) \xrightarrow{\delta_{\mathbb{A}_S}} \prod_{p \in \mathbb{P}_{1(4)}} H^1(\mathbb{Q}_p , F).
\end{equation}

\vskip1mm 

\begin{lemma}\label{Fact3}
{\rm (i)} For any field extension $P/\QQ$, the coboundary map $\delta_P \colon T(P) \to H^1(P , F)$ is surjective.  

\vskip0.7mm 
\ {\rm (ii)} There exists a \underline{finite} set of primes $\Omega$ such that for all $p \in \mathbb{P}\setminus\Omega$ we have 
$$
\psi_{\mathbb{Q}_p}(G(\mathbb{Z}_p)) = 
\delta_{\mathbb{Q}_p}(T(\mathbb{Z}_p)).   
$$
\vskip1mm 
{\rm (iii)} $\psi_{\mathbb{A}_S}(G(\mathbb{A}_S))=\delta_{\mathbb{A}_S}(T(\mathbb{A}_S))$. 
\end{lemma}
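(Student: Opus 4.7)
For (i), the plan is to apply the long exact Kummer sequence associated with $1 \to F \to T \xrightarrow{[n]} T \to 1$ and observe that surjectivity of $\delta_P$ is equivalent to the vanishing of the $n$-torsion subgroup $H^1(P,T)[n]$. Setting $L' = L\otimes_\QQ P$, I would use the norm sequence $1 \to T \to \mathrm{R}_{L'/P}(\GG_m) \xrightarrow{N} \GG_m \to 1$ together with Hilbert's Theorem 90 (via Shapiro's lemma) to identify $H^1(P,T)$ with $P^\times/N_{L'/P}((L')^\times)$ in the nonsplit case, while $H^1(P,T) = 0$ when $L'$ splits as $P \times P$. Since $a^2 = N_{L'/P}(a)$ for every $a \in P^\times \subset L'$, this quotient has exponent dividing $2$; combined with the standing hypothesis that $n$ is odd, one concludes $H^1(P,T)[n] = 0$, yielding the required surjectivity.

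For (ii), I would let $\Omega$ contain $2$, every prime dividing $n$, every prime ramified in $L$, and every prime of bad reduction for $G$, $\widetilde{G}$, $T$, or $F$. For $p \notin \Omega$ the two sequences (\ref{E:X1}) and (\ref{E:X2}) extend to short exact sequences of smooth affine $\ZZ_p$-group schemes $1 \to \mathcal{F} \to \widetilde{\mathcal{G}} \to \mathcal{G} \to 1$ and $1 \to \mathcal{F} \to \mathcal{T} \xrightarrow{[n]} \mathcal{T} \to 1$, with $\mathcal{F}$ étale. Taking étale cohomology produces maps $G(\ZZ_p) \to H^1_{\mathrm{\acute{e}t}}(\ZZ_p,\mathcal{F})$ and $T(\ZZ_p) \to H^1_{\mathrm{\acute{e}t}}(\ZZ_p,\mathcal{F})$ whose cokernels inject respectively into $H^1_{\mathrm{\acute{e}t}}(\ZZ_p,\widetilde{\mathcal{G}})$ and $H^1_{\mathrm{\acute{e}t}}(\ZZ_p,\mathcal{T})$; both of these cohomology groups vanish since, by the henselian comparison theorem for smooth group schemes with connected fibers, they agree with $H^1(\FF_p,\widetilde{\mathcal{G}}_{\FF_p})$ and $H^1(\FF_p,\mathcal{T}_{\FF_p})$, which are trivial by Lang's theorem. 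The natural identification $H^1_{\mathrm{\acute{e}t}}(\ZZ_p,\mathcal{F}) = H^1_{\mathrm{nr}}(\QQ_p,F)$ then embeds both images into $H^1(\QQ_p,F)$ as the same unramified subgroup, giving (ii).

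For (iii), I would first observe that for each $p \in \PP_{1(4)}$ the exact sequence (\ref{E:X1}) combined with Kneser's theorem $H^1(\QQ_p,\widetilde{G}) = 0$ (valid for simply connected groups over nonarchimedean local fields of characteristic zero) yields $\psi_{\QQ_p}(G(\QQ_p)) = H^1(\QQ_p,F)$; together with (i) this gives the local identity $\psi_{\QQ_p}(G(\QQ_p)) = \delta_{\QQ_p}(T(\QQ_p))$. The image $\psi_{\AF(S)}(G(\AF(S)))$ then equals the restricted direct product of the $\psi_{\QQ_p}(G(\QQ_p))$ taken with respect to the distinguished subgroups $\psi_{\QQ_p}(G(\ZZ_p))$, and an analogous description holds for $\delta_{\AF(S)}(T(\AF(S)))$. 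By (ii) the distinguished subgroups $\psi_{\QQ_p}(G(\ZZ_p))$ and $\delta_{\QQ_p}(T(\ZZ_p))$ coincide for almost all $p$, while the full local images agree for every $p$, so the two restricted products coincide.

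The main technical hurdle will be part (ii): moving from Galois cohomology of $\QQ_p$ to étale cohomology over $\mathrm{Spec}\,\ZZ_p$ and establishing the vanishing of $H^1_{\mathrm{\acute{e}t}}(\ZZ_p,\widetilde{\mathcal{G}})$ and $H^1_{\mathrm{\acute{e}t}}(\ZZ_p,\mathcal{T})$ for almost all $p$. These vanishings are standard consequences of Hensel's lemma and Lang's theorem, but one must be careful to track the finite list of excluded primes, in particular the condition $p \nmid n$ that guarantees étaleness of $\mathcal{F}$.
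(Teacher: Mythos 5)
Your proposal is correct and follows essentially the same route as the paper: part (i) via the Kummer sequence together with the exponent-$2$ bound on $H^1(P,T)$ against odd $n$, part (iii) via the vanishing of $H^1(\QQ_p,\widetilde{G})$ for the simply connected group combined with (i) and (ii) and the restricted-product description of the adelic images. The only difference is in (ii), where the paper simply invokes \cite[Proposition 6.4]{PlRa} to identify both $\psi_{\QQ_p}(G(\ZZ_p))$ and $\delta_{\QQ_p}(T(\ZZ_p))$ with the unramified cohomology $H^1(\QQ_p^{\mathrm{ur}}/\QQ_p,F)$ for almost all $p$, while you re-derive that statement directly from smooth models, Hensel's lemma and Lang's theorem — the same underlying argument, just carried out rather than cited.
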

\vskip1mm 
\noindent {\it Proof.} (i) In view of the exact sequence 
$$
T(P) \stackrel{\delta_P}{\longrightarrow} H^1(P , F) \stackrel{\omega}{\longrightarrow} H^1(P , T) 
$$
coming from (\ref{E:X2}), it is enough to show that $\im \omega$ is trivial. By \cite[Lemma 2.22]{PlRa}, the group $H^1(P , T)$ is isomorphic to $P^{\times}/N_{L \otimes_{\mathbb{Q}} P/P}((L \otimes_{\mathbb{Q}} P)^{\times})$, and therefore has exponent $\leq 2$. On the other hand, $F$ is a cyclic group of order $n$, so $H^1(P , F)$ is annihilated by $n$. Since by our assumption $n$ is {\it odd}, the triviality of $\im \omega$ follows.  

\vskip1mm

(ii) Let $\QQ^{\mathrm{ur}}_p$ be the maximal unramified extension of $\QQ_p$. It follows from \cite[Proposition 6.4]{PlRa} that there exists a finite subset $\Omega \subset \mathbb{P}$ such that for $p \in \mathbb{P} \setminus \Omega$ 
we have $F(\overline{\QQ}) = F(\QQ^{\mathrm{ur}}_p)$ where $\overline{\QQ}$ denotes some fixed algebraic closure of $\QQ$ and 
$$
\psi_{\QQ_p}(G(\ZZ_p)) = H^1(\QQ^{\mathrm{ur}}_p/\QQ_p , F) = 
\delta_{\QQ_p}(T(\ZZ_p)). 
$$

\vskip1mm 

(iii) For every prime $p$, we have the exact sequence
$$
G(\mathbb{Q}_p) \stackrel{\psi_{\mathbb{Q}_p}}{\longrightarrow} H^1(\mathbb{Q}_p , F) \longrightarrow H^1(\mathbb{Q}_p , \widetilde{G}).
$$ 
Since $\widetilde{G}$ is semi-simple and simply connected, we have $H^1(\mathbb{Q}_p , \widetilde{G}) = 1$ (cf. \cite[Theorem 6.4]{PlRa}), so 
$$
\psi_{\QQ_p}(G(\QQ_p)) = H^1(\QQ_p , F) = \delta_{\QQ_p}(T(\QQ_p))
$$
in view of part (i). In conjunction with part (ii), this yields our claim. \hfill $\Box$

\vskip1mm 

\noindent{\it Proof of Theorem \ref{noASAexample}.}

Let $\Omega$ be the exceptional set of primes in Lemma \ref{Fact3}(ii). For $\ell \geq 1$, pick a subset $\Pi_\ell \subset \mathbb{P}_{1(4n)} \setminus \Omega$ of size $\ell$ and set $\Pi^*_\ell := \Pi_\ell \cup \Omega$. 

We will show that the open subgroups 
$$U_{\ell} := \prod_{p \in \Pi^*_\ell} \pi_{\QQ_p}(\widetilde{G}(\mathbb{Q}_p)) \times \prod_{p \in \mathbb{P}_{1(4)} \setminus \Pi^*_\ell} G(\mathbb{Z}_p) 
$$
satisfy (\ref{E:X4}). Indeed, by construction we have the inclusion $\psi_{\mathbb{A}_S}(U_{\ell}) \subset \delta_{\mathbb{A}_S}(\Delta(\Pi_\ell))$, and it follows from Lemma \ref{Fact3}(i) that $\psi_{\QQ}(G(\QQ)) \subset \delta_{\QQ}(T(\Q))$. According to Lemma \ref{Fact2}, the product $G(\QQ) \cdot U_{\ell}$ is a normal subgroup of $G(\mathbb{A}_S)$, with the quotient isomorphic to $\psi_{\mathbb{A}_S}(G(\mathbb{A}_S))/\psi_{\mathbb{A}_S}(G(\QQ) \cdot U_{\ell})$. The latter admits an epimorphism onto 
$$
\delta_{\mathbb{A}_S}(T(\mathbb{A}_S))/\delta_{\mathbb{A}_S}(T(\QQ) \cdot \Delta(\Pi_\ell)) \simeq T(\mathbb{A}_S)/ T(\QQ) \cdot \Delta(\Pi_\ell) \cdot T(\mathbb{A}_S)^n
$$
(the isomorphism follows from the exact sequence (\ref{E:WWW4})). Thus, 
$$
[G(\mathbb{A}_S) : G(\QQ) \cdot U_{\ell}] \geq i(\Pi_\ell), 
$$
and then (\ref{E:X4}) follows from Lemma \ref{Fact1}. 
\hfill\(\Box\)

\section{The proof of Theorem \ref{T:B}}\label{CSPapplication}

\subsection{Overview of the congruence subgroup problem.} 

Let $G$ be a linear algebraic group defined over a global field $K$. Given a subset $S \subset V^K$ containing $V^K_{\infty}$, we let $\mathcal{O}(S)$ denote the corresponding ring of $S$-integers. First, we fix a faithful $K$-defined representation $\iota \colon G \hookrightarrow \mathrm{GL}_n$, which enables us to speak unambiguously about the group of $\mathcal{O}(S)$-points $\Gamma_S = G(\mathcal{O}(S))$ and its congruence subgroups $\Gamma_S(\mathfrak{a}) = G(\mathcal{O}(S) , \mathfrak{a})$ for (nonzero) ideals $\mathfrak{a} \subset \mathcal{O}(S)$ -- these are defined respectively as 
$\iota^{-1}(\iota(G(K)) \cap \mathrm{GL}_n(\mathcal{O}(S)))$ and $\iota^{-1}(\iota(G(K)) \cap \mathrm{GL}_n(\mathcal{O}(S) , \mathfrak{a}))$, where $$\mathrm{GL}_n(\mathcal{O}(S) , \mathfrak{a}) = \left\{ X \in \mathrm{GL}_n(\mathcal{O}(S)) \, \vert \, X \equiv I_n(\mathrm{mod}\: \mathfrak{a}) \right\}.$$ Then $\Gamma_S(\mathfrak{a})$ is a finite index normal subgroup of $\Gamma_S$ for any nonzero ideal $\mathfrak{a}$, and the {\it Congruence Subgroup Problem} (CSP) for $\Gamma_S$ in its classical formulation is the question of whether every finite index normal subgroup of $\Gamma_S$ contains a suitable congruence subgroup $\Gamma_S(\mathfrak{a})$. We refer the reader to the surveys \cite{PrRa-Milnor} and \cite{RaCSP} for a discussion of various approaches developed to attack 
the CSP, and of the results obtained using these techniques. Here we only recall the reformulation of the CSP suggested by J.-P.~Serre \cite{SerreSL2}.  

Let $\mathcal{N}^S_a$ (resp., $\mathcal{N}^S_c$) be the family of all finite index normal subgroups $N \subset \Gamma_S$ (resp., of all congruence subgroups $\Gamma_S(\mathfrak{a})$ for nonzero ideals $\mathfrak{a} \subset \mathcal{O}(S)$). Then there are topologies $\tau^S_a$ and $\tau^S_c$ (called the $S$-{\it arithmetic} and $S$-{\it congruence} topologies) on the group $G(K)$ that are compatible with the group structure and have $\mathcal{N}^S_a$ and $\mathcal{N}^S_c$ respectively as fundamental systems of neighborhoods of the identity. Furthermore, $G(K)$ admits completions with respect to the uniform structures associated with $\tau^S_a$ and $\tau^S_c$ that will be denoted $\widehat{G}^S$ and $\overline{G}^S$. Since $\tau^S_a$ is apriori stronger than $\tau^S_c$, there exists a continuous group homomorphism $\pi \colon \widehat{G}^S \to \overline{G}^S$, which is, in fact,   surjective. Its kernel $C^S(G) := \ker \pi$ is a profinite group called the $S$-{\it congruence kernel} (one shows that it is independent of the initial choice of a faithful representation $\iota$). Thus, we have the following exact sequence of locally compact topological groups
\begin{equation}\tag{CSP}\label{E:CSP}
1 \to C^S(G) \longrightarrow \widehat{G}^S \stackrel{\pi}{\longrightarrow} \overline{G}^S \to 1. 
\end{equation}    
It is easy to see that the affirmative answer to the classical congruence subgroup problem for $\Gamma$ amounts to the fact that the topologies $\tau^S_a$ and $\tau^S_c$ coincide, which turns out to be equivalent to $C^S(G) =  1 $. In the general case,  $C^S(G)$ measures the difference between the two topologies. So, Serre proposed to reinterpet the CSP as the problem of computation of $C^S(G)$. Furthermore, he formulated the following conjecture that describes $C^S(G)$ in the main case where $G$ is absolutely almost simple and simply connected and $S$ is finite: {\it if $\mathrm{rk}_S\: G := \sum_{v \in S} \mathrm{rk}_{K_v}\: G > 1$ and $\mathrm{rk}_{K_v}\: G > 0$ for all $v \in S \setminus V^K_{\infty}$ then $C^S(G)$ should be finite, and if $\mathrm{rk}_S\: G = 1$ then it should be infinite.} In fact, if the Margulis-Platonov conjecture concerning the normal subgroup structure of $G(K)$ holds - see below, the finiteness of $C^S(G)$ is equivalent to its {\it centrality} (i.e., to the fact that (\ref{E:CSP}) is a central extension), in which case it is isomorphic to the {\it metaplectic kernel} $M(S , G)$ that was computed in \cite{RaPra} in all cases relevant to the CSP. 

In the current paper, we are interested only in the higher rank part of Serre's conjecture which has been confirmed in a number of cases (see \cite{PrRa-Milnor}, \cite{RaCSP}). Nevertheless, it remains completely open for anisotropic groups of types $\textsf{A}_n$ (both inner and outer forms) and $\textsf{E}_6$, triality forms of type $^{3,6}\textsf{D}_4$, and some other situations. Some evidence for Serre's conjecture in these cases has been generated through the investigation of CSP for {\it infinite} $S$. More precisely, the truth of Serre's conjecture combined with  computations of the metaplectic kernel in \cite{RaPra} would imply that $C^S(G) = 1$ for {\it any} infinite $S$ such that $\mathrm{rk}_{K_v}\: G > 0$ for all $v \in S \setminus V^K_{\infty}$, so efforts have been made to prove this for  
certain infinite $S$. In particular, in \cite{PrRa} this was proved for absolutely almost simple simply connected groups of all types when $S$ contains all but finitely many valuations in a generalized arithmetic progression, with an argument not requiring any case-by-case considerations. Subsequently, Radhika and Raghunathan \cite{RadRag} focused on anisotropic inner forms of type $\textsf{A}_n$ (which are all of the form $\mathrm{SL}_{1 , D}$ for some central division $K$-algebra $D$) and extended the result of \cite{PrRa} to a class of sets $S$ which basically coincides with our tractable sets.  In the current paper we extend their result to absolutely almost simple simply connected algebraic groups and all tractable $S$ that warrant the application of our results on  almost strong approximation in tori  
-- see Theorem \ref{T:B} for the precise formulation. We note that this formulation includes the Margulis-Platonov conjecture (MP) - see \cite[\S 9.1]{PlRa} for a discussion,  which we now recall for the reader's convenience: 

\vskip2mm 

\noindent {\it Let $G$ be an absolutely almost simple simply connected algebraic group over a global field $K$. Set $\mathcal{A} = \{ v \in V^K_f \, \vert \, \mathrm{rk}_{K_v} \: G = 0 \}$, and let $\delta \colon G(K) \to G_{\mathcal{A}} := \prod_{v \in \mathcal{A}} G(K_v)$ be the diagonal map. Then for every noncentral normal subgroup $N \subset G(K)$ there exists an open normal subgroup $W \subset G_{\mathcal{A}}$ such that $N = \delta^{-1}(W)$. In particular, if $\mathcal{A} = \varnothing$ (which is always the case if the type of $G$ is different from $\textsf{A}_n$), the group $G(K)$ does not have any proper noncentral normal subgroups.}

\vskip2mm 

For results on (MP) obtained prior to 1990 -- see \cite[Ch. IX]{PlRa}. Subsequently, (MP) was proved also for all anisotropic inner forms of type $\textsf{A}_n$ -- see \cite{R-MP}, \cite{Segev}, which explains why no assumption on the truth of (MP) is made in \cite{RadRag}.

\subsection{Proof of Theorem \ref{T:B}}\label{sec6} Our argument will be an adaptation of the proof of Theorem B 
in \cite{PrRa}. We will freely use the notations introduced in the statement of Theorem \ref{T:B}. In particular, $G$ will denote an absolutely almost simple simply connected algebraic group defined over a global field $K$, and $S \subset V^K$ a tractable set of valuations that contains a set of the form $V^K_{\infty}\cup(\mc{P}(L/K , \mathcal{C}) \setminus \mc{P}_0)$ in our standard notations. Furthermore, we let $\mathcal{A} = \{ v \in V^K_f \, \vert \, \mathrm{rk}_{K_v}\: G = 0 \}$ denote the (finite) set of nonarchimedean places where $G$ is anisotropic as in the statement of the Margulis-Platonov conjecture above. We will prove Theorem \ref{T:B} by analyzing the exact sequence (\ref{E:CSP}) written for another set of valuations $\tilde{S}$ such that $V^K_{\infty} \subset \tilde{S} \subset S$ and $S \setminus \tilde{S}$ is finite. First, let $\tilde{S}$ be any such set, and let 
\begin{equation}\label{E:CSP1}
1 \to C^{\tilde{S}}(G) \longrightarrow \widehat{G}^{\tilde{S}} \stackrel{\tilde{\pi}}{\longrightarrow} \overline{G}^{\tilde{S}} \to 1
\end{equation}
be the congruence subgroup sequence (\ref{E:CSP}) for the set $\tilde{S}$. It easily follows from the definitions that the $\tilde{S}$-congruence topology $\tau_c^{\tilde{S}}$ on $G(K)$ coincides with the $\tilde{S}$-adelic topology induced by the embedding $G(K) \hookrightarrow G(\mathbb{A}_{K,\tilde{S}})$. Now, since the set $\tilde{S}$ is infinite, it contains  $v$ such that $G$ is $K_v$-isotropic (cf. \cite[Theorem 6.7]{PlRa}), so $G$ has strong approximation with respect to $\tilde{S}$, implying that the completion $\overline{G}^{\tilde{S}}$ can be identified with $G(\mathbb{A}_{K,\tilde{S}})$. Next, it is enough to prove that (\ref{E:CSP1}) is a central extension for {\it some} $\tilde{S}$ as above. Indeed, using the truth of (MP) and the assumption that 
\begin{equation}\label{E:an} 
\mathcal{A} \cap S = \varnothing, 
\end{equation}
one shows that the congruence kernel $C^{\tilde{S}}(G)$ is isomorphic to (the dual of) the metaplectic kernel $M(\tilde{S} , G)$ (cf. \cite[Proposition 2]{RaCSP}). Again, since $\tilde{S}$ is infinite, it contains a nonarchimedean $v$ such that $G$ is $K_v$-isotropic, and then computations of the metaplectic kernel in \cite[Main Theorem, p. 92]{RaPra} imply that $M(\tilde{S} , G) = 1$, hence $C^{\tilde{S}}(G) = 1$. Since $S \supset \tilde{S} \supset V^K_{\infty}$, there is a natural homomorphism $C^{\tilde{S}}(G) \to C^S(G)$, which because of (\ref{E:an}) is surjective (cf. \cite[Lemma 6.2]{Rag}). 
So, $C^S(G) = 1$, as required. 

In order to choose $\tilde{S}$ and establish the centrality of the corresponding sequence (\ref{E:CSP1}), we will use two statements from \cite{PrRa} whose precise formulations with appropriate references are given below. These formulations required some additional notations which we now introduce. Let $v \in V^K$, and let $T$ be a maximal $K_v$-torus of $G$. We let $T^{\mathrm{reg}}$ denote the Zariski-open subset of $T$ consisting of regular elements, and consider the map 
$$
\varphi_{v , T} \colon G(K_v) \times T^{\mathrm{reg}}(K_v)
\to G(K_v), \ \ (g , t) \mapsto gtg^{-1}.$$ It follows from the Implicit Function Theorem (cf. \cite[Ch. III]{PlRa2}) that $\varphi_{v , T}$ is an open map, and in particular $\mathcal{U}(v , T) := \varphi_{v , T}(G(K_v) \times T^{\mathrm{reg}}(K_v))$ is open in $G(K_v)$. We also note that by construction there are natural maps $G(K) \to \widehat{G}^{\tilde{S}}$ and $G(K) \to \overline{G}^{\tilde{S}}$ (in other words, the exact sequence (\ref{E:CSP1}) splits over $G(K)$). In particular, if $t \in G(K)$ is a regular semi-simple element and $T = Z_G(t)$ is the corresponding torus\footnote{We note that the centralizer $Z_G(t)$ is automatically connected since $G$ is simply connected \cite[Theorem 3.9]{SpSt}.}, we can consider $t$ as an element of both $\widehat{G}^{\tilde{S}}$ and $\overline{G}^{\tilde{S}}$, and then 
$$
\tilde{\pi}(Z_{\widehat{G}^{\tilde{S}}}(t)) \, \subset \, Z_{\overline{G}^{\tilde{S}}}(t) = T(\mathbb{A}_{K,\tilde{S}})
$$
(under the identification of $\overline{G}^{\tilde{S}}$ with $G(\mathbb{A}_{K,\tilde{S}})$). We can now formulate a sufficient condition (in fact, a criterion) for the centrality of (\ref{E:CSP1}).
\begin{thm}\label{centrality}
{\rm (\cite[Theorem 3.1(ii)]{PrRa})} Assume that $G(K)$ satisfies (MP) and that $\mathcal{A} \cap \tilde{S} = \varnothing$, and suppose that there is an integer $m>1$, a finite subset $V\subset V^K\setminus \tilde{S}$, and a maximal $K_v$-torus  $T_v$ of $G$ for each $v\in V$ such that for any element $t\in G(K)\cap \prod_{v\in V}\mc{U}(v, T_v)$ (which is automatically regular semi-simple) and the corresponding torus $T = Z_G(t)$, the following inclusion holds:
\begin{equation}\label{E:YYY10}
T(\AF_{K,\tilde{S}})^m \, \subset \,\tilde{\pi}(Z_{\widehat{G}^{\tilde{S}}}(t)).
\end{equation}
Then (\ref{E:CSP1}) is a central extension.
\end{thm}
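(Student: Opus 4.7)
The plan is to establish centrality of (\ref{E:CSP1}) by analyzing the conjugation action of $\widehat{G}^{\tilde{S}}$ on the profinite normal subgroup $C := C^{\tilde{S}}(G)$. Write $\widehat{G} := \widehat{G}^{\tilde{S}}$ and $\overline{G} := \overline{G}^{\tilde{S}}$; since $\tilde{S}$ is infinite, strong approximation identifies $\overline{G}$ with $G(\AF_K(\tilde{S}))$, and the natural map $G(K) \to \widehat{G}$ has dense image. As conjugation $\widehat{G} \times C \to C$ is continuous with values in the profinite $C$, centrality is equivalent to showing that every $g \in G(K)$ centralizes $C$. Set
$$
N := \{\, g \in G(K) \,:\, gcg^{-1} = c \ \text{for all} \ c \in C \,\},
$$
a normal subgroup of $G(K)$. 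Hypothesis (MP) together with $\mathcal{A} \cap \tilde{S} = \varnothing$ forces $N$ either to be central in $G$ or to coincide with $\delta^{-1}(W)$ for some open normal $W \subset G_{\mathcal{A}}$, so it will suffice to produce a collection of regular semisimple, noncentral elements in $N$ that is Zariski-dense and not confined to a proper adelic coset at $\mathcal{A}$.

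The natural candidate is $\Omega := G(K) \cap \prod_{v \in V} \mathcal{U}(v, T_v)$: since each $\mathcal{U}(v, T_v)$ is open and nonempty in $G(K_v)$ and contains only regular semisimple elements, weak approximation in $G$ makes $\Omega$ nonempty and Zariski-dense. The heart of the argument is therefore to show $\Omega \subset N$. Fix $t \in \Omega$ with torus $T = Z_G(t)$ and fix $c \in C$; the goal is $[t, c] = 1$ in $\widehat{G}$. Approximate $c$ by a sequence $\gamma_n \in G(K)$ converging to $c$ in $\widehat{G}$, so that $\tilde{\pi}(\gamma_n) \to 1$ in $\overline{G}$ and the commutators $[t, \gamma_n]$ converge to $[t, c]$ in the closed subgroup $C$. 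Using the hypothesis $T(\AF_K(\tilde{S}))^m \subset \tilde{\pi}(Z_{\widehat{G}}(t))$, one lifts a suitable $m$-power approximation of $\tilde{\pi}(\gamma_n)$ (living inside $T(\AF_K(\tilde{S}))^m$) to an element $\hat{x}_n \in Z_{\widehat{G}}(t)$; since $\hat{x}_n$ commutes with $t$ by construction, a cocycle comparison of $\gamma_n^m$ with $\hat{x}_n$ --- both mapping to the same element of $\overline{G}$ and so differing by an element of $C$ --- combined with passage to the limit yields $[t, c]^m = 1$ in $C$. Finally, varying $t$ across $\Omega$ and exploiting the abundance of distinct maximal tori $Z_G(t)$ eliminates the residual $m$-torsion in the profinite $C$, giving $[t, c] = 1$ and hence $\Omega \subset N$.

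The principal obstacle is the cocycle-and-limit step in the middle: converting the set-theoretic inclusion $T(\AF_K(\tilde{S}))^m \subset \tilde{\pi}(Z_{\widehat{G}}(t))$ for individual $t$ into the genuine vanishing of commutators inside the profinite group $C$. This hinges on a careful analysis of the commutator pairing induced on the abelian profinite subgroup $\overline{T(K)} \subset \widehat{G}$, in the spirit of Raghunathan's original treatment of the congruence subgroup problem, and is where the integer $m > 1$, the openness of the sets $\mathcal{U}(v, T_v)$, and the rigidity imposed by (MP) on the normal subgroup structure of $G(K)$ are all essentially used.
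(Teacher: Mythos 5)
First, a point of comparison: the paper does not prove this statement at all — it is imported verbatim as \cite[Theorem 3.1(ii)]{PrRa} — so there is no internal argument to measure yours against; your attempt has to stand on its own, and it does not yet.

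Your outer frame is reasonable: since $G(K)$ is dense in $\widehat{G}^{\tilde{S}}$ and centralizers are closed, centrality follows once the normal subgroup $N$ of elements of $G(K)$ centralizing $C^{\tilde{S}}(G)$ is all of $G(K)$, and (MP) together with a density argument at $\mathcal{A}$ (weak approximation at the places of $V\cup\mathcal{A}$ is the relevant input here, not Zariski density) reduces this to showing $\Omega := G(K)\cap\prod_{v\in V}\mathcal{U}(v,T_v)\subset N$. But the step that would prove $\Omega\subset N$ — the only place where the hypothesis $T(\AF_K(\tilde{S}))^m\subset\tilde{\pi}(Z_{\widehat{G}^{\tilde{S}}}(t))$ enters — does not work as described. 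You approximate $c\in C^{\tilde{S}}(G)$ by $\gamma_n\in G(K)$ and propose to lift ``a suitable $m$-power approximation of $\tilde{\pi}(\gamma_n)$, living inside $T(\AF_K(\tilde{S}))^m$,'' to $Z_{\widehat{G}^{\tilde{S}}}(t)$. However, $\gamma_n$ is an arbitrary rational element close to $c$; its image $\tilde{\pi}(\gamma_n)$ is merely an element of $G(\AF_K(\tilde{S}))$ near the identity and bears no relation to the particular torus $T=Z_G(t)$, so neither it nor $\tilde{\pi}(\gamma_n)^m$ lies in $T(\AF_K(\tilde{S}))^m$; the hypothesis therefore furnishes no element $\hat{x}_n\in Z_{\widehat{G}^{\tilde{S}}}(t)$ to compare with $\gamma_n^m$, and the ``cocycle comparison'' is vacuous. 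Moreover, even if one could extract $[t,c]^m=1$, that is far from $[t,c]=1$: the congruence kernel one expects (and, when centrality holds, obtains) is a finite — hence torsion — group isomorphic to the metaplectic kernel, so ``residual $m$-torsion'' cannot be eliminated by ``varying $t$ across $\Omega$'' without a concrete mechanism, and none is offered. You in effect concede this by deferring the ``cocycle-and-limit step'' to ``the spirit of Raghunathan's original treatment''; but that step is precisely the content of the theorem, and it is the substance of the multi-page argument of Prasad and Rapinchuk that this paper quotes. As it stands, your proposal is a plausible reduction plus a placeholder where the proof should be.
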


The verification of condition (\ref{E:YYY10}) is based on the observation that the image $\tilde{\pi}(Z_{\widehat{G}^{\tilde{S}}}(t))$ of the centralizer contains the closure  $\overline{T(K)}^{(\tilde{S})}$ (see below), which enables us to apply our results on almost strong approximation in tori. In fact, we will use not only the finiteness of the index  $[T(\mathbb{A}_{K,\tilde{S}}) : \overline{T(K)}^{(\tilde{S})}]$ but actually a more precise fact that it divides a number that depends only on the rank of $G$ and the degree of the extension $L/K$ involved in the description of the generalized arithmetic progression, allowing us to choose one number that works for {\it all} relevant maximal tori of $G$. First, we need to choose $\tilde{S}$ appropriately for which we use the following.


\begin{lemma}\label{disjointextns}
{\rm (\cite[Lemma 5.5]{PrRa})} Let $G$ be an absolutely almost simple simply connected algebraic group defined over a global field $K$, and let $M$ be the minimal Galois extension of $K$ over which $G$ is an inner form of a $K$-split group. Furthermore, suppose we are given a finite subset $\mathbb{S}\subset V^K$ and a finite Galois extension $L/K$. Then there exists a finite subset $V\subset V^K\setminus\mathbb{S}$ and maximal $K_v$-tori $T_v$ of $G$ for each $v\in V$ such that for any $t \in G(K) \cap \prod_{v \in V} \mathcal{U}(v , T_v)$, the minimal 
splitting field $P_T$ of the corresponding torus $T = Z_G(t)$ satisfies  
$$P_T\cap L=M\cap L.$$
\end{lemma}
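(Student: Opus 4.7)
The plan relies on two classical ingredients: Chebotarev's Density Theorem applied to $ML/K$, and the elementary fact that any $t \in G(K)\cap \mathcal{U}(v,T_v)$ yields a maximal $K$-torus $T = Z_G(t)$ whose base change $T\otimes_K K_v$ is $G(K_v)$-conjugate, hence $K_v$-isomorphic, to $T_v$.

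The starting observation is that $M\subseteq P_T$ holds automatically for every maximal $K$-torus $T$ of $G$. Indeed, the $*$-action of $\mathrm{Gal}(\bar{K}/K)$ on the Dynkin diagram of $G$ (which by definition of $M$ factors through $\mathrm{Gal}(M/K)$) is induced by the Galois action on $X(T)/W$, and becomes trivial once $T$ splits; hence $\mathrm{Gal}(\bar{K}/P_T)\subseteq \mathrm{Gal}(\bar{K}/M)$, giving $M\subseteq P_T$. Thus $M\cap L\subseteq P_T\cap L$ comes for free, and the task is to arrange the reverse inclusion. By Galois theory, $P_T\cap L\subseteq M\cap L$ is equivalent to $\mathrm{Gal}(L/M\cap L)\subseteq \mathrm{Gal}(L/P_T\cap L)$, and the latter coincides with the image of the restriction map $\mathrm{Gal}(P_T L/P_T)\to \mathrm{Gal}(L/K)$. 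So it suffices to prove that every nontrivial $\tau\in \mathrm{Gal}(L/M\cap L)$ lies in this image for every $T = Z_G(t)$ coming from the prescribed $t$.

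Under the fiber-product identification $\mathrm{Gal}(ML/K) = \mathrm{Gal}(M/K)\times_{\mathrm{Gal}(M\cap L/K)}\mathrm{Gal}(L/K)$, the hypothesis $\tau|_{M\cap L} = \mathrm{id}$ makes $(\mathrm{id}_M,\tau)$ a genuine element of $\mathrm{Gal}(ML/K)$. Chebotarev's Density Theorem, applied to $ML/K$, produces infinitely many primes $v\in V^K_f\setminus \mathbb{S}$, unramified in $ML$, whose Frobenius class in $\mathrm{Gal}(ML/K)$ contains $(\mathrm{id}_M,\tau)$. Since $G/K_v$ is quasi-split for all but finitely many $v$, and since a simply connected quasi-split group with trivial $*$-action---automatic once $v$ splits completely in $M$---is itself $K_v$-split, we may further insist that $G/K_{v(\tau)}$ be $K_{v(\tau)}$-split. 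Pick one such prime $v(\tau)$ and take $T_{v(\tau)}$ to be a $K_{v(\tau)}$-split maximal torus of $G$; setting $V = \{v(\tau)\,:\,\tau\in \mathrm{Gal}(L/M\cap L)\setminus\{\mathrm{id}\}\}$ yields the required finite subset of $V^K\setminus\mathbb{S}$.

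For any $t\in G(K)\cap \prod_{v\in V}\mathcal{U}(v,T_v)$ with $T = Z_G(t)$, the base change $T\otimes_K K_{v(\tau)}$ is $K_{v(\tau)}$-isomorphic to $T_{v(\tau)}$, hence $K_{v(\tau)}$-split. Thus the Frobenius at $v(\tau)$ acts trivially on $X(T)$ and is trivial in $\mathrm{Gal}(P_T/K)$, so $v(\tau)$ splits completely in $P_T$. A suitable Frobenius at $v(\tau)$ in $\mathrm{Gal}(P_T L/K)$ is then exactly $(\mathrm{id}_{P_T},\tau)$, witnessing $\tau\in \mathrm{Gal}(L/P_T\cap L)$, as required. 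The main technical obstacle is securing both the prescribed Chebotarev Frobenius \emph{and} local splitness of $G$ at a single prime; this is handled by the fact that the $*$-action becoming trivial (which occurs precisely when $v$ splits completely in $M$) collapses the $K_v$-quasi-split form of $G$ onto the $K_v$-split form, away from the finite exceptional set of $v$ where $G$ fails to be $K_v$-quasi-split.
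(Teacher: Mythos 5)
Your proof is correct: the inclusion $M\subseteq P_T$ via the $*$-action, Chebotarev in $ML/K$ applied to the classes of $(\mathrm{id}_M,\tau)$ for nontrivial $\tau\in\mathrm{Gal}(L/M\cap L)$, the resulting $K_{v(\tau)}$-splitness of $G$ (hence the availability of split tori $T_{v(\tau)}$) at those places, and the complete splitting of $v(\tau)$ in $P_T$ do yield $P_T\cap L\subseteq M\cap L$; the one elided point — that a Frobenius over $v(\tau)$ in $\mathrm{Gal}(P_TL/K)$ a priori restricts to $L$ only to a conjugate of $\tau$ — is harmless, since every element of that Frobenius class is trivial on $P_T$ while the class maps onto the full conjugacy class of $\tau$ (alternatively, $P_T\cap L$ is Galois over $K$, so $\mathrm{Gal}(L/P_T\cap L)$ is normal in $\mathrm{Gal}(L/K)$). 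The paper itself quotes this lemma from \cite[Lemma 5.5]{PrRa} without reproducing a proof, and your argument is essentially the one given in that source.
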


We are now in a position to complete the proof of Theorem \ref{T:B}. Let $L/K$ be the Galois extension involved in the description of the generalized arithmetic progression in the statement of the theorem, and let $M/K$ be the minimal Galois extension over which $G$ becomes an inner form of a $K$-split group. Applying Lemma \ref{disjointextns} with $\mathbb{S} = \mathcal{A} \cup V^K_{\infty}$, we find a finite subset $V \subset V^K \setminus \mathbb{S}$ and maximal $K_v$-tori $T_v$ of $G$ for $v \in V$ so that for any $t \in G(K) \cap U$, where $U =  \prod_{v \in V} \mathcal{U}(v , T_v)$, and the torus $T = Z_G(t)$ we have 
\begin{equation}\label{E:YYY20}
P \cap L = M \cap L, 
\end{equation}
where $P$ is the minimal splitting field of $T$. 
Set $\tilde{S} = S \setminus V$ (which is obviously tractable), and let $m = \tilde{C}(d , n)$ (the constant from Theorem \ref{T:1}) with $d = \mathrm{rk}\: G$ and $n = [L : K]$.  We will now show that the assumptions of Theorem \ref{centrality} hold true for this $\tilde{S}$, so the theorem will yield the centrality of (\ref{E:CSP1}), completing the argument. Let $t \in G(K) \cap U$ and $T = Z_G(t)$. Then (\ref{E:YYY20}) holds for the splitting field $P$ of $T$, and therefore $\sigma \vert (P \cap L) = \mathrm{id}_{P \cap L}$ for some $\sigma \in \mathcal{C}$. Applying Theorem \ref{T:1}, we conclude that the index $[T(\mathbb{A}_{K,\tilde{S}}) : \overline{T(K)}^{(\tilde{S})}]$ divides $m$, and consequently, 
\begin{equation}\label{E:YYY60}
T(\mathbb{A}_{K,\tilde{S}})^m \subset \overline{T(K)}^{(\tilde{S})}. 
\end{equation}
On the other hand, since $C^{\tilde{S}}(G)$ is compact, the map $\tilde{\pi}$ is proper, so the image $\tilde{\pi}(Z_{\widehat{G}^{\tilde{S}}}(t))$ is closed in $\overline{G}^{\tilde{S}}$. In view of the obvious inclusion $Z_{\widehat{G}^{\tilde{S}}}(t) \supset T(K)$, we get the inclusion $\overline{T(K)}^{(\tilde{S})} \subset \tilde{\pi}(Z_{\widehat{G}^{\tilde{S}}}(t))$, which in conjunction with  
(\ref{E:YYY60}) verifies (\ref{E:YYY10}) and completes the argument.

\begin{remark}
For the reader's convenience (and following the referee's suggestion), we would like to summarize the idea of the (rather technical) proof of Theorem \ref{T:B} and clarify the role of almost strong approximation in the argument. Applying Theorem \ref{centrality}, we actually prove the following: {\it Let $G$ be an absolutely almost simple simply connected algebraic group defined over a global field $K$, and let $S \subset V^K$ be an infinite subset that contains $V^K_{\infty}$ but does not contain any nonarchimedean $v$ such that $G$ is $K_v$-anisotropic. Assume that (MP) holds for $G(K)$. If there exists an integer $N > 0$ such that for a ``generic'' regular semisimple element $t \in G(K)$ the centralizer $T = Z_G(t)$ has almost strong approximation relative to $S$ with the index $[T(\mathbb{A}_{K,S}) : \overline{T(K)}^{(S)}]$ dividing $N$, then $C^S(G) = 1$.} The existence of such an $N$ in our situation follows from Theorem \ref{T:1} where it is shown that  $[T(\mathbb{A}_{K,S}) : \overline{T(K)}^{(S)}]$ divides a constant $\tilde{C}(d , n)$ that depends only on $d = \dim T$ and the degree $n$ of the extension $L/K$ used to define the generalized arithmetic progression. 
    
\end{remark}

\bibliographystyle{amsplain}

\end{document}